\title{Symplectic structures on right-angled Artin groups: between the mapping class group and the symplectic group 
}
\author{Matthew B. Day
}
\date{July 30, 2008}
\theoremstyle{plain} \newtheorem{theorem}{Theorem}[section]
\theoremstyle{plain} \newtheorem{proposition}[theorem]{Proposition}
\theoremstyle{plain} \newtheorem{lemma}[theorem]{Lemma}
\theoremstyle{plain} \newtheorem{sublemma}[theorem]{Sublemma}
\theoremstyle{plain} \newtheorem{corollary}[theorem]{Corollary}
\theoremstyle{plain} \newtheorem{claim}[theorem]{Claim}
\theoremstyle{plain} 
\theoremstyle{plain} \newtheorem{problem}[theorem]{Open Problem}
\theoremstyle{plain} \newtheorem{conjecture}[theorem]{Conjecture}
\theoremstyle{plain} \newtheorem{question}[theorem]{Question}
\theoremstyle{remark} \newtheorem{example}[theorem]{Example}
\theoremstyle{definition} \newtheorem{remark}[theorem]{Remark}
\theoremstyle{definition} \newtheorem{definition}[theorem]{Definition}
\theoremstyle{definition} \newtheorem{theorem-definition}[theorem]{Theorem-Definition}
\theoremstyle{definition} 
\theoremstyle{plain} \newtheorem{maintheorem}{Theorem}
\theoremstyle{plain} 
\theoremstyle{plain} 
\numberwithin{equation}{subsection}
\newcommand\Aut{\mathrm{Aut}\,}
\newcommand\IAut{\mathrm{IAut}\,}
\newcommand\End{\mathrm{End}\,}
\newcommand\Out{\mathrm{Out}\,}
\newcommand\Ker{\mbox{ker}}
\newcommand\img{\mathrm{Im}\,}
\newcommand\Mod{\mathrm{Mod}}
\newcommand\Sp{\mathrm{Sp}}
\newcommand\GL{\mathrm{GL}}
\newcommand\C{\mbox{$\mathbb{C}$}}
\newcommand\Z{\mbox{$\mathbb{Z}$}}
\newcommand\into\hookrightarrow
\newcommand\isomarrow{\stackrel{\cong}{\longrightarrow}}
\def\co{\colon\thinspace}
\newcommand\badexample{
\setlength{\unitlength}{0.4 ex}
\begin{picture}(110,50)
\put(0,20){\circle*{2}}
\put(10,20){\circle*{2}}
\put(20,15){\circle*{2}}
\put(30,20){\circle*{2}}
\put(40,20){\circle*{2}}
\put(50,20){\circle*{2}}
\put(60,20){\circle*{2}}
\put(70,20){\circle*{2}}
\put(80,15){\circle*{2}}
\put(90,20){\circle*{2}}
\put(100,20){\circle*{2}}
\put(40,0){\circle*{2}}
\put(50,0){\circle*{2}}
\put(50,40){\circle*{2}}
\put(0,20){\line(1,0){10}}
\put(30,20){\line(1,0){40}}
\put(90,20){\line(1,0){10}}
\put(10,20){\line(2,-1){10}}
\put(20,15){\line(2,1){10}}
\put(70,20){\line(2,-1){10}}
\put(80,15){\line(2,1){10}}
\put(0,20){\line(5,2){50}}
\put(0,20){\line(2,-1){40}}
\put(40,0){\line(1,0){10}}
\put(40,20){\line(1,2){10}}
\put(40,20){\line(1,-2){10}}
\put(50,0){\line(1,2){10}}
\put(50,0){\line(5,2){50}}
\put(50,40){\line(5,-2){50}}
\put(50,40){\line(1,-2){10}}
\put(20,15){\line(1,0){60}}
\put(48,23){$a_1$}
\put(98,23){$b_1$}
\put(68,23){$b_2$}
\put(12,20){$a_2$}
\put(55,42){$y$}
\put(55,5){$x$}
\end{picture}
}
\newcommand\adj[2]{\mathrm e( #1,#2)}
\newcommand\AAG{\Aut A_\Gamma}
\newcommand\AAGo{\Aut^0 A_\Gamma}
\newcommand\OAG{\Out A_\Gamma}
\newcommand\supp{\mathrm{supp\,}}
\newcommand\lk{\mathrm{lk}}
\newcommand\st{\mathrm{st}}
\newcommand\pg[1]{\mathrm v(#1)}
\newcommand\lkl[1]{\lk_L(#1)}
\newcommand\stl[1]{\st_L(#1)}
\newcommand\OLS{\Omega_\ell\cup\Omega_s}
\begin{document}
\maketitle
\begin{abstract}
We define a family of groups that include the mapping class group of a genus $g$ surface with one boundary component and the integral symplectic group $\mathrm{Sp}(2g,\mathbb{Z})$. 
We then prove that these groups are finitely generated.
These groups, which we call \emph{mapping class groups over graphs}, are indexed over labeled simplicial graphs with $2g$ vertices.
The mapping class group over the graph $\Gamma$ is defined to be a subgroup of the automorphism group of the right-angled Artin group $A_\Gamma$ of $\Gamma$.
We also prove that the kernel of $\mathrm{Aut}\,A_\Gamma\to \mathrm{Aut}\,H_1(A_\Gamma)$ is finitely generated, generalizing a theorem of Magnus.
\end{abstract}

\section{Introduction}
\subsection{Background}
Let $\Gamma$ be a graph on $n$ vertices, with vertex set $X$ and adjacency relation denoted by $\adj{-}{-}$.
Let $A_\Gamma$ denote the \emph{right-angled Artin group of $\Gamma$}, defined by
\[A_\Gamma:= \langle X |R_\Gamma \rangle\]
where the relations are $R_\Gamma=\{[x,y]|\text{ $x,y\in X$ and $\adj{x}{y}$}\}$.
As we vary $\Gamma$, the group $A_\Gamma$ interpolates between the free group $F_n$ (if $\Gamma$ is edgeless) and the free abelian group $\Z^n$ (if $\Gamma$ is complete).
Similarly, as we vary $\Gamma$, the automorphism group $\AAG$ interpolates between $\Aut F_n$ and the integral general linear group $\GL(n,\Z)$.

Both mapping class groups and symplectic groups can be expressed as stabilizer subgroups of automorphism groups.
Consider the free group $F_{2g}$ with free generators $a_1,\ldots,a_g,b_1,\ldots,b_g$.
The stabilizer in $\Aut F_{2g}$ of the element $[a_1,b_1]\cdots[a_g,b_g]$ is a subgroup isomorphic to the mapping class group of a genus $g$ surface with a single boundary component.
This is a version, due to Zieschang~\cite{zieschang}, of the classical Dehn--Nielsen--Baer Theorem (see~\cite{fm}, Chapter 3).
At the other extreme, the integral symplectic group $\Sp(2g,\Z)$ is the stabilizer in $\GL(2g,\Z)$ of the standard symplectic form on $\Z^{2g}$.
In this paper, we define a structure on a right-angled Artin group that interpolates between a surface relator on a free group and a symplectic form, so that the stabilizer in $\AAG$ of this structure interpolates between mapping class groups and integral symplectic groups.
This answers a question due to Benson Farb. 

This paper is a continuation of the author's previous paper~\cite{fpraag}, and we will freely use notation, terminology, and results from that paper.

\subsection{Symplectic structures on right-angled Artin groups}
Let $A'_\Gamma=[A_\Gamma,A_\Gamma]$ be the commutator subgroup of $A_\Gamma$.
Let $H_\Gamma=A_\Gamma/A'_\Gamma$ be the abelianization of $A_\Gamma$, which is the free abelian group $\langle\{[x]|x\in X\}\rangle$.
As usual, the alternating square $\Lambda^2 H_\Gamma$ of $H_\Gamma$ is the free abelian group generated by the wedge products $[x]\wedge[y]$ for $x\neq y\in X$ (where $[x]\wedge[y]=-[y]\wedge[x]$).
The symbol $[x]\wedge[y]$ is bilinear, so the action of $\AAG$ on $H_\Gamma$ induces a diagonal action on $\Lambda^2 H_\Gamma$.

A \emph{standard alternating form} is an element of $\Lambda^2 H_\Gamma$ of the form $[a_1]\wedge[b_1]+\cdots+[a_k]\wedge[b_k]$, where $a_i, b_i\in X^{\pm1}$ and the $\{a_i,b_i\}_i$ are pairwise distinct and not equal to each other's inverses.
A \emph{surface relator} is an element (possibly trivial) of $A'_\Gamma$ of the form $[a_1,b_1]\cdots[a_k,b_k]$, where $a_i, b_i\in X^{\pm1}$, and the $\{a_i,b_i\}$ are pairwise distinct and not equal to each other's inverses.

\begin{definition}\label{de:sympstruct}
Suppose $\Gamma$ has $2g$ vertices.
A pair $(w,Q)\in A_\Gamma\times (\Lambda^2 H_\Gamma)$ is  
a \emph{symplectic structure} for the right-angled Artin group $A_\Gamma$ if 
there is some labeling of $X^{\pm1}$ as $a_1^{\pm1},\ldots,a_g^{\pm1},b_1^{\pm1},\ldots,b_g^{\pm1}$ and some $k$ with $0\leq k\leq g$ 
satisfying the following conditions:
\begin{itemize}
\item 
for each $i$ with $1\leq i\leq k$, we have $[a_i,b_i]\neq 1$,
\item
for each $i$ with $k+1\leq i\leq g$, we have $[a_i,b_i]=1$,
\item
$w$ is the surface relator
\[w=[a_1,b_1]\cdots[a_k,b_k]\]
\item 
and $Q$ is the standard alternating form:
\[Q=\sum_{i=k+1}^g[a_i]\wedge[b_i]\]
\end{itemize}

The \emph{mapping class group over $\Gamma$} with respect to a symplectic structure $(w,Q)$, written $\Mod(\Gamma,w,Q)$, is  
the intersection of the stabilizers of $w$ and $Q$ in $\AAG$:
\[\Mod(\Gamma,w,Q):=(\AAG)_{(w,Q)}=(\AAG)_w\cap(\AAG)_Q\]
\end{definition}

\begin{remark}\label{rem:spform}
Consider the subgroups $V$ and $V^\perp$ defined by:
\begin{align*} 
V&=\langle\{[x]\wedge[y]|\text{ $x,y\in X$ and $[x,y]=1$}\}\rangle\\
\hspace{2pt}
\tag*{\text{and}} 
V^\perp&=\langle\{[x]\wedge[y]|\text{ $x,y\in X$ and $[x,y]\neq 1$}\}\rangle
\end{align*}
As an $\AAG$--module, $\Lambda^2 H_\Gamma$ decomposes as
$\Lambda^2 H_\Gamma = V\oplus V^\perp$.
This phenomenon is somewhat peculiar to right-angled Artin groups.

Let $A^{(2)}_\Gamma=[A'_\Gamma,A_\Gamma]$.
The map $\Lambda^2 H_\Gamma\to A'_\Gamma/ A^{(2)}_\Gamma$ given by $[a]\wedge[b]\mapsto [a,b]\cdot A^{(2)}_\Gamma$ for $a,b\in X$ is a surjective homomorphism (it follows from the Witt-Hall identities that this map is well defined, see Serre~\cite{serlie}, Chapter II, Proposition 1.1).
Then $V$ is clearly the kernel of this map.
This also tells us that $A'_\Gamma/A^{(2)}_\Gamma\cong V^\perp$.
The composition of this isomorphism with the inclusion $V^\perp\into\Lambda^2 H_\Gamma$ gives a map 
$f\co A'_\Gamma\to \Lambda ^2 H_\Gamma$.
Note that if $[a,b]\neq 1$ for $a,b\in X$, then $f([a,b])=[a]\wedge[b]$.
This map $f$ is not usually $\AAG$--equivariant because $V^\perp$ is not usually invariant under the action of $\AAG$.

If $(w,Q)$ is a symplectic structure on $A_\Gamma$, then $Q\in V$, $f(w)\in V^\perp$ and $Q+f(w)$ is a standard symplectic form on $H_\Gamma$.
It turns out that $\AAG$ does not usually preserve $Q+f(w)$.
\end{remark}

\begin{example}
Suppose $\Gamma$ is the edgeless graph on $2g$ vertices.
Then $(w,Q)$ is a symplectic structure if and only if $Q=0$ and $w$ is a surface relator of length $4g$.
In this case $\Mod(\Gamma, w, 0)\cong\Mod_{g,1}$.
\end{example}

\begin{example}
At the other extreme, if $\Gamma$ is the complete graph on $2g$ vertices, then $(w,Q)$ is a symplectic structure if and only if $w=1$ and $Q\in V=\Lambda^2 H_\Gamma$ is a symplectic form.
In this case $\Mod(\Gamma, 1, Q)\cong\Sp(2g,\Z)$.
\end{example}

The methods of this paper make it possible to explore more difficult examples such as the following, but for brevity we give the following examples without proving the assertions we make about them.
We develop an example more thoroughly in Section~\ref{ss:counterexample} below.

\begin{example}\label{ex:join}
Suppose $\Gamma_1$ is the complete graph on $2k_1$ vertices, $\Gamma_2$ is the edgeless graph on $2k_2$ vertices, and $\Gamma$ is the graph-theoretic join of $\Gamma_1$ and $\Gamma_2$.
Then a symplectic structure on each of $A_{\Gamma_1}$ and $A_{\Gamma_2}$ will induce a symplectic structure $(w,Q)$ on $A_\Gamma$.
In this case, we have: 
\[\Mod(\Gamma,w,Q)\cong ((\Sp_{2k_1}(\Z)\times \Mod_{k_2,1})\ltimes \prod_{x\in \Gamma_2}A_{\Gamma_1})\times \Z\]
The inclusions of $\Sp_{2k_1}(\Z)$ and $\Mod_{k_2,1}$ into $\Mod(\Gamma,w,Q)$ are the obvious ones, the copy of $\Z$ is given by conjugation by $w$, and the copies of $A_{\Gamma_1}$ are given by $x\mapsto xu$ for $x$ a generator in $A_{\Gamma_2}$ and $u\in A_{\Gamma_1}$.
\end{example}

\begin{example}\label{ex:disju}
If instead of the join, we take $\Gamma$ to be the disjoint union of the graphs $\Gamma_1$ and $\Gamma_2$ from Example~\ref{ex:join}, then we still get an induced symplectic structure $(w,Q)$, but a different group $\Mod(\Gamma,w,Q)$.
Any automorphism in $\AAG$ that conjugates all of the generators of $A_{\Gamma_1}$ by one of the generators of $A_{\Gamma_2}$ (and fixes the generators of $A_{\Gamma_2}$) preserves the symplectic structure $(w,Q)$.
Although it is not obvious, it turns out that:
\[\Mod(\Gamma,w,Q)\cong\Sp_{2k_1}(\Z)\times(\Mod_{k_2,1}\ltimes A_{\Gamma_2})\]
\end{example}

In general, the groups $\Mod(\Gamma,w,Q)$ and $\Mod(\Gamma',w',Q')$ tend to look very different for slightly different graphs $\Gamma$ and $\Gamma'$.
Even if $\Gamma=\Gamma'$, if $(w,Q)$ is different from $(w',Q')$ the resulting groups may be different.
\begin{example}
If $\Gamma$ is the disjoint union of $\Gamma_1$ and $\Gamma_2$ as in Example~\ref{ex:disju}, but with a single edge added between a vertex of $\Gamma_1$ and a vertex of $\Gamma_2$, 
then the respective inclusions of $\Gamma_1$ and $\Gamma_2$ into $\Gamma$ 
do not induce maps of $\Aut A_{\Gamma_1}$ or $\Aut A_{\Gamma_2}$ to $\AAG$.
Then neither $\Mod_{k_2,1}$ nor $\Sp(2k_1,\Z)$ include in $\Mod(\Gamma,w,Q)$ in the obvious way.
However, $\Mod_{k_2-1,1}$ and $\Sp(2k_1-2,\Z)$ both include into $\Mod(\Gamma,w,Q)$, so the group is nontrivial.
It takes some work to get a more complete picture of this group.
\end{example}

\subsection{Statement of Results}
\begin{maintheorem}\label{mt:finitelygenerated}
For any graph $\Gamma$ with an even number of vertices and any symplectic structure $(w,Q)$ on $A_\Gamma$, the group $\Mod(\Gamma,w,Q)$ is finitely generated.
\end{maintheorem}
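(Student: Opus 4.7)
The plan is to extend the peak-reduction methods of \cite{fpraag} from stabilizers of single elements $u\in A_\Gamma$ to stabilizers of pairs $(w,Q)\in A_\Gamma\times\Lambda^2 H_\Gamma$.  The diagonal action of $\AAG$ on this product makes the stabilizer of $(w,Q)$ equal to the intersection $(\AAG)_w\cap(\AAG)_Q=\Mod(\Gamma,w,Q)$, and it is finite generation of this stabilizer that must be shown.  Since intersections of finitely generated subgroups of $\AAG$ are not in general finitely generated, one cannot simply combine a finite generating set of $(\AAG)_w$ with a finite generating set of $(\AAG)_Q$; a joint approach is needed.

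First I would fix the finite generating set of $\AAG$ that is used throughout \cite{fpraag} (inversions, graph automorphisms, partial conjugations, and transvections) and equip $A_\Gamma\times\Lambda^2 H_\Gamma$ with a complexity function.  A natural choice is $c(w,Q)=|w|+\|Q\|_1$, where $|w|$ is the length of $w$ in the generators $X^{\pm1}$ and $\|Q\|_1$ is the $\ell^1$--norm of the coordinates of $Q$ in the basis $\{[x]\wedge[y]:x,y\in X,\ x\neq y\}$.  Each Whitehead-type generator acts on $Q$ through its image in $\Aut H_\Gamma$, and these images are signed permutations or elementary transvections of the basis of $H_\Gamma$, so their effect on $\|Q\|_1$ is easy to track; their effect on $|w|$ is controlled by the combinatorics already developed in \cite{fpraag}.

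The central step is a joint peak reduction theorem: when $(w,Q)$ has minimal complexity in its $\AAG$--orbit and $\varphi\in\Mod(\Gamma,w,Q)$ is written as a product $\sigma_1\cdots\sigma_n$ of generators, one can rearrange the factorization so that every partial product $\sigma_k\cdots\sigma_1(w,Q)$ already has complexity equal to $c(w,Q)$.  I expect this to be the main obstacle and the heart of the proof, because the rearrangement moves used in the peak reduction of \cite{fpraag} (designed to eliminate peaks in $|w|$) must now be checked to simultaneously eliminate peaks in $\|Q\|_1$.  Since the action on $Q$ is linear and each generator acts as a very restricted matrix, this reduces to a finite, if intricate, case analysis grafted onto the existing machinery, with an especially delicate subcase when partial conjugations interact with transvections whose images in $\Aut H_\Gamma$ do not commute.

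Granting the joint peak reduction statement, finite generation of $\Mod(\Gamma,w,Q)$ follows by a standard Whitehead-style argument.  There are only finitely many pairs $(w',Q')$ with $c(w',Q')\leq c(w,Q)$, and $\Mod(\Gamma,w,Q)$ is then generated by the finite set of Whitehead generators $\sigma$ satisfying $\sigma(w',Q')=(w',Q')$ for some minimal $(w',Q')$ in the orbit, together with finitely many conjugating words that move between these minimal pairs.  All remaining work is in verifying the joint peak reduction theorem, which is where essentially all of the technical content of the argument should lie.
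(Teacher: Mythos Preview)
Your plan has a genuine gap at its central step. Peak reduction in $\AAG$ does \emph{not} hold in general for the full Whitehead generating set: this is one of the main results of \cite{fpraag}, which only establishes peak reduction for the subgroup $\langle\Omega_\ell\rangle$ of long-range automorphisms. So you cannot simply ``graft a finite case analysis onto the existing machinery'' for the $|w|$ part, let alone for $|w|+\|Q\|_1$. In fact the paper spends an entire section (Lemma~\ref{le:specialsrform}, Lemma~\ref{le:prone}, Theorem~\ref{th:prw0}) proving that peak reduction with respect to $[w_0]$ \emph{does} hold for arbitrary elements of $\AAGo$, and this relies heavily on the special structure of surface relators; it is not a routine extension. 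Your proposal treats this as already available.

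The second problem is the $\|Q\|_1$ term. Even for the complete graph, where $\Mod(\Gamma,1,Q)=\Sp(2g,\Z)$, there is no Whitehead-type peak reduction for symplectic transvections with respect to an $\ell^1$-norm on $\Lambda^2 H_\Gamma$; finite generation of $\Sp(2g,\Z)$ comes instead from an integral row-reduction (Euclidean) algorithm, which has a different logical shape. The paper's proof reflects this: it does \emph{not} attempt joint peak reduction. Instead it (i) peak-reduces only with respect to $[w_0]$ to get a finite generating set $S_e\cup S_k\cup S_i$ for $(\AAGo)_{[w_0]}$, (ii) proves a decomposition (Proposition~\ref{pr:1ofeach}) writing every element of $(\AAGo)_{[w_0]}$ as a product of one element of $\langle S_e\cup S_k\rangle$, which automatically fixes $Q$, and one element of $\langle S_i\rangle$, which fixes $w_0$ and acts linearly on $H_\Gamma$, and (iii) handles the $\langle S_i\rangle$ factor by a separate domination-constrained symplectic row reduction (Theorem~\ref{th:domspred}) together with the Magnus-type kernel result (Proposition~\ref{pr:KZisker}). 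The separation of the two algorithms, rather than their fusion, is the key idea you are missing.
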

This is strong evidence that our definition for $\Mod(\Gamma,w,Q)$ from Definition~\ref{de:sympstruct} is a good one.
We also considered an alternate definition for a symplectic structure: a pair $(w,Q)$ where $w$ is a surface relator and $Q\in \Lambda^2 H_\Gamma$ is a standard symplectic form, such that $w$ and $Q$ project to the same element in $A'_\Gamma/A^{(2)}_\Gamma$.
As we show in Section~\ref{ss:counterexample}, there is an example of a graph $\Gamma$ where the subgroup of $\AAG$ fixing both a surface relator and a compatible symplectic form on $H_\Gamma$ is not finitely generated (but of course, Theorem~\ref{mt:finitelygenerated} still holds in this case).

The proof of Theorem~\ref{mt:finitelygenerated} proves both the finite generation of mapping class groups and the integral symplectic groups in special cases.
We did not find a single argument that proved both things in the same way, but rather found a single algorithm that reduces to two previously known algorithms in each extreme case.
These extremal algorithms are integral symplectic row reduction and the peak reduction algorithm (Whitehead's theorem) for free groups.

We also obtain the following statement, which is of interest in itself, as a corollary to a proposition used in the proof of Theorem~\ref{mt:finitelygenerated}.
In the case where $A_\Gamma$ is a free group, this theorem restricts to the 1934 theorem of Magnus that $\Ker(\Aut F_n\to\GL(n,\Z))$ is finitely generated (see Magnus--Karrass--Solitar~\cite{mks}, Section~3.5, Theorem~N4, or Magnus~\cite{maggitter}).
Let $\IAut A_\Gamma$ denote the kernel $\Ker(\AAG\to\Aut H_\Gamma)$.
\begin{maintheorem}\label{mt:magnusthm}
The group $\IAut A_\Gamma$ is finitely generated.
\end{maintheorem}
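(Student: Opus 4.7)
The plan is to deduce Theorem~\ref{mt:magnusthm} from the peak-reduction machinery used for Theorem~\ref{mt:finitelygenerated}, in direct analogy with Magnus's original treatment of the free case. The relevant exact sequence is
\[1 \longrightarrow \IAut A_\Gamma \longrightarrow \AAG \stackrel{\pi}{\longrightarrow} \img(\pi) \longrightarrow 1,\]
where $\pi$ is the induced action on $H_\Gamma$. From \cite{fpraag}, $\AAG$ has a finite presentation with a Laurence-type finite generating set consisting of inversions, graph symmetries, transvections, and partial conjugations. The partial conjugations already lie in $\IAut A_\Gamma$, while the other Laurence generators typically have nontrivial image under $\pi$.

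First I would propose as candidate generators for $\IAut A_\Gamma$ the partial conjugations from Laurence's set, together with a finite family of commutator transvections of the form $x\mapsto x\cdot [y,z]$, where $y$ and $z$ are vertices admissible as transvection partners for $x$. Each such element clearly lies in $\IAut A_\Gamma$, and, up to the finite combinatorics of $\Gamma$, there are only finitely many of them.

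Next, given $\phi\in \IAut A_\Gamma$, I would write $\phi$ as a word in Laurence generators using \cite{fpraag}. Since $\pi(\phi)=1$, the images under $\pi$ of the non-partial-conjugation letters multiply to $1$ in $\img(\pi)$. I would argue that $\img(\pi)$ is finitely presented (it is an arithmetic subgroup of $\GL(n,\Z)$ determined by the link-order on the vertex set of $\Gamma$) and lift a finite set of relations of $\img(\pi)$ back to $\AAG$. Each such lift is an element of $\IAut A_\Gamma$, and the proposition used in the proof of Theorem~\ref{mt:finitelygenerated} lets me express it as a product of partial conjugations and commutator transvections. Repeatedly substituting these lifts reduces the original word for $\phi$ to a product in the proposed generating set.

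The main obstacle is the rewriting bookkeeping: commuting a Laurence generator past a partial conjugation or commutator transvection generally produces a conjugate that must again decompose in the candidate generating set. In the edgeless case (classical Magnus), this is automatic, but in general the supports of partial conjugations interact nontrivially with those of transvections, and one must check that the conjugated elements split into finitely many Laurence-type building blocks. The careful support control afforded by the peak-reduction argument in the proof of Theorem~\ref{mt:finitelygenerated} is what I expect to use to confine the process to finitely many generator types and thereby close up the list of generators.
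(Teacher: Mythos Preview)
Your proposal is essentially the paper's own approach: the same candidate generating set (partial conjugations together with commutator transvections $c\mapsto c[x,y]$), the same Magnus-style argument lifting relations from a finite presentation of $\img(\pi)$, and the same recognition that the crux is showing the candidate subgroup is normal in the group generated by transvections. Two points of difference are worth noting. First, the logical dependency is reversed from what you suggest: in the paper, Proposition~\ref{pr:KZisker} (that $K_Z=\Ker(G_Z\to\Aut H_\Gamma)$) is established in Section~\ref{se:KZisker} \emph{before} and independently of Theorem~\ref{mt:finitelygenerated}, and is then used as an input to both theorems; it is not a byproduct of the peak-reduction machinery for Theorem~\ref{mt:finitelygenerated}. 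Second, the ``rewriting bookkeeping'' you flag as the main obstacle is handled in the paper not by peak reduction or support control but by brute-force case analysis: Sublemmas~\ref{sl:conjidsconj} and~\ref{sl:conjidstrans} verify, by direct evaluation on $X$, a finite list of identities of the form $\tau_{a,b}\,\kappa\,\tau_{a,b}^{-1}=(\text{product of candidate generators})$ for each generator $\kappa$ of $K_Z$, which gives Lemma~\ref{le:KZnormal}. No length arguments are needed. So your plan is sound, but the execution is more elementary than you anticipate.
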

This theorem opens the way for further study of $\IAut A_\Gamma$.
An interesting corollary of this theorem is that the preimage in $\AAG$ of a finitely generated subgroup of $\img(\Aut A_\Gamma\to\Aut H_\Gamma)$ is a finitely generated group.

\subsection{Acknowledgements}
The results of this paper originally appeared in my Ph.D. thesis at the University of Chicago, and some of the research was done under the support of a graduate research fellowship from the National Science Foundation.
I am deeply grateful to Benson Farb, my thesis advisor, for many useful conversations and comments on earlier versions of this work.
I am grateful to Karen Vogtmann for conversations about this project and I am grateful to Ruth Charney for conversations, and for helping me find an obscure reference.
I would also like to thank Hanna Bennett, Nathan Broaddus, Thomas Church, Jim Fowler and Benjamin Schmidt for comments on earlier versions of this paper.

\section{Background}
In this section, we review the notation and the main result from Day~\cite{fpraag}.
Let $L$ be the set of letters $X\cup X^{-1}$.
For $x\in L$, let $\pg{x}\in X$, the \emph{vertex of $x$}, be the unique element of $X\cap \{x,x^{-1}\}$. 
We will use $\stl{x}$ and $\lkl{x}$ as notation for $\st(\pg{x})\cup\st(\pg{x})^{-1}$ and $\lk(\pg{x})\cup\lk(\pg{x})^{-1}$ respectively.
The \emph{support} $\supp w$ of a word or cyclic word $w$ is the subset of $X$ consisting of all generators that appear (or whose inverses appear) in $w$.
There is a reflexive and transitive binary relation on $X$ called the \emph{domination relation}: say $x\geq y$ ($x$ dominates $y$) if $\lk(y)\subset \st(x)$.
Write $x\sim y$ when $x\geq y$ and $y\geq x$; the relation $\sim$ is called the \emph{domination equivalence} relation.

There are four important classes of automorphisms known collectively as the Laurence--Servatius generators: dominated transvections, partial conjugations, inversions, and graphic automorphisms.
For $x, y\in L$ with $x\geq y$ and $\pg{x}\neq \pg{y}$, the \emph{dominated transvection} (or simply \emph{transvection}) $\tau_{x,y}$ is the automorphism that sends
$y \mapsto yx$ 
and fixes all generators not equal to $\pg{y}$.
For $x\in L$ and $Y$ a union of connected components of $\Gamma - \st(\pg{x})$, the \emph{partial conjugation} $c_{x,Y}$ is the automorphism that sends
$y \mapsto x^{-1}yx$ for $y\in Y$
and fixes all generators not in $Y$. 
For $x\in X$, the \emph{inversion} of $x$ is the automorphism that sends
$x \mapsto x^{-1}$
and fixes all other generators.
For $\pi$ an automorphism of the graph $\Gamma$, the \emph{graphic automorphism} of $\pi$ is the automorphism that sends
$x \mapsto \pi(x)$
for each generator $x\in X$.
Servatius defined these automorphisms and conjectured that they generate $\AAG$ in~\cite{se}; Laurence proved that conjecture in~\cite{la}.

We will use the \emph{Whitehead automorphisms} of $\AAG$, as defined by the author in~\cite{fpraag}.
The set of Whitehead automorphisms $\Omega$ is the finite set of all automorphisms of the following two types.
The \emph{type~(1)} Whitehead automorphisms are the finite subgroup of $\AAG$ generated by the inversions and graphic automorphisms.
An automorphism $\alpha\in\AAG$ is a \emph{type~(2)} Whitehead automorphism if there is an element $a\in L$, called the \emph{multiplier} of $\alpha$, such that for all $x\in X$, we have $\alpha(x)\in\{x,xa,a^{-1}x,a^{-1}xa\}$ (note $\alpha(a)=a$).
For $a\in L$ and $A\subset L$ with $a\in A$ and $a^{-1}\notin A$, we use the notation $(A,a)$ to refer to the type~(2) Whitehead automorphism that sends $x\in L-\{a,a^{-1}\}$ to $x$ or $a^{-1}x$ if $x\notin A$ and to $xa$ or $a^{-1}xa$ if $x\in A$, if such an automorphism exists.
Lemma~\ref{le:whdef} of Day~\cite{fpraag} explains when such an automorphism exists.


The following two subsets of $\Omega$ are also from Day~\cite{fpraag}.
The set $\Omega_\ell$ of \emph{long-range} Whitehead automorphisms is the set of all type~(1) Whitehead automorphisms together with all type~(2) Whitehead automorphisms $(A,a)$ with $A\cap\lkl{a}=\emptyset$.
The set $\Omega_s$ of \emph{short-range} Whitehead automorphisms is the set of type~(2) Whitehead automorphisms $(A,a)$ with $A\subset\stl{a}$.

We recall the definition of peak reduction.
The length of a conjugacy class in $\AAG$ is the shortest length of a representative element (with respect to $X$).
We say that a factorization $\alpha=\beta_k\cdots\beta_1$ is peak reduced with respect to a conjugacy class $[w]$ in $\AAG$ if for each $i=1,\ldots k$, we do not have both
\begin{align*}
|\beta_{i+1}\cdots\beta_1([w])|\leq|\beta_i\cdots\beta_1([w])|\\
\vspace{2pt}
\tag*{\text{and}}|\beta_i\cdots\beta_1([w])|\leq|\beta_{i-1}\cdots\beta_1([w])|
\end{align*}
unless all three lengths are equal.
We say that $\alpha$ can be peak reduced by elements of a set $S$ with respect to $[w]$ if there is a factorization $\alpha=\beta_k\cdots\beta_1$ by elements $\beta_1,\ldots,\beta_k$ that is peak reduced with respect to $[w]$.

The following theorem is essentially Theorem~\ref{mt:threeparts} of Day~\cite{fpraag}.
\begin{theorem}\label{th:threeparts}
The set $\OLS$ is a finite generating set for $\AAG$ with the following properties:
\begin{enumerate}
\item \label{it:complementss} 
each $\alpha\in\AAG$ can be written as $\alpha=\beta\gamma$ for some $\beta\in \langle\Omega_s\rangle$ and some $\gamma\in\langle\Omega_\ell\rangle$;
\item
\label{it:srinjs} 
the usual representation $\AAG\to \Aut H_1(A_\Gamma)$ to the automorphism group of the abelianization $H_1(A_\Gamma)$ of $A_\Gamma$ restricts to an embedding $\langle\Omega_s\rangle\into \Aut H_1(A_\Gamma)$; and
\item
\label{it:lrwhalgs} any  $\alpha\in\langle\Omega_\ell\rangle$ can be peak-reduced by elements of $\Omega_\ell$ with respect to any conjugacy class $[w]$ in $A_\Gamma$.
\end{enumerate}
\end{theorem}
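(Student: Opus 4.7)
The plan is to invoke Theorem~\ref{mt:threeparts} of the author's previous paper~\cite{fpraag}, which establishes essentially these three properties; the present statement is a near-restatement of that result, so the proof amounts to citing it and verifying that the conclusions there match the current formulation.

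For part~(\ref{it:complementss}), I would start with any factorization of $\alpha\in\AAG$ as a product of elements of the generating set $\OLS$, and then shuffle all short-range factors to the left. The key ingredient, established in~\cite{fpraag}, is a rewriting lemma: given $\lambda\in\Omega_\ell$ and $\sigma\in\Omega_s$, there exist $\sigma'\in\langle\Omega_s\rangle$ and $\lambda'\in\langle\Omega_\ell\rangle$ with $\lambda\sigma=\sigma'\lambda'$. Iterating this rewriting finitely many times produces the claimed decomposition.

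For part~(\ref{it:srinjs}), the argument is essentially computational. A short-range type~(2) Whitehead automorphism $(A,a)$ satisfies $A\subset\stl{a}$, so for any $x\in A$ with $\pg{x}\neq\pg{a}$, the vertices $\pg{x}$ and $\pg{a}$ are adjacent in $\Gamma$; consequently $(A,a)$ sends $x\mapsto xa$ rather than the conjugated variant $a^{-1}xa$, and this action descends to a visible, nontrivial translation on the abelianization. Combined with the obvious embedding of the type~(1) finite subgroup of inversions and graphic automorphisms into $\Aut H_\Gamma$, a direct normal-form argument shows that $\langle\Omega_s\rangle\into\Aut H_\Gamma$.

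Part~(\ref{it:lrwhalgs}) is the peak reduction theorem for long-range Whitehead automorphisms, proved in~\cite{fpraag} as the main technical result there by adapting Whitehead's classical argument for free groups. The idea is to analyze the Whitehead graph of the conjugacy class $[w]$ and to replace any long-range factorization exhibiting a peak by one that strictly decreases length at that position; the analysis is delicate because elements of $\Omega_\ell$ can act on cyclic words in several qualitatively distinct ways depending on the combinatorics of $\Gamma$. This is the chief obstacle to a from-scratch proof and is the step that would occupy almost all of the real work. Once invoked here, the three assertions together constitute the theorem.
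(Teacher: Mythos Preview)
Your proposal is correct and matches the paper's treatment: the paper does not prove this theorem but simply states it as a restatement of Theorem~\ref{mt:threeparts} from~\cite{fpraag}, exactly as you propose. One small slip in your sketch of part~(\ref{it:srinjs}): the type~(1) Whitehead automorphisms (inversions and graphic automorphisms) belong to $\Omega_\ell$, not $\Omega_s$, so they play no role in the injectivity statement for $\langle\Omega_s\rangle$; the argument is purely about short-range type~(2) automorphisms, which are products of adjacent transvections.
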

\newcommand\partref[1]{part~(\ref{#1}) of Theorem~\ref{th:threeparts}}

We will also make use of the \emph{pure automorphism group} of $A_\Gamma$, denote $\AAGo$.
The group $\AAGo$ is the subgroup of $\AAG$ generated by the partial conjugations, dominated transvections, and inversions.
This group appears in Charney--Crisp--Vogtmann~\cite{ccv} and is useful for technical reasons.
It is easy to see that $\AAGo$ is normal in $\AAG$ and that $\AAG/\AAGo$ is finite (it is a quotient of $\Aut \Gamma$).
The group $\AAGo$ contains all those graphic automorphisms that can be realized as a product of transvections and inversions, so if $\Gamma$ is edgeless or complete, then $\AAGo$ is $\AAG$.
\section{Kernels of restrictions of the homology representation}
\label{se:KZisker}
This section is devoted to the proof of Theorem~\ref{mt:magnusthm}.
We will also prove a proposition that will be used in the proof of Theorem~\ref{mt:finitelygenerated}.

If $x,y,c\in L$ with $x,y\geq c$ and $\pg{x}$, $\pg{y}$, and $\pg{c}$ all distinct, then we write $\tau_{[x,y],c}$ for $[\tau_{x,c},\tau_{y,c}]$.
As the notation suggests, $\tau_{[x,y],c}$ sends $c\mapsto c[x,y]$ and fixes all generators in $X$ not equal to $c$.

For any subset $Z\subset X$, let $G_Z< \AAG$ be generated by the transvections $\tau_{a,b}$ for $a,b\in Z^{\pm1}$ with $a\geq b$, and the (total) conjugations of $A_\Gamma$.
Let $K_Z<\AAG$ be generated by all the $\tau_{[x,y],c}$ and all the partial conjugations $c_{x,\{c\}}$ for $x,y,c\in Z$ with $x,y\geq c$, and the (total) conjugations of $A_\Gamma$.
Note that for each $Z$, we have $K_Z< G_Z$.
We will refer to a partial conjugation of the form $c_{x,\{c\}}$ as a \emph{one-term partial conjugation}.

\begin{remark}
In fact, $K_Z$ is equal to the subgroup generated by the conjugations and the $\tau_{[x,y],c}$ and $c_{x,\{d\}}$ for $x,y,c\in Z^{\pm 1}$ and $d\in Z$ (with appropriate domination conditions).
This is because $c_{x,\{d\}}^{-1}=c_{x^{-1},\{d\}}$, and because $\tau_{[x,y],c}$ with $x,y,c\in Z^{\pm1}$ can always be expressed as a product of generators of $K_Z$.
\end{remark}

\begin{sublemma}\label{sl:conjidsconj}
For any $a,b,c,x\in X$, with $a\geq b$, $a\neq b$, $x\geq c$, and $x\neq c$,
the automorphism $\tau_{a,b}c_{x,\{c\}}\tau_{a,b}^{-1}$ is in $K_{\{a,b,c,x\}}$.

For $Y\subset X$ such that $c_{x,Y}$ is a partial conjugation of $A_\Gamma$, we have that $\tau_{a,b}c_{x,Y}\tau_{a,b}^{-1}$ is a product of elements of $K_{\{a,b,x\}}$ and partial conjugations of the form $c_{z,Y'}$ where $z\in\{a,x\}$ and $Y'\subset Y\cup\{x,a\}$.

If $c_x$ is conjugation by $x$, then $\tau_{a,b}c_x\tau_{a,b}^{-1}$ is in $K_{\{a,b,x\}}$.
\end{sublemma}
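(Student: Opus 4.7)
The plan is a direct computation: evaluate the conjugated automorphism on each generator of $A_\Gamma$ and recognize the result as an explicit product of generators of $K_Z$. The key observation is that $\tau_{a,b}$ moves only the generator $b$ (sending $b \mapsto ba$), and $c_{x,\{c\}}$ moves only the generator $c$ (sending $c \mapsto x^{-1}cx$). Hence the conjugate $\tau_{a,b}\, c_{x,\{c\}}\, \tau_{a,b}^{-1}$ can act nontrivially only on generators belonging to $\{a,b,c,x\}$, and a finite case analysis based on the possible coincidences among these four letters suffices.

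For part (1), I split into subcases according to which among $a, b, c, x$ coincide. In the generic subcase (all four distinct), the computation yields exactly $c_{x,\{c\}}$. If $c=a$, the conjugate becomes $\tau_{[a,x^{-1}],\,b}\cdot c_{x,\{a\}}$; the commutator transvection is defined because $a \geq b$ by hypothesis and $x \geq c = a \geq b$ by transitivity. If $c=b$, the conjugate becomes $c_{x,\{c\}}\cdot\tau_{[x^{-1},a],\,c}$. If $x=b$, the conjugate becomes $c_{b,\{c\}}\cdot c_{a,\{c\}}$, and $c_{a,\{c\}}$ is a legitimate one-term partial conjugation because $a\geq b=x \geq c$ gives $a\geq c$, while $c\notin \st(x)=\st(b)$ together with $\lk(c)\subseteq\st(a)$ ensures $\{\pg{c}\}$ is a component of $\Gamma-\st(\pg{a})$. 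The remaining subcases ($x=a$ alone, and double coincidences such as $c=a,\,x=b$) either give $c_{x,\{c\}}$ outright or reduce to compositions of the forms already listed, all within $K_{\{a,b,c,x\}}$.

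Part (2) extends the same generator-by-generator analysis to an arbitrary support $Y$. For each $y\in Y\setminus\{a,b\}$, the conjugate acts on $y$ exactly as $c_{x,\{y\}}$ does, so these generators aggregate into a single partial conjugation factor $c_{x,Y'}$ with $Y'\subseteq Y\cup\{a\}$. Corrections appear only at the generators $a$ and $b$ and again take the form of a commutator transvection $\tau_{[*,*],b}$ or a one-term partial conjugation $c_{a,\{b\}}$, depending on which of $a, b$ lie in $Y$ and on whether $x$ equals $a$ or $b$. The most delicate subcase is $x=b$: here the conjugate acts on $Y$ as simultaneous conjugation by $ba$, and this has to be factored as a product $c_{b,Y}\cdot c_{a,Y'}$ for an appropriate $Y'\subseteq Y\cup\{a,b\}$. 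Checking that $c_{a,Y'}$ is a valid partial conjugation uses the domination $a\geq b$, which gives $\lk(\pg{b})\subseteq\st(\pg{a})$ and forces the component structure of $\Gamma-\st(\pg{a})$ to be compatible with that of $\Gamma-\st(\pg{b})$ after possibly enlarging the support by $\{a\}$.

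Part (3) is immediate. Since $c_x$ is inner, $\tau_{a,b}\,c_x\,\tau_{a,b}^{-1} = c_{\tau_{a,b}(x)}$, which equals $c_x$ when $x\neq b$ and equals $c_{ba}=c_a\cdot c_b$ when $x=b$; in either case the result is a product of total conjugations in $K_{\{a,b,x\}}$. The main obstacle throughout is the bookkeeping in part (2): each subcase requires verifying that the auxiliary partial conjugations introduced have valid supports (genuine unions of components of the relevant stars) and that the required domination relations among the letters follow from the hypotheses by transitivity of the domination order.
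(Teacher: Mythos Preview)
Your approach is correct and essentially the same as the paper's: both proceed by direct case analysis and verify each identity by evaluating on generators. The only organizational difference is that the paper proves the general-$Y$ statement first via four global identities (organized by whether $a\in Y$, $b\in Y$, $a=x$, $b=x$) and then specializes to $Y=\{c\}$, whereas you handle the one-term case first and then ``aggregate'' for general $Y$; the paper's ordering is slightly cleaner because each identity such as $\tau_{a,b}c_{x,Y}\tau_{a,b}^{-1}=c_{x,Y}\tau_{[x^{-1},a],b}$ covers an entire family of $Y$ at once and makes the validity of the resulting partial conjugations transparent, while your aggregation step requires the support-checking you flag at the end.
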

\begin{proof}
Suppose that $a,b,x\in X$, $a\geq b$, and $Y\subset X$ such that $c_{x,Y}$ is a partial conjugation.
The lemma will follow from several identities of automorphisms, which can be verified by evaluating the automorphisms on $X$.
Note that if $a=x$, then $\tau_{a,b}$ and $c_{x,Y}$ commute.
If both $a,b\in Y$, then $\tau_{a,b}$ and $c_{x,Y}$ commute.
If $a\in Y$, $b\notin Y$ and $b\neq x$, then the following identity applies:
\begin{equation}\label{eq:ainYgen}\tau_{a,b}c_{x,Y}\tau_{a,b}^{-1}=c_{x,Y}\tau_{[x,a],b}\end{equation}
The use of $\tau_{[x,a],b}$ is allowed, since if $a$ and $b$ are in different components of $\Gamma-\st(x)$ and $a\geq b$, then $x\geq b$.
If $a\in Y$ and $b=x$, then:
\begin{equation}\label{eq:ainYbisx}\tau_{a,x}c_{x,Y}\tau_{a,x}^{-1}=c_{a,(Y-a+x)}c_{x,Y}\end{equation}
These terms are allowed since if $a\geq x$ and $Y$ is a union of connected components of $\Gamma-\st(x)$, then $Y-a$ and $Y-a+x$ are both unions of connected components of $\Gamma-\st(a)$.
We have covered all the cases where $a\in Y$ or $a=x$, so we assume that $a\notin Y$ and $a\neq x$.
If both $a,b\notin Y$ and $x\neq a$ and $x\neq b$, then then $\tau_{a,b}$ and $c_{x,Y}$ commute.
If $b\in Y$, then:
\begin{equation}\label{eq:binYgen}\tau_{a,b}c_{x,Y}\tau_{a,b}^{-1}=c_{x,Y}\tau_{[x^{-1},a],b}\end{equation}
As in Equation~(\ref{eq:ainYgen}), the conditions ensure that $\tau_{[x^{-1},a],b}$ is allowed.
If $b=x$, then:
\begin{equation}\label{eq:bisxgen}\tau_{a,x}c_{x,Y}\tau_{a,x}^{-1}=c_{x,Y}c_{a,Y}\end{equation}
Since $a\geq x$ and $a\notin Y$, we have that $Y$ is a union of connected components of $\Gamma-\st(a)$ and $c_{a,Y}$ is allowed.
This proves the second statement in the lemma.

If we have some $c\in X$ with $x\geq c$, then we can take $Y=\{c\}$ and each of the Equations from~(\ref{eq:ainYgen}) through~(\ref{eq:bisxgen}) applies, proving the first statement in the lemma.

The third statement is obvious since the groups of inner automorphisms is normal in $\AAG$.
\end{proof}

\begin{sublemma}\label{sl:conjidstrans}
For any $a,b,c,x,y\in X$, with $a\geq b$, $a\neq b$, $x\geq c$, $x\neq c$, $y\geq c$, $y\neq c$, and $x\neq y$, the automorphism $\tau_{a,b}\tau_{[x,y],c}\tau_{a,b}^{-1}$ is in $K_{\{a,b,c,x,y\}}$.
\end{sublemma}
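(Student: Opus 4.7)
The plan is to use the commutator definition $\tau_{[x,y],c} = [\tau_{x,c},\tau_{y,c}]$ in order to reduce the problem to understanding how $\tau_{a,b}$ conjugates an \emph{individual} transvection. Specifically, write
\[
\tau_{a,b}\tau_{[x,y],c}\tau_{a,b}^{-1} = [\tau_{a,b}\tau_{x,c}\tau_{a,b}^{-1},\ \tau_{a,b}\tau_{y,c}\tau_{a,b}^{-1}].
\]
So the first step is to build a short menu of identities expressing each conjugate $\tau_{a,b}\tau_{x,c}\tau_{a,b}^{-1}$ as a product of one or two transvections, possibly modified by an element of $K_{\{a,b,c,x\}}$. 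This is done by case analysis on how $\{a,b\}$ overlaps with $\{x,c\}$: in the generic case the two transvections commute, while in the collision cases I expect identities such as $\tau_{a,x}\tau_{x,c}\tau_{a,x}^{-1} = \tau_{x,c}\tau_{a,c}$ (when $b=x$, using transitivity to get $a\geq c$) and $\tau_{a,c}\tau_{x,c}\tau_{a,c}^{-1} = \tau_{x,c}\tau_{[x^{-1},a],c}$ (when $b=c$). Each identity will be verified directly by evaluating on the generators in $X$, in the style of Sublemma~\ref{sl:conjidsconj}.

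With these identities in hand, I would expand the commutator on the right-hand side using the standard commutator identities $[UV,W] = U[V,W]U^{-1}[U,W]$ and $[U,VW] = [U,V]\cdot V[U,W]V^{-1}$. Each resulting factor is either (a) a commutator transvection $\tau_{[p,q],c}$ with $p,q \in \{a,b,x,y\}^{\pm 1}$ all dominating $c$, which lies in $K_{\{a,b,c,x,y\}}$ by the remark preceding Sublemma~\ref{sl:conjidsconj}; (b) a one-term partial conjugation already generating $K$; or (c) a conjugate of such an element by a transvection, which can be absorbed by iterating the identities from the first step, together with Sublemma~\ref{sl:conjidsconj} to handle any residual conjugations of partial conjugations.

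The main obstacle will be managing the proliferation of subcases, since neither $a$ nor $b$ is assumed to satisfy any relation with $x$, $y$, or $c$: one must separately consider $b=c$, $b=x$, $b=y$, $a=c$, $a=x$, $a=y$, and combinations of these. Within each subcase, every auxiliary commutator transvection produced must have its three vertices in $\{a,b,c,x,y\}$ and must satisfy the domination inequalities needed for membership in $K_{\{a,b,c,x,y\}}$; transitivity of $\geq$ (for instance, $a\geq x$ and $x\geq c$ force $a\geq c$) will be the key tool in ensuring these auxiliary transvections are admissible. The case structure is tedious but each case should reduce by the same template, so the argument closes by bookkeeping rather than by any new identity beyond those just described.
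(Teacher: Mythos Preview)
Your strategy differs from the paper's: rather than expanding the commutator $\tau_{[x,y],c}=[\tau_{x,c},\tau_{y,c}]$ and working with conjugates of the \emph{individual} transvections, the paper treats $\tau_{[x,y],c}$ as a single automorphism ($c\mapsto c[x,y]$) and writes down, case by case, an explicit identity expressing $\tau_{a,b}\tau_{[x,y],c}\tau_{a,b}^{-1}$ as a product of $K$--generators. The cases are organized by how $\{a,b\}$ meets $\{x,y,c\}$, and the verifications are by direct evaluation on $X$; for instance, when $b=c$ one gets $\tau_{a,b}\tau_{[x,y],b}\tau_{a,b}^{-1}=c_{a,\{b\}}\tau_{[x,y],b}c_{a,\{b\}}^{-1}$.

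Your plan runs into a real obstacle in the case $a=c$, which your menu of identities omits. Here $\tau_{a,b}\tau_{x,c}\tau_{a,b}^{-1}=\tau_{a,b}\tau_{x,a}\tau_{a,b}^{-1}$ no longer acts on $c$ alone: it sends $a\mapsto ax$ and $b\mapsto bax^{-1}a^{-1}$, so a new transvection on $b$ appears and the commutator expansion now mixes two acted-on letters. Worse, in the subcase $a=c$ and $b=x$ (which forces $a\sim b$), the conjugate $\tau_{a,b}\tau_{b,a}\tau_{a,b}^{-1}$ sends $a\mapsto aba$, $b\mapsto a^{-1}$; this is not a product of one or two transvections times a $K$--element, so it does not fit your template at all. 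The paper handles exactly this subcase with a long explicit identity involving the total conjugations $c_a,c_b$ in addition to one-term partial conjugations and commutator transvections; nothing in your outline produces such a formula.

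There is also a circularity you have not controlled. Expanding $[UV,W]$ with your commutator identities produces terms of the form $\tau_{p,q}\,\tau_{[r,s],c}\,\tau_{p,q}^{-1}$, i.e.\ conjugates of commutator transvections by transvections---precisely the statement under proof. Your remark that these ``can be absorbed by iterating the identities from the first step'' does not close the loop, because those identities concern conjugates of \emph{single} transvections, not of commutator transvections; to reduce the latter you would re-enter the same procedure, and you give no decreasing measure guaranteeing termination. The paper sidesteps this entirely by never opening the commutator: each case is settled by one explicit identity, checked on generators.
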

\begin{proof}
Note that
\[\tau_{[y,x],c}^{-1}=\tau_{[x,y],c}\]
so we may switch $x$ and $y$ in our enumeration of cases.

If $a,b\notin\{x,y,c\}$, then it follows from Proposition~\ref{pr:identities} of Day~\cite{fpraag} $\tau_{a,b}$ and $\tau_{[x,y],c}$ commute.
If $c=b$ then one can verify by evaluation on $X$ that:
\[\tau_{a,b}\tau_{[x,y],b}\tau_{a,b}^{-1}=c_{a,\{b\}}\tau_{[x,y],b}c_{a,\{b\}}^{-1}\]
This works whether or not $a\in\{x,y\}$.
If $c=a$ and $b\notin\{x,y\}$, then it follows from Proposition~\ref{pr:identities} of Day~\cite{fpraag} and the previous case that:
\[\tau_{a,b}\tau_{[x,y],a}\tau_{a,b}^{-1}=\tau_{[x,y],a}c_{a,\{b\}}\tau_{[y,x],b}c_{a,\{b\}}^{-1}\]
If $c=a$ and $b=x$, then 
\[\tau_{a,b}\tau_{[b,y],a}\tau_{a,b}^{-1}=c_{a,\{b\}}c_a^{-1}c_{b,\{a\}}c_b^{-1}c_{y,\{b\}}^{-1}\tau_{[y,a],b}c_{y,\{a\}}\tau_{[y^{-1},b^{-1}],a^{-1}}c_bc_{b,\{a\}}^{-1}c_ac_{a,\{b\}}^{-1}\]
where $c_a$ and $c_b$ denote the (total) conjugations by $a$ and $b$ respectively.
Our assumptions dictate that $y\geq a\sim b$, so all the terms in this equation are allowed.
This identity can be verified by evaluation on $X$.

Now we may assume that $c\notin\{a,b\}$.
If $b=x$ and $a\neq y$, then the following identity applies:
\[\tau_{a,b}\tau_{[b,y],c}\tau_{a,b}^{-1}=c_{b,\{c\}}\tau_{[a,y],c}c_{b,\{c\}}^{-1}\tau_{[b,y],c}\]
Again, this identity can be verified by evaluation.
There are then two remaining cases: $a=x$ and $b\neq y$;  and $a=x$ and $b=y$.
In both of these cases, it follows from Proposition~\ref{pr:identities} of Day~\cite{fpraag} that $\tau_{a,b}$ commutes with $\tau_{[x,y],c}$.
\end{proof}

\begin{lemma}\label{le:KZnormal}
For any $Z\subset X$, the group $K_Z$ is normal in $G_Z$.
\end{lemma}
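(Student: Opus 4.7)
The plan is to show that conjugation by each generator of $G_Z$ sends each generator of $K_Z$ into $K_Z$; since both generating sets are essentially closed under inversion (for instance $\tau_{a,b}^{-1}=\tau_{a^{-1},b}$ is again a generator of $G_Z$, and $c_w^{-1}=c_{w^{-1}}$), this suffices for normality. Recall that $G_Z$ is generated by the transvections $\tau_{a,b}$ with $a,b\in Z^{\pm1}$ together with the inner automorphisms $c_w$ of $A_\Gamma$, while $K_Z$ is generated by the commutator transvections $\tau_{[x,y],c}$, the one-term partial conjugations $c_{x,\{c\}}$, and the inner automorphisms. This leaves six cases to check.

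Whenever either the conjugating or the conjugated element is inner, the result follows from the normality of the inner automorphism group in $\AAG$: for any $\alpha\in\AAG$ we have $\alpha c_w\alpha^{-1}=c_{\alpha(w)}$, and dually $c_w\alpha c_w^{-1}=c_{w\alpha(w^{-1})}\cdot\alpha$. In either case the output lies in $K_Z$ whenever the original element of $K_Z$ does, since $K_Z$ contains every inner automorphism by definition.

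The two remaining cases---conjugation of a one-term partial conjugation $c_{x,\{c\}}$ by a transvection, and conjugation of a commutator transvection $\tau_{[x,y],c}$ by a transvection---are covered respectively by the first statement of Sublemma~\ref{sl:conjidsconj} and by Sublemma~\ref{sl:conjidstrans}. Each sublemma's conclusion places the result in some $K_{\{a,b,c,x,y\}}$, and this subgroup is contained in $K_Z$ because all the relevant vertices lie in $Z$. The main obstacle is then a bookkeeping one: the sublemmas are stated for $a,b\in X$, while a generator $\tau_{a,b}$ of $G_Z$ may have $a$ or $b$ in $X^{-1}$. I expect this to be dispatched by extending the identities of those sublemmas to elements of $L$ with the obvious sign adjustments, so it is essentially a routine extension rather than a substantive new difficulty.
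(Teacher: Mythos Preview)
Your proposal is correct and follows the same overall strategy as the paper: reduce normality to checking that conjugation by each generator of $G_Z$ sends each generator of $K_Z$ back into $K_Z$, then invoke Sublemmas~\ref{sl:conjidsconj} and~\ref{sl:conjidstrans} for the two nontrivial cases.

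The one point of difference is exactly the ``bookkeeping obstacle'' you flag at the end. You propose to extend the identities of the sublemmas from $a,b\in X$ to $a,b\in L$ by redoing the case analysis with sign adjustments. That works, but the paper sidesteps the issue more cleanly: rather than enlarging the sublemmas, it shrinks the generating set. Using the identities
\[
\tau_{a^{-1},b}=\tau_{a,b}^{-1}
\qquad\text{and}\qquad
\tau_{a,b^{-1}}\tau_{a,b}=c_{a,\{b\}}\quad(a,b\in X),
\]
one sees that every transvection $\tau_{a,b}$ with $a,b\in Z^{\pm1}$ is a product of elements of $K_Z$ and of transvections $\tau_{a',b'}^{\pm1}$ with $a',b'\in Z$. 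Hence $G_Z$ is already generated by the generators of $K_Z$ together with $\{\tau_{a,b}:a,b\in Z\}$. Conjugation by elements of $K_Z$ trivially preserves $K_Z$, and conjugation by $\tau_{a,b}$ with $a,b\in Z$ is precisely what the sublemmas cover as stated. So no extension of the sublemmas is needed. This is worth knowing: the same trick lets you avoid sign-bookkeeping in similar normality arguments elsewhere.
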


\begin{proof}
If $a,b\in X$ with $a\geq b$, then $\tau_{a^{-1},b}=\tau_{a,b}^{-1}$, and $\tau_{a,b^{-1}}\tau_{a,b}=c_{a,\{b\}}$.
This means that $G_Z$ is generated by the generators of $K_Z$ together with the transvections $\tau_{a,b}$ with $a,b\in K$ (in particular, not in $K^{-1}$).
Then the identities from Sublemma~\ref{sl:conjidsconj} and Sublemma~\ref{sl:conjidstrans} indicate that the conjugate of any generator of $K_Z$ by a generator of $G_Z$ can be expressed as a product of elements of $K_Z$.
\end{proof}

The proof of the following proposition is a generalization of Magnus's proof that $IA_n$ is finitely generated~\cite{maggitter}.

\begin{proposition}\label{pr:KZisker}
For any $Z\subset X$, we have $K_Z=\Ker(G_Z\to \Aut H_\Gamma)$.
\end{proposition}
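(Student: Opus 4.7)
The easy direction $K_Z\subseteq\Ker(G_Z\to\Aut H_\Gamma)$ follows by checking each type of generator of $K_Z$: the commutator transvection $\tau_{[x,y],c}$ sends $\pg{c}\mapsto \pg{c}\cdot[x,y]$ and so acts as the identity on $[\pg{c}]$; the one-term partial conjugation $c_{x,\{c\}}$ sends $c\mapsto x^{-1}cx$, again acting as the identity on $[c]$; and total conjugations are inner automorphisms, which act trivially on abelianization.

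For the reverse inclusion I plan to adapt Magnus's 1934 strategy. By Lemma~\ref{le:KZnormal}, $K_Z\triangleleft G_Z$, so the homology representation descends to $G_Z/K_Z$, and my aim is to prove the descended map is injective. Given $\alpha\in\Ker(G_Z\to\Aut H_\Gamma)$, I would first write $\alpha$ as a word in the generators of $G_Z$; modulo $K_Z$, the total conjugations can be dropped, leaving a product of transvections $\tau_{a_i,b_i}$ with $a_i,b_i\in Z^{\pm1}$. The key observation is that, modulo $K_Z$, transvections satisfy Steinberg-type relations: $[\tau_{a,b},\tau_{a',b}]=\tau_{[a,a'],b}\in K_Z$ (same target); $[\tau_{a,b},\tau_{c,d}]=1$ when the vertex supports are disjoint; and $[\tau_{a,b},\tau_{b,c}]\equiv\tau_{a,c}$ modulo $K_Z$, the error term being a $\tau_{[\cdot,\cdot],c}$-generator that one checks by direct computation. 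Using these, I would sort the word into a normal form $\prod_v\prod_{a}\tau_{a,v}^{n_{a,v}}$ grouped by target vertex in a linear extension of the domination preorder. Then same-target transvections induce commuting elementary matrices on $H_\Gamma$, and a domination-compatible ordering of the targets suppresses Steinberg corrections in the matrix product, so the exponents $n_{a,v}$ can be recovered as entries of the image matrix; if the image is trivial, every $n_{a,v}=0$, forcing $\alpha\in K_Z$.

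The hardest part will be carrying out this normalization rigorously. Each Steinberg move $[\tau_{a,b},\tau_{b,c}]\equiv\tau_{a,c}$ introduces a new transvection whose target must itself be handled, so termination will need a lexicographic complexity measure (for instance, word length combined with a count of out-of-order target pairs relative to the chosen ordering). Equally delicate is verifying, at each move, that the domination conditions required for the transvections being manipulated are met in $Z$, so that every identity invoked is legal in $G_Z$. Sublemmas~\ref{sl:conjidsconj} and~\ref{sl:conjidstrans}, which carry out the analogous conjugation computations for one-term partial conjugations and for transvection-commutators, should serve both as a template for the case analysis and as key technical inputs for closing each sub-case.
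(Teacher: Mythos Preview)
Your easy direction is fine, and the overall architecture---use normality of $K_Z$ to reduce to showing that $G_Z/K_Z$ injects into $\Aut H_\Gamma$---matches the paper's. But your proposed mechanism for proving injectivity has a real gap.

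The normal form $\prod_v\prod_a\tau_{a,v}^{n_{a,v}}$ you aim for does not exist in general. The obstruction is domination-\emph{equivalent} vertices. If $a\sim b$ in $Z$, then both $\tau_{a,b}$ and $\tau_{b,a}$ lie in $G_Z$, and on homology they generate a copy of $\SL_2(\Z)$ acting on $\langle[a],[b]\rangle$. Your Steinberg move $[\tau_{a,b},\tau_{b,c}]\equiv\tau_{a,c}$ requires $a\neq c$; it says nothing about $[\tau_{a,b},\tau_{b,a}]$, and indeed no commutator-rewriting will let you sort a general word in $\tau_{a,b},\tau_{b,a}$ into the shape $\tau_{a,b}^m\tau_{b,a}^n$, since most elements of $\SL_2(\Z)$ are not of the form $E_{12}^mE_{21}^n$. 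So the sorting procedure cannot terminate as described, and even if it did, the exponents would not be readable from the matrix. A linear extension of the domination preorder does not resolve this, because $a$ and $b$ are incomparable in the strict order.

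The paper sidesteps this entirely. Rather than build a normal form, it quotes a presentation of the image $\img(G_Z\to\Aut H_\Gamma)$ (from Corollary~\ref{co:tvgenstruct} of~\cite{fpraag}) whose relators include not only the Steinberg relations you list but also the $\SL_2$-type relations $(E_{a,b}E_{b,a}^{-1}E_{a,b})^4=1$ and a companion relation when $|C_i|=2$. It then checks, relation by relation, that the lift of each relator to $G_Z$ (replacing $E_{a,b}$ by $\tau_{a,b}$) lies in $K_Z$; the $\SL_2$ relators lift to words that, modulo $K_Z$, equal powers of the order-$4$ permutation $\sigma_{a,b}$. Once every relator lifts into $K_Z$, any element of the kernel is a product of conjugates of such lifts together with inner automorphisms and one-term partial conjugations, all of which lie in the normal subgroup $K_Z$. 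This is exactly Magnus's presentation-theoretic template; your approach tries to replace it with an explicit normal form, which would require reproving the presentation of the image by hand.
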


\begin{proof}
Let $C_1\cup\cdots\cup C_m=Z$ be the decomposition of $Z$ into domination equivalence classes.
Since partial conjugations map to the identity in $\Aut H_\Gamma$, it follows from Corollary~\ref{co:tvgenstruct} of Day~\cite{fpraag} that $\img(G_Z\to\Aut H_\Gamma)$ has a presentation where the generators are the elementary row operations $E_{a,b}=(\tau_{a,b})_*$ such that $a\geq b$, for $a,b\in Z$, and the relations are as follows:
%
%
\begin{enumerate}
\item\label{it:rr1}
$[E_{a,b},E_{c,d}]=1$ if $b\neq c$ and $a\neq b$,
\item\label{it:rr2}
$[E_{a,b},E_{b,d}]E_{a,d}^{-1}=1$ if $a\neq d$,
\item\label{it:rr3}
$(E_{a,b}E_{b,a}^{-1}E_{a,b})^4=1$, if $a\sim b$ and $a\neq b$,
\item\label{it:rr4}
$(E_{a,b}E_{b,a}^{-1}E_{a,b})^2(E_{a,b}E_{b,a}^{-1}E_{a,b}E_{b,a})^{-3}=1$, if $a,b\in C_i$, $a\neq b$ and $|C_i|=2$.
\end{enumerate}

Consider the lifts of the relations gotten by replacing each of the $E_{a,b}$ with the corresponding $\tau_{a,b}$.
We claim that these lifts are all in $K_Z$.
Relation~(\ref{it:rr1}) obviously lifts to $\tau_{[a,c],b}$ if $b=d$ and lifts to the trivial element otherwise.
Relation~(\ref{it:rr2}) lifts to $[\tau_{a,b},\tau_{b,d}]\tau_{a,d}^{-1}$, which is $\tau_{[b,a],d}$.
We know $K_Z$ is normal in $G_Z$, so we say two elements of $G_Z$ are equal modulo $K_Z$ if their difference is in $K_Z$.
Since $\tau_{a,b^{-1}}\tau_{a,b}=c_{a,\{b\}}$, we know that $\tau_{a,b}^{-1}$ and $\tau_{a,b^{-1}}$ are equal modulo $K_Z$.
Then the lift the element $E_{a,b}E_{b,a}^{-1}E_{a,b}$ is equal, modulo $K_Z$, to $\tau_{a,b^{-1}}^{-1}\tau_{b,a^{-1}}\tau_{a,b}$, which is equal to the permutation $\sigma_{a,b}$ of order $4$ from Equation~(\ref{eq:R5}) of Day~\cite{fpraag}, according to that equation.
So relation~(\ref{it:rr3}) lifts to an element of $K_Z$.
The lift of the element $(E_{a,b}E_{b,a}^{-1}E_{a,b}E_{b,a})^3$ is equal modulo $K_Z$ to $(\sigma_{a,b}\tau_{b,a})^{3}$.
By Equation~(\ref{eq:R6}) of Day~\cite{fpraag}, $(\sigma_{a,b}\tau_{b,a})^3=\tau_{a,b}^{-1}\tau_{b,a^{-1}}^{-1}\tau_{a,b^{-1}}\sigma_{a,b}^{3}$.
This is equal modulo $K_Z$ to $\tau_{a,b}^{-1}\tau_{b,a^{-1}}^{-1}\tau_{a,b}^{-1}\sigma_{a,b}^{3}$, which is $\sigma^2$ by Equation~(\ref{eq:R5}) of Day~\cite{fpraag}.
So relation~(\ref{it:rr4}) lifts to an element of $K_Z$.

The group $K_Z$ is obviously in $\Ker(G_Z\to \Aut H_\Gamma)$.
Any element of $G_Z$ can be expressed as a product of inner automorphisms, one-term partial conjugations, and lifts $\{\tau_{a,b}|a,b\in Z, a\geq b\}$ of the $\{E_{a,b}|a,b\in Z, a\geq b\}$.
Since these lifts map to the generators of our presentation for $\img(G_Z\to \Aut H_\Gamma)$ and the inner automorphisms and one-term partial conjugations are in $\Ker(G_Z\to\Aut H_\Gamma)$, 
it follows that any element of $\Ker(G_Z\to \Aut H_\Gamma)$ can be written as a product of conjugates of inner automorphisms, one-term partial conjugations, and lifts of relators from the presentation.
The group $K_Z$ contains all the inner automorphisms, one-term partial conjugations, and lifts of the relators. 
By Lemma~\ref{le:KZnormal}, $K_Z$ is normal in $G_Z$, so it contains all the conjugates of these elements.
So $\Ker(G_Z\to \Aut H_\Gamma)<K_Z$, and they are equal.
\end{proof}

Recall from the introduction that $\IAut A_\Gamma$ denotes the kernel of the homology representation.
We will show Theorem~\ref{mt:magnusthm} by showing that $\IAut A_\Gamma$ is generated by the generators of $K_X$, together with the partial conjugations of $A_\Gamma$.
\begin{proof}[Proof of Theorem~\ref{mt:magnusthm}]
Let $\rho\co\AAG\to \Aut H_\Gamma$ be the homology representation.
As previously noted, $\AAGo$ is normal in $\AAG$.
It is apparent from considering the generators of $\AAG$ and the definition of $\AAGo$ that $\rho$ induces an isomorphism $\AAG/\AAGo\cong \rho(\AAG)/\rho(\AAGo)$.
From this we deduce that $\IAut A_\Gamma<\AAGo$.

Let $K$ be the subgroup of $\AAGo$ generated by $K_X$ and the partial conjugations.
Note that $\AAGo$ is generated by $G_X$ together with $K$ and the inversion automorphisms. 

By Sublemma~\ref{sl:conjidsconj}, Lemma~\ref{le:KZnormal} and the fact that inversions normalize $K$, we know that $K$ is normal in $\AAGo$.
So if $\alpha\in \AAGo$, then $\alpha$ can be written as $\alpha= \beta\gamma$ where $\beta\in K$ and  $\gamma$ is a product of elements of $G_X$ and inversions.
If we further assume that $\alpha$ is in $\IAut A_\Gamma$, then it follows from Proposition~\ref{pr:KZisker} that $\gamma$ is in $K_X$.
So $\IAut A_\Gamma< K$.
Since the reverse inclusion is obvious, it follows that $\IAut A_\Gamma=K$ and $\IAut A_\Gamma$ is generated by the finite set of the generators of $K_X$ together with the partial conjugations.
\end{proof}

\section{Symplectic structures}
\subsection{A counterexample}\label{ss:counterexample}

As an alternate definition for a symplectic structure on a right-angled Artin group, one can consider a pair $(w,\widetilde Q)$ where $w\in A'_\Gamma$ is a surface relator, $\widetilde Q\in \Lambda^2 H_\Gamma$ is a symplectic form, and $w$ and $\widetilde Q$ map to the same element under the respective maps of $A'_\Gamma$ and $\Lambda^2 H_\Gamma$ to $A'_\Gamma/A^{(2)}_\Gamma$.
The group 
$(\AAG)_{(w,\widetilde Q)}$ can also be seen as an analogue to a mapping class group or a symplectic group. 
This differs from Definition~\ref{de:sympstruct} in that $\widetilde Q$ is a symplectic form on all of $\Lambda^2 H_\Gamma$, instead of being an alternating form supported on a subspace.

This alternate definition is attractive because the groups defined in this way have symplectic homology representations, while in general the groups $\Mod(\Gamma,w,Q)$ do not.
However, this alternate definition is less attractive because of the following example, which is a group that satisfies the alternate definition and is not finitely generated.

\begin{figure}[htb]
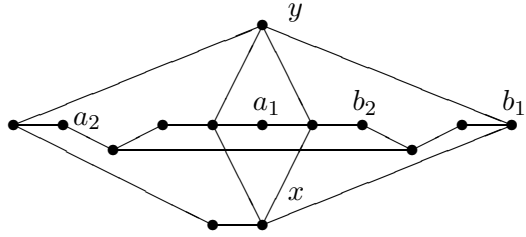

\begin{center}
\badexample
\end{center}
\caption{A counterexample to the finite generation of a different group.}\label{fi:badex}
\end{figure}

\begin{example}\label{ex:counterexample}
Take $\Gamma$ to be the graph indicated in Figure~\ref{fi:badex}.
Let $w$ be the word $[a_1,b_1][a_2,b_2]$, where $a_1$, $b_1$, $a_2$ and $b_2$ are as indicated.
By pairing off the remaining vertices in adjacent pairs $a_3,b_3,\ldots, a_7,b_7$ (which include the vertices labeled $x$ and $y$), we get a symplectic form:
\[\widetilde Q=\sum_{i=1}^7 [a_i]\wedge[b_i]\]
It is easy to see that $(w,\widetilde Q)$ satisfies the alternate definition.

One can check that the only examples of domination in this $\Gamma$ are $x\geq a_1$ and $y\geq a_1$, and that the only vertices whose stars separate $\Gamma$ are $x$ and $y$, both of which separate it into $\{a_1\}$ and one other component.
By inspecting the valences of the vertices, it is apparent that there are no nontrivial automorphisms of the graph $\Gamma$.

From Laurence's theorem (Theorem~\ref{th:lau}), we can tell that $\AAG$ is generated by conjugations, inversions, and the following four transvections:
\[\{\tau_{x,a_1},\tau_{y,a_1},\tau_{x,a_1^{-1}},\tau_{y,a_1^{-1}}\}\]
Note that $\tau_{x,a_1}(a_1)=a_1x$ and $\tau_{y,a_1}(a_1)=a_1y$, while $\tau_{x,a_1^{-1}}(a_1)=x^{-1}a_1$ and $\tau_{y,a_1^{-1}}(a_1)=y^{-1}a_1$.
Let $F_2$ denote the free group on the generators $x$ and $y$.
If $\alpha$ is in the subgroup generated by these four transvections, then $\alpha(a_1)=u^{-1}a_1v$ for some $u,v\in F_2$.
The map $\alpha\mapsto (u,v)$ is an isomorphism from this subgroup to $F_2\times F_2$.
Then we have
\begin{equation}\label{eq:outex}\OAG\cong (\Z/2\Z)^{14}\ltimes (F_2\times F_2)\end{equation}
where the fourteen generators of order 2 are the inversions and the inversions act on the transvections by the rule of Equation~(\ref{eq:R6}).

If $(u,v)\in F_2\times F_2$, then the corresponding outer automorphism sends the conjugacy class $[w]$ to the class represented by the graphically reduced cyclic word $u^{-1}a_1b_1a_1^{-1}ub_1^{-1}[a_2,b_2]$.
The $v$ does not appear because $x$ and $y$ both commute with $b_1$.
It then follows from Equation~(\ref{eq:outex}) that $(\OAG)_{[w]}$ is the subgroup generated by the images of
$\tau_{x,a_1}$, $\tau_{y,a_1}$ together with the inversions of vertices that do not appear in $w$.
The only inner automorphisms fixing $w$ are conjugation by powers of $w$.
At this point, we can see that
\[(\AAG)_w\cong ((\Z/2\Z)^{10}\ltimes F_2)\times \Z\]
where the copy of $F_2$ is generated by $\tau_{x,a_1}$ and $\tau_{y,a_1}$ and the copy of $\Z$ is generated by conjugation by $w$.

In the subgroup of $\Aut H_\Gamma$ generated by $(\tau_{x,a_1})_*$ and $(\tau_{y,a_1})_*$, it is easy to check that only the trivial element preserves $\widetilde Q$.
Then it follows from Proposition~\ref{pr:KZisker} (with $Z=\{x,y,a_1\}$) that the group $(\AAG)_{\widetilde Q}$ is also finitely generated.
We do not see any groups that are not finitely generated until we stabilize both $w$ and $\widetilde Q$.

As before, if $\alpha\in \langle \tau_{x,a_1},\tau_{y,a_1}\rangle$, then $\alpha(a_1)=a_1u$, where $u\in\langle x,y\rangle\cong F_2$ and the map $\alpha\mapsto u$ is an isomorphism.
The subgroup of $\langle\tau_{x,a_1},\tau_{y,a_1}\rangle$ fixing $Q$ is then isomorphic to the kernel of the abelianization map $F_2\to \Z^2$.
The only products of inversions preserving both $w$ and $\widetilde Q$ are $N_{a_i}N_{b_i}$ for $i=3,\ldots,7$ and their products, where $N_{z}$ denotes the inversion with respect to $z$.
We can then deduce that:
\[(\AAG)_{(w,\widetilde Q)}\cong ((\Z/2\Z)^5\ltimes (\Ker(F_2\to \Z^2)))\times \Z\]
Since $\Ker(F_2\to\Z^2)$ is an infinite rank free group, we have that $(\AAG)_{(w,\widetilde Q)}$ is not finitely generated.

On the other hand, if we take $Q$ to be $\widetilde Q$ minus the image of $w$ in $\Lambda^2 H_\Gamma$, then 
\[\Mod(\Gamma,w,Q)\cong ((\Z/2\Z)^5\ltimes F_2)\times \Z,\]
which is finitely generated.
\end{example}
\begin{remark}
This example shows that the image of $\Mod(\Gamma,w,Q)$ under the homology representation $\AAG\to\Aut H_\Gamma$ need not be symplectic.
Although $\Mod(\Gamma,w,Q)$ fixes $w$ and fixes $Q$, it doesn't necessarily fix $\widetilde Q=f(w)+Q$, where $f:A'_\Gamma\to\Lambda^2 H_\Gamma$ is as in Remark~\ref{rem:spform}.
This is because the map $f$ is not usually equivariant. 
\end{remark}

\subsection{Symplectic row reduction with domination}
At this point, we assume that $|X|=2g$ is even.
Pick a bijection $()^*:L\to L$ such that $(a^*)^*=a^{-1}$ for all $a\in L$, and pick a set of $g$ letters $S=\{a_1,\ldots,a_g\}\subset L$ such that $S^*\cup S$ contains $x$ or $x^{-1}$ for each $x\in X$.
Let
\[Q=\sum\{[a_i]\wedge[a_i^*]|a_i^*\in\lkl{a_i}\}\in\Lambda^2H_\Gamma\]
and let $w_0$ be the concatenation of the words $[a_i,a_i^*]$ for those $i$ for which $a_i^*\notin\lkl{a_i}$, in increasing order of the index $i$.
Then $(w_0, Q)$ satisfies the definition of a symplectic structure on $A_\Gamma$.
We will also demand that there is some $k$, $0\leq k\leq g+1$, such that $[a_i,a_i^*]\neq1$ for $i=1,\ldots, k$ and $[a_i,a_i^*]=1$ for $i=k+1,\ldots,g$.
In this subsection, we assume $Q\neq 0$.
Let $\supp Q\subset X$ denote the set of elements $a\in X$ with $a$ appearing in $Q$.

In this subsection, for $a$ in $L$, we will also use $a$ to denote the image of $a$ in $H_\Gamma$.
The images of the elements of $X$ give a basis for $H_\Gamma$ which we also call $X$.
By declaring $X$ to be orthonormal, we determine an inner product $\langle-,-\rangle:H_\Gamma\times H_\Gamma\to\Z$.
For $a,b\in L$ with $a\geq b$, let $E_{a,b}\in \Aut H_\Gamma$ denote the linear transvection (the row operation) mapping
\[b\mapsto b+a\] 
and fixing the images of all elements of $X$ different from $\pg{a}$. 
For $a\in L$, let $N_a\in\Aut H_\Gamma$ denote the inversion with respect to $a$, which maps 
\[a\mapsto -a\]
and fixes the images of all elements of $X$ different from $\pg{a}$.

A \emph{standard dominated $Q$--transvection} (or \emph{$Q$--transvection} for short) is an element of $\Aut H_\Gamma$ of one of the two following forms:
\begin{enumerate}
\item $E_{a,a^*}$, where $a\in\supp Q$ and $a\geq a^*$,
\item $E_{a,b}E_{b^*,a^*}^{-1}$ where $a,b\in\supp Q$, $\pg{a^*}\neq\pg{b}$, $a\geq b$ and $b^*\geq a^*$.
\end{enumerate} 
The \emph{$Q$--inversion} with respect to $a\in\supp Q$ is $N_aN_{a^*}$.
Note that a standard $Q$--transvection is not necessarily a transvection, but it is in some sense the closest thing to a transvection that preserves $Q$.
In the case that $\Gamma$ is a complete graph, the standard dominated $Q$--transvections are simply the standard symplectic transvections from classical linear algebra.

\begin{claim}
The $Q$--transvections and $Q$--inversions preserve $Q$.
\end{claim}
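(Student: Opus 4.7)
The plan is to verify the claim by direct computation, using the alternating property of $\wedge$ together with the crucial structural observation that, as $Q$ is a sum of wedges coming from a matching of $X$, each vertex in $\supp Q$ appears in \emph{exactly one} summand of $Q$.

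First I would handle the $Q$-inversion $N_a N_{a^*}$ for $a\in\supp Q$. Only basis vectors indexed by $\pg{a}$ and $\pg{a^*}$ are moved, and (by the uniqueness just noted) the only term of $Q$ involving either of these vertices is $[a]\wedge[a^*]$. Since each of $[a]$ and $[a^*]$ is negated, $(-[a])\wedge(-[a^*])=[a]\wedge[a^*]$, and every other summand of $Q$ is untouched. Next I would handle the type~(1) $Q$-transvection $E_{a,a^*}$: it sends $[a^*]\mapsto[a^*]+[a]$ and fixes the rest, so $[a]\wedge[a^*]\mapsto [a]\wedge([a^*]+[a])=[a]\wedge[a^*]$, using $[a]\wedge[a]=0$. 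Again, all other terms of $Q$ are unaffected.

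The type~(2) case is the real content. For $\alpha=E_{a,b}\,E_{b^*,a^*}^{-1}$, the linear maps $E_{a,b}$ and $E_{b^*,a^*}^{-1}$ act on disjoint basis vectors (they affect only $\pg{b}$ and $\pg{a^*}$ respectively), so they commute and can be applied independently to $Q$. The step I would be most careful about is confirming that the four vertices $\pg{a},\pg{a^*},\pg{b},\pg{b^*}$ are pairwise distinct: $\pg{a}\ne\pg{b}$ and $\pg{a^*}\ne\pg{b^*}$ are the standing domination/transvection hypotheses, $\pg{a^*}\ne\pg{b}$ is assumed outright, and the remaining inequalities $\pg{a}\ne\pg{a^*}$, $\pg{b}\ne\pg{b^*}$, $\pg{a}\ne\pg{b^*}$ follow from the fact that in a matching each vertex belongs to exactly one pair, so equality of any of these would force the two pairs $\{a,a^*\}$ and $\{b,b^*\}$ to coincide, contradicting $\pg{a}\ne\pg{b}$ or $\pg{a^*}\ne\pg{b}$.

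Granted that all four vertices are distinct, only the summands $[a]\wedge[a^*]$ and $[b]\wedge[b^*]$ of $Q$ are moved by $\alpha$, and they transform as
\[
[a]\wedge[a^*]\ \mapsto\ [a]\wedge\bigl([a^*]-[b^*]\bigr)=[a]\wedge[a^*]-[a]\wedge[b^*],
\]
\[
[b]\wedge[b^*]\ \mapsto\ \bigl([b]+[a]\bigr)\wedge[b^*]=[b]\wedge[b^*]+[a]\wedge[b^*].
\]
The two $[a]\wedge[b^*]$ cross-terms cancel, so $\alpha(Q)=Q$. There is no real obstacle here beyond the bookkeeping of the perfect-matching structure of $\supp Q$; the proof is essentially a linear algebra check that the inversion and domination conditions built into the definition of a $Q$-transvection are exactly what is needed to make the cross-terms cancel.
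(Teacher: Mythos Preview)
Your proof is correct and follows essentially the same approach as the paper: a direct bilinear computation showing that each generator fixes the relevant summand(s) of $Q$. The paper's proof is terser (it simply displays the three computations without discussing distinctness of vertices or which summands are affected), while you supply the bookkeeping that justifies why only the terms $[a]\wedge[a^*]$ and $[b]\wedge[b^*]$ move and why the four vertices are pairwise distinct; but the underlying argument is the same.
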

\begin{proof}
Note the following computations:
\[N_aN_{a^*}\cdot (a\wedge a^*)=(-a)\wedge(-a^*)=a\wedge a^*\]
\[E_{a,a^*}\cdot (a\wedge a^*)=a\wedge(a^*+a)=a\wedge a^*\]
\begin{align*}
E_{a,b}E_{b^*,a^*}^{-1}\cdot(a\wedge a^*+b\wedge b^* )&=
E_{a,b}\cdot(a\wedge a^*+b\wedge b^*-a\wedge b^*)\\
&=a\wedge a^*+b\wedge b^*
\end{align*}
The claim follows immediately.
\end{proof}

Let $G<\Aut H_\Gamma$ be the group:
\[G=\langle\{E_{a,b}|\text{$a\in X$, $b\in \supp Q$, and $a\geq b$}\}\cup\{N_b|b\in\supp Q\}\rangle\]
This is the image under the homology representation of the subgroup of $\AAGo$ that fixes each element of $(\supp w_0)^{\pm1}$.

This subsection is devoted to the proof of the following theorem.
\begin{theorem}\label{th:domspred}
The subgroup of $G$ stabilizing $Q$ is generated by the standard dominated $Q$--transvections and $Q$--inversions.
\end{theorem}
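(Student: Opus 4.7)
The plan is a ``symplectic row reduction with domination,'' generalizing the two classical facts that $\Sp(2g,\Z)$ is generated by symplectic transvections and that $\SL(n,\Z)$ is generated by elementary matrices. Given $\phi$ in the stabilizer of $Q$ in $G$, I aim to simplify $\phi$ using $Q$-transvections and $Q$-inversions until it becomes the identity, which by inversion produces the desired factorization.

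The first observation is that every generator of $G$ fixes $X \setminus \supp Q$ pointwise, so $\phi$ does too. Writing $V_Q$ for the subgroup of $H_\Gamma$ generated by $\supp Q$, the matrix of $\phi$ thus acts as the identity on the complementary subgroup generated by $X \setminus \supp Q$, but may have a ``bleed'' that records contributions from $V_Q$ into that complement. My first step is to show that the relation $\phi(Q) = Q$ forces this bleed to vanish: writing $\phi(a) = a' + p_a$ for $a \in \supp Q$ with $a' \in V_Q$ and $p_a$ supported on $X \setminus \supp Q$, expanding $\sum_i (a_i' + p_{a_i}) \wedge (a_i^{*\prime} + p_{a_i^*}) = Q$ and matching coefficients of monomials $x \wedge y$ in which at least one of $x,y$ lies outside $\supp Q$ should force each $p_a$ to zero.

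Once the bleed vanishes, $\phi$ restricts to an automorphism of $V_Q$ preserving the nondegenerate alternating form $Q$, and the problem becomes a constrained symplectic row reduction. I would induct on the number of symplectic pairs in $\supp Q$: for a distinguished pair $(a, a^*)$, the mixed $Q$-transvections $E_{b,a} E_{a^*, b^*}^{-1}$---whose defining constraints $b \geq a$ and $b^* \geq a^*$ are exactly what is needed to cancel a coefficient of $b$ in $\phi(a)$ against the matching coefficient of $b^*$ in $\phi(a^*)$ imposed by $\phi(Q) = Q$---together with the $Q$-inversion $N_a N_{a^*}$ and the self-transvection $E_{a, a^*}$, should reduce $\phi$ to one that fixes both $a$ and $a^*$, at which point the inductive hypothesis applied to the remaining pairs completes the argument.

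The main obstacle I foresee is in establishing the vanishing of the bleed. The domination relation constrains where its entries can sit, and coefficient matching in $\phi(Q) = Q$ can look under-determined when some $x \in X \setminus \supp Q$ dominates part but not all of $\supp Q$. I expect this step to require a careful case analysis organized by the pairing $()^*$ and the domination structure on $X$, and possibly an auxiliary induction interleaved with the symplectic row reduction on $V_Q$ (so that the bleed is cleared one pair at a time in tandem with the reduction on $V_Q$, rather than all at once up front).
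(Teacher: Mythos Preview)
Your overall plan---kill the ``bleed'' into $X\setminus\supp Q$, then run a constrained symplectic row reduction on $H_Q$ (your $V_Q$)---is exactly the paper's approach. But you have misidentified where the difficulty lies.

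The bleed step is the easy part and needs no case analysis. Since every generator of $G$ fixes each element of $X\setminus\supp Q$, the matrix of $\phi$ is block upper-triangular with the identity on the $\langle X\setminus\supp Q\rangle$ block; in particular the $H_Q$--to--$H_Q$ block is invertible. Writing the symplectic condition as $\phi J\phi^\top=J$ and reading off the mixed block gives $(\text{invertible})\cdot(\text{bleed})^\top=0$, so the bleed vanishes in one line (this is Lemma~\ref{le:justHQ}). Your coefficient-matching is the same computation in coordinates, and the reason it is not under-determined is precisely this invertibility, which you did not isolate.

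The genuine gap is in the row reduction: you never explain why the $Q$-transvections you need are actually permitted by the domination relation. Concretely, to attack a nonzero coefficient $\langle\phi(a),b\rangle$ you need $b\geq a$ and, for the paired move, $a^*\geq b^*$ (your ``$b^*\geq a^*$'' has this second inequality reversed, incidentally); nothing in your outline forces either. The paper supplies the missing ingredient in two lemmas: first, any matrix arising from $\AAGo$ satisfies $\langle\alpha_*b,a\rangle\neq0\Rightarrow a\geq b$ (Lemma~\ref{le:knowdom}, proved by induction on word length in the Laurence--Servatius generators); second, the symplectic identity $A=J^\top(A^{-1})^\top J$ transports this to the dual pair, yielding $b^*\geq a^*$ as well (Lemma~\ref{le:knowQdom}). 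The paper then orders the basis of $H_Q$ so that each pivot is $Q$-domination maximal among the remaining basis elements; every nonzero entry encountered is therefore at a position $Q$-domination equivalent to the current pivot, so $Q$-transvections in both directions are available and a Euclidean-style reduction goes through. Without Lemma~\ref{le:knowQdom} your reduction has no reason to succeed.
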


The basis $X$ lets us think of $\End H_\Gamma$ as matrices; in particular, it allows us to identify $\End H_\Gamma$ with $\otimes^2 H_\Gamma$, and gives us a transpose operation.
Express $\End H_\Gamma$ as three-by-three block matrices, with respect to the decomposition of $H_\Gamma$ as: 
\[\langle\supp w_0\rangle\oplus\langle a_{k+1},\ldots,a_g\rangle\oplus\langle a_{k+1}^*,\ldots,a_g^*\rangle\]
Define $J\in\End H_\Gamma$ by:
\[J:=\left(
\begin{array}{ccc}
0&0&0\\
0&0&-I_{g-k}\\
0&I_{g-k}&0
\end{array}
\right)\]
Then $J$ is the image of $Q$ under the map:
\[\Lambda^2 H_\Gamma \into \otimes^2 H_\Gamma \isomarrow \End H_\Gamma\]
Note that for any $A\in \Aut H_\Gamma$, we have $A\cdot Q=Q$ if and only if $AJA^\top=J$.
Also note that for any $a\in\supp Q$, we have $Ja=a^*$.
Let $H_Q<H_\Gamma$ be generated by the image of $\supp Q$.  

\begin{lemma}\label{le:justHQ}
If $A_0\in G$ and $A_0$ fixes $Q$, then $A_0$ leaves $H_Q$ invariant.
\end{lemma}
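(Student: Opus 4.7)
The plan is to use the endomorphism $J$ to detect $H_Q$ intrinsically. The crucial observation, immediate from the block-matrix description of $J$, is that $J$ regarded as a linear map $H_\Gamma \to H_\Gamma$ has image exactly $H_Q$: it annihilates $\langle \supp w_0\rangle$, and on $\supp Q$ it is the bijection $a\mapsto a^*$ (up to sign) between the two halves that is noted just before the lemma. So $H_Q = \img(J)$.

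Next I would record that the identification $\Lambda^2 H_\Gamma \into H_\Gamma\otimes H_\Gamma \cong \End H_\Gamma$ built from the orthonormal basis $X$ converts the diagonal $\Aut H_\Gamma$-action into $M\mapsto A_0 M A_0^\top$: a simple tensor $u\otimes v$ corresponds to the matrix $uv^\top$, and $A_0 u\otimes A_0 v \leftrightarrow (A_0 u)(A_0 v)^\top = A_0(uv^\top)A_0^\top$. Combined with the text's identity $A_0\cdot Q = Q \iff A_0 J A_0^\top = J$, the hypothesis becomes exactly this matrix equation.

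With these two ingredients in place, the proof is a one-liner: take images of both sides of $A_0 J A_0^\top = J$. The right-hand side has image $H_Q$ by the first step, and since $A_0^\top$ is invertible and thus surjective on $H_\Gamma$, the left-hand side has image $A_0(J(H_\Gamma)) = A_0(H_Q)$. Hence $A_0(H_Q) = H_Q$, the desired invariance.

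There is no real obstacle here; the only step that needs care is the bookkeeping for the isomorphism $\Lambda^2 H_\Gamma \subset H_\Gamma\otimes H_\Gamma \cong \End H_\Gamma$ and the resulting formula $M\mapsto A_0 M A_0^\top$ for the $\Aut H_\Gamma$-action. I would also flag, as a sanity check, that the hypothesis $A_0\in G$ is never used in the argument: every $A_0\in\Aut H_\Gamma$ fixing $Q$ preserves $\img(J) = H_Q$, so the lemma is really an intrinsic statement about the form $Q$, with $G$ only serving to specify where $A_0$ lives.
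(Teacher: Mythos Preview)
Your argument is correct, and it takes a genuinely different route from the paper's. The paper exploits the hypothesis $A_0\in G$ to write $A_0$ in block-upper-triangular form with respect to the decomposition $\langle\supp w_0\rangle\oplus H_Q$ (since every generator of $G$ fixes each element of $\supp w_0$), and then expands the relation $A_0JA_0^\top=J$ in blocks to extract a linear system forcing the off-diagonal blocks $E,F$ to vanish. Your approach instead bypasses the block structure entirely by recognizing $H_Q$ intrinsically as $\img(J)$ and reading off $A_0(\img J)=\img J$ directly from $A_0JA_0^\top=J$.

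Your route is shorter and, as you observe, strictly more general: it shows that \emph{any} $A_0\in\Aut H_\Gamma$ fixing $Q$ preserves $H_Q$, with no use of the group $G$. The paper's computation, on the other hand, yields slightly more explicit information (that $A_0$ is block-diagonal, not merely $H_Q$-preserving), though in this case that extra content is already contained in the hypothesis $A_0\in G$. For the purposes of the subsequent argument---restricting the row-reduction to $H_Q$---your conclusion is exactly what is needed.
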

\begin{proof}
It follows from the definition of $G$ that for some matrices $A$, $B$, $C$, $D$, $E$, $F$, we have:
\[A_0=\left(
\begin{array}{ccc}
I_{2k}&E&F\\
0&A&B\\
0&C&D
\end{array}
\right)\]
Since $A_0JA_0^\top=J$, we can deduce that:
\[\left(\begin{array}{cc}A&B\\C&D\end{array}\right)\cdot
\left(\begin{array}{c}-F^\top\\E^\top\end{array}\right)=
\left(\begin{array}{c}0\\0\end{array}\right)\]
But since the matrix $\binom{A\ B}{C\ D}$ is a diagonal block of a block-upper-triangular matrix, it is invertible, and therefore $F=0$ and $E=0$.
\end{proof}

By virtue of Lemma~\ref{le:justHQ}, we restrict our entire argument from $H_\Gamma$ to $H_Q$.
We also use the symbols $Q$ and $J$ to represent their respective restrictions to $H_Q$.
Note that $J$ restricted to $H_Q$ is invertible.
The element $Q$ is a standard symplectic form, when considered as an element of $\Lambda^2 H_Q$.

\begin{lemma}\label{le:knowdom}
If $\alpha\in \AAGo$, then for any $a,b\in X$, we have
$\langle \alpha_*b,a\rangle\neq 0$ implies $a=b$ or $a\geq b$.
\end{lemma}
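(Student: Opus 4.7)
The plan is to argue by induction on the length of $\alpha$ as a word in the generators of $\AAGo$, namely the partial conjugations, dominated transvections, and inversions (and their inverses, each of which is of the same type). First I would verify the base case by computing the homology image of each generator in the basis $X$. Partial conjugations act trivially on $H_\Gamma$, so their matrices are the identity. An inversion $N_x$ acts as $\mathrm{diag}(1,\ldots,1,-1,1,\ldots,1)$, nonzero only on the diagonal. A dominated transvection $\tau_{x,y}$ (with $x,y\in L$, $\pg x\geq \pg y$, $\pg x\neq \pg y$) sends $\pg y\mapsto\pg y\pm \pg x$ in $H_\Gamma$ and fixes every other basis element; its matrix has $\pm 1$'s on the diagonal and a single off-diagonal entry in position $(\pg x,\pg y)$, and that position witnesses $\pg x\geq \pg y$. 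So for each generator $\beta$, whenever $\langle \beta_* b,a\rangle\neq 0$ we have either $a=b$ or $a\geq b$.

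For the inductive step, suppose $\alpha=\beta\gamma$ with both $\beta$ and $\gamma$ satisfying the property. Expanding in the basis $X$ gives
\[\langle\alpha_* b,a\rangle=\sum_{c\in X}\langle\beta_* c,a\rangle\,\langle\gamma_* b,c\rangle,\]
so if the left side is nonzero, some $c\in X$ makes both factors nonzero. By the inductive hypothesis, for that $c$ we have $a=c$ or $a\geq c$, and $c=b$ or $c\geq b$. Combining these four cases and using reflexivity and transitivity of the domination relation on $X$ yields $a=b$ or $a\geq b$, completing the induction. I expect no real obstacle: the only point requiring attention is that inverses of the generators are again of the same form (the inverse of $\tau_{x,y}$ is $\tau_{x^{-1},y}$, and inversions are involutions), which ensures that writing $\alpha$ as a word in the generators does not introduce matrices whose support pattern fails the domination condition.
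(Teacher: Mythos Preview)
Your proof is correct and follows essentially the same approach as the paper: induction on the word length of $\alpha$ in the generators of $\AAGo$, using transitivity of domination for the inductive step. The only difference is cosmetic: the paper peels off one generator at a time and does a small case analysis on its type, whereas you package the inductive step as closure under composition via the matrix-product identity $\langle\alpha_*b,a\rangle=\sum_{c}\langle\beta_*c,a\rangle\langle\gamma_*b,c\rangle$; both arguments reduce to the same transitivity check.
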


\begin{proof}
We induct on the length of $\alpha$ in terms of the generators of $\AAGo$.
The assertion is obvious if the length is zero.
Assume it is true for $\beta$ and that $\alpha=\beta\gamma$ where $\gamma$ is one of the generators of $\AAGo$.
If $\gamma$ is an inversion or a partial conjugation, then lemma follows.
Suppose $\gamma=\tau_{c,d}$ for some $c,d\in L$ with $c\geq d$.
Then $\langle \alpha_*b,a\rangle\neq 0$ implies either that
$\langle \beta_*b,a\rangle\neq 0$ or that $\pg{a}=\pg{c}$ and $\langle \beta_*b,d\rangle\neq 0$.
In the first case, the lemma follows.
In the second case, we have $a\geq d$ and also $d\geq b$ by inductive assumption.
\end{proof}
\newcommand\qgeq{\;\dot{\geq}\;}
\newcommand\qsim{\;\dot{\sim}\;}

We define a binary relation on $\supp Q$ called \emph{$Q$--domination}: $a\qgeq b$ if $\pg{a}\neq\pg{b^*}$ and $a\geq b$ and $b^*\geq a^*$, or if $\pg{a}=\pg{b^*}$ and $a\geq b$.
We define \emph{$Q$--domination equivalence} by: $a\qsim b$ if $a\qgeq b$ and $b\qgeq a$.
Note that we have a $Q$--transvection sending $b$ to $b+a$ only if $a\qgeq b$.

\begin{lemma}\label{le:knowQdom}
If $A\in G$ and $A$ fixes $Q$, then for any $a,b\in \supp Q$, we have that
$\langle Ab,a\rangle\neq 0$ implies $a=b$ or $a\qgeq b$.
\end{lemma}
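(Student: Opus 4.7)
The plan is to lift $A$ to some $\alpha\in\AAGo$ (possible by the definition of $G$) and apply Lemma~\ref{le:knowdom} twice: once to $\alpha$ itself to control $\langle Ab,a\rangle$, and once to $\alpha^{-1}$ to control a symplectically dual pairing $\langle A^{-1}a^*,b^*\rangle$. The hypothesis $A\cdot Q=Q$ will be used precisely to relate these two pairings.

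First, Lemma~\ref{le:knowdom} applied to $\alpha$ gives $a=b$ or $a\geq b$ from $\langle Ab,a\rangle\neq 0$; assuming $a\neq b$, we get $a\geq b$. If $\pg{a}=\pg{b^*}$, this already suffices for $a\qgeq b$ by the definition of $Q$--domination. Otherwise $\pg{a}\neq\pg{b^*}$, and it remains to prove $b^*\geq a^*$.

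The key identity is
\[\langle Ab,a\rangle=\langle A^{-1}a^*,b^*\rangle.\]
To derive it, observe that $AJA^\top=J$ together with $J^{-1}=-J$ and $J^\top=-J$ gives $A^\top=-JA^{-1}J$; combined with $Ja=a^*$ and $Jb=b^*$ this yields
\[\langle Ab,a\rangle=\langle b,A^\top a\rangle=-\langle b,JA^{-1}a^*\rangle=\langle Jb,A^{-1}a^*\rangle=\langle b^*,A^{-1}a^*\rangle.\]
Applying Lemma~\ref{le:knowdom} to $\alpha^{-1}\in\AAGo$ then forces $\pg{b^*}=\pg{a^*}$ or $\pg{b^*}\geq\pg{a^*}$. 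The first possibility would force $b=\pm a$ in $H_\Gamma$ (by applying $J$), hence $a=b$ since $a,b\in X$, contradicting our assumption; so $b^*\geq a^*$, completing the proof that $a\qgeq b$.

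The main subtlety is simply keeping the signs correct in the symplectic calculation; once the identity above is in place, the lemma amounts to the observation that a symplectic automorphism preserves matrix entries in symmetric pairs, so the one-sided domination supplied by Lemma~\ref{le:knowdom} upgrades automatically to the two-sided $Q$--domination.
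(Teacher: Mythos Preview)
Your proof is correct and follows essentially the same approach as the paper: both arguments use the identity $AJA^\top=J$ (on $H_Q$, where $J$ is invertible) to transform $\langle Ab,a\rangle$ into $\langle b^*,A^{-1}a^*\rangle$, and then apply Lemma~\ref{le:knowdom} twice (once to $\alpha$ and once to $\alpha^{-1}$) to obtain $a\geq b$ and $b^*\geq a^*$. Your version is slightly more explicit about the case distinction in the definition of $Q$--domination and about ruling out $\pg{a^*}=\pg{b^*}$, but the underlying argument is identical.
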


\begin{proof}
If $\langle Ab,a\rangle\neq 0$, then by Lemma~\ref{le:knowdom}, we have $a\geq b$.
Since $AJA^\top=J$, we know $A=J(A^\top)^{-1}J^{-1}=J^\top(A^{-1})^\top J$.
So 
\[0\neq \langle J^\top(A^{-1})^\top Jb,a\rangle=\langle b^*,A^{-1}a^*\rangle\]
which implies (again by Lemma~\ref{le:knowdom}) that $b^*\geq a^*$.
\end{proof}

Now we will reassign the indices for our basis for $H_Q$.
Assume we have labeled some vertices $\{x_1,\ldots,x_i,y_1,\ldots,y_i\}\subset(\supp Q)^{\pm1}$.
Then we choose $x_{i+1}$ to be $Q$--domination maximal among the elements of $\supp Q$ not yet labeled as $x_j$ or $y_j$.
Set $y_{i+1}=Jx_{i+1}$.
By construction, we deduce that $\pg{y_{i+1}}$ is not in $\{\pg{x_1},\ldots,\pg{x_i},\pg{y_1},\ldots,\pg{y_i}\}$.
We proceed this way until we have constructed a basis.

We will now prove Theorem~\ref{th:domspred} by exhibiting a row-reduction algorithm.
This algorithm will differ from the usual integral symplectic row-reduction algorithm in that we have to check at each step that the $Q$--domination relation allows us to use a given $Q$--transvection. 
\begin{proof}[Proof of Theorem~\ref{th:domspred}]
Let $A\in G$ fix $Q$.  
By Lemma~\ref{le:justHQ}, we think of $A$ as being in $\Aut H_Q$.
Assume inductively that we have already row-reduced $A$ by applying standard $Q$--transvections and inversions to get a matrix $A_i$ (for $1\leq i\leq g-k$) such that for $j<i$, we have:
\begin{equation}\label{eq:matids1}A_ix_j=x_j\end{equation}

Since $A$ is symplectic, $A_i$ is symplectic, so  $A_i=J^\top(A_i^{-1})^\top J$, and
\begin{equation}\label{eq:matids2}
\langle A_ix_i,y_j\rangle=-\langle y_i,A_i^{-1}x_j\rangle=-\langle y_i,x_j\rangle=0\end{equation}
for any $j<i$.
If we have any $j\geq i$ with $\langle A_i x_i,x_j\rangle\neq 0$, then by Lemma~\ref{le:knowQdom}, we have $x_j\qgeq x_i$; 
since $x_i$ is maximal among $\{x_i,\ldots, x_{g-k}\}$, we know $x_i\qsim x_j$.
Similarly, if $j\geq i$ with $\langle A_i x_i,y_j\rangle\neq 0$, we know $x_i\dot\sim y_j$.

{\bf Step 1:} We consider all the indices $j\geq i$ such that both $\langle A_i x_i,x_j\rangle\neq 0$ and $\langle A_i x_i,y_j\rangle\neq 0$.
For any such $j$, we have $x_j\dot\sim y_j$, and by repeatedly applying the row operations $E_{x_j,y_j}$, $E_{y_j,x_j}$ and their inverses (which are $Q$--transvections) according to the Euclidean algorithm, we can reduce $A_i$ to a matrix $\hat A_i$ in which either $\langle \hat A_i x_i,x_j\rangle= 0$ or $\langle \hat A_i x_i,y_j\rangle= 0$.
By repeating this step for all such indices $j$, we assume we have reduced $A$ to $A_i'$ in which for each $j\geq i$, either $\langle A'_i x_i,x_j\rangle= 0$ or $\langle A'_i x_i,y_j\rangle= 0$.
Note that these operations do not affect the columns of $A_i$ before column of $x_i$, so equations~(\ref{eq:matids1}) and~(\ref{eq:matids2}) both still hold with $A_i'$ instead of $A_i$.

{\bf Step 2:} We find an element $a\in\{x_i,\ldots,x_{g-k},y_i,\ldots,y_{g-k}\}$ which maximizes $|\langle A'_i x_i,a\rangle|$ for $a$ in this set.
Since the determinant of $A'_i$ is nonzero, we can deduce from the form of $A'_i$ that this maximum is nonzero.
If this $a$ is the unique such element making this value nonzero, we move on to the next step.
Otherwise, there is some other $b\in\{x_i,\ldots,x_{g-k},y_i,\ldots,y_{g-k}\}$ with $|\langle A'_i x_i,b\rangle|\neq 0$.
Since these matrix entries are nonzero, we know that $a\dot\sim b$.
Since by the first step, we know that $\langle A'_i x_i,a^*\rangle=\langle A'_i x_i,b^*\rangle=0$, we know that the row operation $E_{b,a}E_{b^*,a^*}^{-1}$ and its inverse only change the column of $x_i$ in $A'_i$ by adding plus or minus the $b$--entry to the $a$--entry.
Further, this does not alter the column of $x_j$ in $A'_i$ for any $j<i$.
This step reduces either the maximum of $|\langle A'_i x_i,a\rangle|$ for $a\in\{x_i,\ldots,x_{g-k},y_i,\ldots,y_{g-k}\}$, or it reduces the number of elements realizing this maximum absolute value.
Either way, by repeatedly applying this step, we arrive at a matrix $A_i''$ such that there is a unique $a\in \{x_i,\ldots,x_{g-k},y_i,\ldots,y_{g-k}\}$ with $|\langle A''_i x_i,b\rangle|\neq 0$.
Again, the equations~(\ref{eq:matids1}) and~(\ref{eq:matids2}) both still hold with $A_i''$ instead of $A_i$.

{\bf Step 3:} 
We now have a unique $a\in\{x_i,\ldots,x_{g-k},y_i,\ldots,y_{g-k}\}$ with $\langle A''_i x_i,a\rangle\neq 0$.
By the form of $A_i''$ and the fact that its determinant is $1$, we deduce that
$|\langle A''_i x_i,a\rangle|=1$.

If $\pg{a}\neq \pg{x_i}$, then we know $a\qsim x_i$.
In this case, if $\pg{a}\neq\pg{y_i}$ we can apply the product of $Q$--transvections
\[(E_{a,x_i}E_{x_i^*,a^*}^{-1})(E_{x_i,a}^{-1}E_{a^*,x_i^*})(E_{a,x_i}E_{x_i^*,a^*}^{-1})\]
which sends $x_i$ to $a$, $a$ to $-x_i$, $y_i$ to $a^*$, and $a^*$ to $-y_i$ while fixing all other elements of our basis.
Otherwise,  $\pg{a}=\pg{y_i}$ and we can apply the product of $Q$--transvections
\[E_{y_i,x_i}E_{x_i,y_i}^{-1}E_{y_i,x_i}\]
which sends $x_i$ to $y_i$ and $y_i$ to $x_i^{-1}$ while fixing all other elements of our basis.
In any event, perhaps by applying some $Q$--transvections, we may assume that $\pg{a}=\pg{x_i}$.
Possibly after applying a $Q$--inversion, we may assume that
$\langle A''_i x_i,x_i\rangle=1$.
So for $j\geq i$, we have $\langle A''_ix_i,y_j\rangle=0$ and $\langle A''_ix_i,x_j\rangle$ is $0$ if $i\neq j$ and $1$ if $i=j$.

{\bf Step 4:}
For each $j<i$ with $\langle A''_i x_i,x_j\rangle\neq 0$, we know that $x_j\qgeq x_i$, and we may apply the row operation $E_{x_j,x_i}E_{y_i,y_j}^{-1}$.
Since all of the other relevant entries in the column of $x_i$ are zero, the only effect of this operation is to add $1$ to the $x_j$--entry.
Of course, by applying some power of this operation, we can delete this entry.
By applying this step repeatedly, we arrive at a matrix that satisfies the recursion hypotheses for $i+1$ and we can go back to step 1.

We recurse through these steps $g-k$ times and arrive at a matrix $A_{g-k+1}$ satisfying Equations~(\ref{eq:matids1}) and~(\ref{eq:matids2}) for $i=g-k+1$.
However, these conditions imply that $A_{g-k+1}$ is the identity matrix.

Since we reduced the arbitrary $A$ to the identity matrix by repeatedly applying $Q$--transvections and $Q$--inversions, we have shown that these elements generate the stabilizer of $Q$ in $G$.
\end{proof}

\subsection{Automorphisms fixing a surface relator}
Recall the bijection $*:L\to L$ with $(a^*)^*=a^{-1}$ for all $a\in L$, and the surface relator:
\[w_0=[a_1,b_1]\cdots[a_k,b_k]\]
Note that $|w_0|=4k$.
In this subsection, we assume that $|w_0|>0$.
Note that if $a\in\supp w_0$, then either $(a^*)^{-1}aa^*$ or $a^*a^{-1}(a^*)^{-1}$ is a subsegment of $w_0$.

From here on, we will use symbols like $w$ to refer to a word or the group element it determines, and we will use $[w]$ to refer to the cyclic word determined by $w$ or to the conjugacy class of $w$.

The goal of this subsection is to prove that we can peak-reduce an arbitrary automorphism in $\AAG$ (not just in $\langle \Omega_\ell\rangle$) if we are only reducing peaks with respect to $[w_0]$ (Theorem~\ref{th:prw0}).
In order to do this, we split an automorphism fixing $[w_0]$ into its long-range and short-range parts, and we will analyze this short-range part (Sublemma~\ref{sl:moveirrel} and Lemma~\ref{le:specialsrform}).
Once we understand the short-range part, we will be able to absorb all of the peak-forming short-range elements into general Whitehead automorphisms (Lemma~\ref{le:prone}).

We define the \emph{pure long-range Whitehead automorphisms} $\Omega_\ell^0$ to be $\Omega_\ell \cap\AAGo$.
We will use a slight refinement of \partref{it:lrwhalgs}: if $[w]$ is a conjugacy class and $\alpha\in\langle\Omega_\ell^0\rangle$, then $\alpha$ can be peak-reduced with respect to $[w]$ by elements of $\Omega_\ell^0$ (see Remark~\ref{re:purewhalg} of Day~\cite{fpraag}).

Our first goal is to show that the action of $\AAGo$ cannot shorten $[w_0]$ or shrink its support.
This relies strongly on the structure of $w_0$, which we exploit through the following two sublemmas. 

\begin{sublemma}\label{sl:hitCform}
Suppose $C$ is a nonempty adjacent domination equivalence class of $\Gamma$ and suppose $\gamma$ is a product of transvections and partial conjugations with multipliers in $C$.
Let $j=|C\cap\supp w_0|$ and let $m=(k-j)$.
There are letters $f_1,\ldots,f_j\in (\supp w_0\cap C)^{\pm1}$, $x_1,\ldots,x_j$, $c_1,\ldots,c_m$ and $d_1,\ldots,d_m$ in $L-C^{\pm1}$ and words $u_i=\gamma(f_i)$, $r_1,\ldots,r_m$, $s_1,\ldots,s_m$ and $t_1,\ldots,t_m$ in $C^{\pm1}$ such that $\gamma([w_0])$ is represented by a cyclic word given as a graphically reduced product of the words
\[x_1u_1x_1^{-1},\ldots,x_ju_jx_j^{-1}\]
with the words
\[c_1r_1d_1s_1c_1^{-1}t_1d_1^{-1},\ldots,c_mr_md_ms_mc_m^{-1}t_md_m^{-1}\]
and some elements of $C^{\pm1}$ in some order.
Further, if $j>0$ then these $\{u_i\}_i$ are all nontrivial, and their product is nontrivial.
\end{sublemma}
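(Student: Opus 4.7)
The plan is to compute $\gamma([w_0])$ commutator by commutator, exploiting the fact that $C$ being an adjacent domination equivalence class forces $\langle C\rangle$ to be free abelian and all elements of $C$ to share a common star in $\Gamma$. Since $[a_i,b_i]\neq 1$ and any two elements of the clique $C$ commute, each commutator of $w_0$ contains at most one letter from $C$, which partitions the $k$ commutators into $j$ ``case-B'' commutators (one vertex in $C$) and $m=k-j$ ``case-C'' commutators (no vertices in $C$).

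First I would establish a normal form by induction on the length of $\gamma$ as a product of transvections $\tau_{c,y}$ and partial conjugations $c_{c,Y}$ with $c\in C^{\pm 1}$: for every $x\in X$, the image $\gamma(x)$ equals $u_1\cdot x\cdot u_2$ with $u_1,u_2\in\langle C\rangle$ and exactly one occurrence of $x$; in particular $\gamma(c)\in\langle C\rangle$ whenever $c\in C$. The base case is immediate from the definitions of these generators, and the inductive step goes through because applying a single such generator to the normal form only modifies the $\langle C\rangle$-factors while leaving the unique $x$-occurrence intact.

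Next I would apply $\gamma$ to each commutator and cyclically simplify. In case~B, say $a_i\in C$ and $b_i\notin C$; the common-star observation shows $b_i$ fails to commute with every element of $C$, while the $\langle C\rangle$-factors commute freely among themselves. A direct calculation then reduces $\gamma([a_i,b_i])$ cyclically to $[\gamma(a_i),b_i]$, which rewritten as $(b_i\cdot\gamma(a_i)^{-1}\cdot b_i^{-1})\cdot\gamma(a_i)$ has the required shape: the first factor is $x_iu_ix_i^{-1}$ with $x_i=b_i$ and $u_i=\gamma(a_i^{-1})$, while the trailing $\gamma(a_i)\in\langle C\rangle$ contributes to the ``elements of $C^{\pm 1}$'' portion. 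In case~C, each commutator $[c,d]$ with $c,d\notin C$ reduces analogously: the analogous calculation yields $\gamma([c,d])$ cyclically equal to $c\cdot r\cdot d\cdot s\cdot c^{-1}\cdot t\cdot d^{-1}$ times an element of $\langle C\rangle$, where $r,s,t$ are explicit products of the normal-form $\langle C\rangle$-factors of $\gamma(c)$ and $\gamma(d)$. Concatenating all these contributions gives $\gamma([w_0])$ as a cyclic word, with the leftover $\langle C\rangle$-terms between adjacent commutators combining into the extra ``elements of $C^{\pm 1}$ in some order''.

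For the nontriviality claim, I would choose each $f_i$ to be the vertex of $\supp w_0\cap C$ sitting inside the $i$th case-B commutator (with the inverse sign dictated by the cyclic reduction above). The surface-relator condition forces the vertices $a_i,b_i$ to be pairwise distinct, so the $f_i$ correspond to $j$ distinct generators of the free abelian group $\langle C\rangle$, making $\prod f_i\neq 1$ in $A_\Gamma$; since $\gamma\in\AAG$ is an automorphism, both the individual $u_i=\gamma(f_i)$ and their product $\prod u_i=\gamma(\prod f_i)$ are nontrivial. The main obstacle is the bookkeeping in the concatenation step --- verifying that the leftover $\langle C\rangle$-debris can always be grouped into ``elements of $C^{\pm 1}$ in some order'' without triggering further graphical collapse in the cyclic word. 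This is handled by the separation observation: each $b_i$ from case~B, together with the appropriate $c,d$ in case~C, is shielded by the commutator structure, and in the generic situation none of these non-$C$ letters commute with any $c'\in C$, so the $\langle C\rangle$-fragments cannot slide past them to cancel within the commutator blocks.
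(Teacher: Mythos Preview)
Your approach is a valid alternative to the paper's, but it is organized differently and has one soft spot worth tightening.

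\textbf{Comparison with the paper.} The paper argues by induction on the length of $\gamma$ as a product of Whitehead automorphisms: it first writes $w_0$ itself in the claimed form (assigning the $x_p,f_p$ for the case-B commutators and the $c_p,d_p$ for the case-C commutators with $r_p=s_p=t_p=1$), and then shows that applying a single $\alpha=(A,a)$ with $a\in C^{\pm1}$ preserves the form --- in particular, $\alpha(x_iu_ix_i^{-1})$ is either $x_i\alpha(u_i)x_i^{-1}$ or $a^{-1}x_i\alpha(u_i)x_i^{-1}a$, so the new $u_i$ is $\alpha(u_i)=\gamma(f_i)$ automatically. Your approach instead first proves the normal form $\gamma(x)=u_1xu_2$ with $u_1,u_2\in\langle C\rangle$ for $x\notin C$, then computes $\gamma([a_i,b_i])$ block by block and concatenates. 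Both reach the same destination; the paper's induction has the advantage that graphical reducedness is maintained at every step, so no separate bookkeeping is needed at the end.

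\textbf{Two points to fix.} First, in your case~B you say $\gamma([a_i,b_i])$ reduces ``cyclically'' to $[\gamma(a_i),b_i]$ and then rotate to $(b_i\gamma(a_i)^{-1}b_i^{-1})\gamma(a_i)$. A cyclic rotation of a single factor is not legitimate here, since this is only one piece of $\gamma(w_0)$. The correct computation is $\gamma([a_i,b_i])=v_1\gamma(a_i)\cdot(b_i\gamma(a_i)^{-1}b_i^{-1})\cdot v_1^{-1}$, which already exhibits $x_iu_ix_i^{-1}$ sandwiched between two $\langle C\rangle$-words; your identification $x_i=b_i$, $f_i=a_i^{-1}$ is right, but the leading debris is $v_1\gamma(a_i)$, not a trailing $\gamma(a_i)$. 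Second, your final graphical-reducedness argument is only stated for the ``generic'' situation where every $c_j,d_j$ fails to commute with $C$. This need not hold: nothing prevents a case-C vertex from lying in $\st(C)$. The fix is easy --- if, say, $c_j$ commutes with $C$, then all $\langle C\rangle$-letters neighboring $c_j$ can be slid to one side and absorbed into the adjacent $\langle C\rangle$-block (and if both $c_j,d_j$ commute with $C$ one checks directly that $\gamma([c_j,d_j])=[c_j,d_j]$) --- but you should say this rather than appeal to genericity. The paper's inductive organization sidesteps this issue entirely.
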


\begin{proof}
We will prove this statement by induction on the length of $\gamma$ as a product of Whitehead automorphisms.
First we discuss the base case.
For a factor $[a_i,b_i]$ of $w_0$, not both of $a_i$ and $b_i$ are in $C$ since $a_i$ and $b_i$ do not commute.
If a given $a_i$ is in $C^{\pm1}$, we set the next available $x_p=b_i$ and set $u_p=f_p=a_i^{-1}$.
Similarly, if $b_i$ is in $C^{\pm1}$, we set the next available $x_p=a_i$ and set $u_p=f_p=b_i$.
For each $i$ with both $\pg{a_i},\pg{b_i}\notin C$, we set the next available $c_p=a_i$ and set $d_p=b_i$.
We take each $r_p$, $s_p$, and $t_p$ to be the empty word.
This proves the base case $|\gamma|=0$.

Now suppose that $w$ satisfies the conclusions of the lemma and let $\alpha$ be a Whitehead automorphism with multiplier $a\in C^{\pm1}$, so that $\gamma=\alpha\gamma'$ for some $\gamma'$.
Then for each element $x$ of $X-C$, $\alpha(x)$ contains a single instance of $x$, and $\supp\alpha(x)\subset \{\pg{a},x\}$.
Then the same choices of $\{x_i\}_i$, $\{c_i\}_i$ and $\{d_i\}_i$ elements will work.
For each $i$, it is possible to choose new words $r_i$, $s_i$ and $t_i$ that will work based on the old words and $\alpha(c_i)$ and $\alpha(d_i)$.
Note that regardless of what $\alpha(x_i)$ is, $\alpha(x_iu_ix_i^{-1})$ is $x_i\alpha(u_i)x_i^{-1}$ or $a^{-1}x_i\alpha(u_i)x_i^{-1}a$.
This means that our $\alpha(u_i)$ will work as our new $u_i$, and since $u_i=\gamma'(f_i)$, we have $\alpha(u_i)=\gamma(f_i)$.
This means that we can write $\alpha(w)$ in the desired form.

Note that since each original $f_i$ is nontrivial, each $\gamma(f_i)$ is nontrivial, and since the product $f_1\cdots f_j$ is nontrivial, the product $\gamma(f_1)\cdots\gamma(f_j)$ is nontrivial.
\end{proof}
\begin{sublemma}\label{sl:shortenwithC}
Suppose $C$ is an adjacent domination equivalence class of $\Gamma$ with $|C|>1$, and suppose $\gamma$ is a product of transvections and partial conjugations with multipliers in $C$.
Then if $\alpha\in\Omega_\ell$ shortens $\gamma([w_0])$, then $\alpha=(A,a)$ for some $a\in C^{\pm1}$.

Further, no $\alpha\in\Omega_\ell$ can shorten $[w_0]$.
\end{sublemma}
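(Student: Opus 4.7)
The plan is to first dispatch type~(1) Whitehead automorphisms in $\Omega_\ell$: since they are products of inversions and graphic automorphisms, they act on $L$ by signed permutations and preserve the length of every cyclic word. So we may assume $\alpha=(A,a)$ is type~(2) with $A\cap\lkl{a}=\emptyset$. Recall that applying $(A,a)$ inserts between each adjacent pair of letters $y,z$ of a cyclic word an $a$ (if $y\in A$ and $z^{-1}\notin A$), an $a^{-1}$ (if $y\notin A$ and $z^{-1}\in A$), or nothing. Each inserted letter adds $1$ to the length unless it can cancel with an oppositely-signed inserted letter after commuting through intermediate letters whose vertex lies in $\lk(\pg{a})$. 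Since $\alpha$ is long-range, no letter of $A\cup A^{-1}$ commutes with $a$, so any such cancellation must proceed entirely through letters outside $A\cup A^{-1}$.

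Now suppose $a\notin C^{\pm 1}$ and that $\alpha$ shortens $\gamma([w_0])$. Using Sublemma~\ref{sl:hitCform}, I would write $\gamma([w_0])$ as a graphically reduced cyclic word built from blocks $x_iu_ix_i^{-1}$ (with $u_i$ nonempty in $C^{\pm 1}$), commutator-like blocks $c_ir_id_is_ic_i^{-1}t_id_i^{-1}$, and isolated $C^{\pm 1}$ letters. I would analyze each block in turn and show that its contribution to the length change is non-negative. For a block $x_iu_ix_i^{-1}$, any $a^{\pm 1}$ inserted at the boundary of $x_i$ must cross the nonempty word $u_i\in C^{\pm 1}$ to cancel with an $a^{\mp 1}$ on the other side; since $a\notin C^{\pm 1}$, this crossing requires every letter of $u_i$ to lie in $\lkl{a}$, after which a short case analysis on the $A$-membership of $x_i$ and $x_i^{-1}$ rules out a net decrease. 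For the commutator-like blocks, the braided pattern of $c_i$ and $d_i$, together with the fact that these letters can lie in $A$ only if they do not commute with $a$, prevents the inserted $a$'s from fully cancelling. Isolated $C^{\pm 1}$ letters are fixed by $\alpha$ when $a\notin C^{\pm 1}$ and thus contribute nothing. Summing these contributions yields a contradiction.

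The final assertion that no $\alpha\in\Omega_\ell$ shortens $[w_0]$ follows by taking $\gamma=1$: the decomposition of Sublemma~\ref{sl:hitCform} reduces to the individual commutator factors $[a_i,b_i]$ comprising $w_0$ (together with single letters of the supports), and the same block-by-block analysis, now using $[a_i,b_i]\neq 1$ so that $a_i$ and $b_i$ do not commute, directly shows that $\alpha$ cannot decrease the length of $[w_0]$. The main obstacle is the combinatorial casework in the block-by-block length computation: keeping simultaneous track of which letters lie in $A$, in $A^{-1}$, and in $\lkl{a}$, and verifying in every case that the long-range hypothesis $A\cap\lkl{a}=\emptyset$ precludes enough cancellation to produce a net decrease.
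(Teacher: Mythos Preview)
Your plan has one false assertion and one structural gap, and it misses the key reduction the paper uses.

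The false assertion is that ``isolated $C^{\pm 1}$ letters are fixed by $\alpha$ when $a\notin C^{\pm 1}$.'' Nothing prevents some $c\in C$ from having both $c,c^{-1}\in A$, in which case $\alpha(c)=a^{-1}ca$. The paper first establishes a dichotomy you skip: since $|C|>1$, pick distinct $b,c\in C$; if $a\geq b$ then $a\in\lkl{c}\subset\lkl{b}$, so either $a$ commutes with every element of $C$ (and then long-range forces $C^{\pm1}\cap A=\emptyset$), or $a$ dominates no element of $C$ (and then $\alpha$ may still conjugate $C$). In the second branch your claim is simply wrong, and this branch is where most of the work lies.

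The structural gap is the block-by-block additivity. The length change of $(A,a)$ on a cyclic word is a sum over \emph{adjacent pairs}, and adjacent pairs straddle block boundaries; an inserted $a$ at the right edge of one block can cancel with an inserted $a^{-1}$ at the left edge of the next (possibly after commuting through intervening $C^{\pm1}$ letters in the conjugation branch above). You never explain how boundary contributions are allocated so that the block contributions sum correctly.

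The paper avoids both problems by a sharper reduction: since $\pg{a}\notin C$, Sublemma~\ref{sl:hitCform} shows that $a$ occurs exactly twice in the reduced cyclic representative $w$ (once as $a$, once as $a^{-1}$, since $w$ is homologically trivial). Writing $w=v_1av_2a^{-1}$, any shortening must delete both occurrences and introduce none, which forces the exact equalities $\alpha(v_1)=v_1$ and $\alpha(v_2)=a^{-1}v_2a$. One then locates $\pg{a}$ among the $\pg{c_i},\pg{d_i},\pg{x_i}$ of the block decomposition and derives a contradiction from these two equalities together with the dichotomy above. The second statement (no $\alpha\in\Omega_\ell$ shortens $[w_0]$) is handled in the same pass by allowing $C=\emptyset$, so only the $c_i,d_i$ case arises. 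Your sketch, by contrast, never isolates where $a$ actually sits in $w$ and leaves the promised ``short case analysis'' unperformed.
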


\begin{proof}
We will prove both statements at once by supposing that either $C=\emptyset$ (and $\gamma$ is trivial) or $|C|>1$.
If $\alpha\in\Omega_\ell$ shortens $\gamma([w_0])$, then $\alpha$ is not a permutation automorphism, so suppose $\alpha=(A,a)$.
Suppose for contradiction that $\pg{a}\notin C$.
Let $w$ be a representative of $\gamma([w_0])$ of the form given in Sublemma~\ref{sl:hitCform} (or $w=w_0$ if $C=\emptyset$).
The conjugacy class of $w$ maps to the trivial element of $\Aut H_\Gamma$.
This means that every element of $\supp w$ appears an even number of times in $w$, half with positive exponent and half with negative exponent.
Since $\pg{a}\notin C$, we know by Sublemma~\ref{sl:hitCform} that $a$ appears only twice in $w$.
So we have $v_1, v_2$ words in $L-\{a,a^{-1}\}$ such that $w=v_1av_2a^{-1}$ as graphically reduced cyclic words.
Then since $\alpha$ shortens $w$, it must delete both the instance of $a$ and the instance of $a^{-1}$ in $w$ without introducing any new instances of $a^{\pm1}$.
Then $\alpha(v_1)=v_1$ and $\alpha(v_2)=a^{-1}v_2a$ (since $\alpha$ multiplies generators by $a$ only on the right).

In the case $|C|>1$, we have distinct $b, c\in C^{\pm1}$.
Suppose $a\geq b$.
Then $a\in\lkl{c}$, and therefore $a\in\lkl{b}$.
So either $a$ commutes with every element of $C$ or $a$ does not dominate any element of $C$. 

With notation as in Sublemma~\ref{sl:hitCform}, 
we first suppose that $\pg{a}$ is a $\pg{c_i}$ or a $\pg{d_i}$.
This is the only possibility if $C=\emptyset$.
We suppose that $\pg{a}=\pg{c_i}$, since the case that $\pg{a}=\pg{d_i}$ is parallel.
Then possibly by swapping $(A,a)$ with $(L-A-\lkl{a},a^{-1})$, we assume that $a=c_i$.
Then $v_2=r_id_is_i$.
If $C=\emptyset$, of course, our words $r_i$, $s_i$ and $t_i$ are all empty.
If $r_i\neq 1$ or $s_i\neq 1$ and $a$ does not commute with the elements of $C$, then to delete the instances of $a^{\pm1}$ in $ar_i$ and $s_ia^{-1}$, $\alpha$ must conjugate $C$.
Then in any event, either to delete existing instances or to avoid introducing new instances of $a^{\pm1}$, $d_i$ and $d_i^{-1}$ must be in $A$ (note that $d_i\notin\lkl{a}$ since $a=c_i$).
We know that $t_id_i^{-1}$ is an initial segment of $v_1$.
If $t_i=1$, then we already have a contradiction, since $d_i$ is then the first letter of $v_1$ and we have $\alpha(v_1)\neq v_1$.
If $t_i\neq1$, $a$ does not commute with $t_i$, and to avoid introducing an extra instance of $a$ between $t_i$ and $d_i^{-1}$, $\alpha$ must conjugate $C$.
But then $\alpha(t_i)=a^{-1}t_ia^{-1}$, and we cannot have $\alpha(v_1)=v_1$, which is a contradiction.
So $\pg{a}$ is not a $\pg{c_i}$ or a $\pg{d_i}$.
Note that in the case $C=\emptyset$, the proof is finished.

Then it must be that $\pg{a}=\pg{x_i}$ for some $i$.
For the rest of the proof, we assume $|C|>1$.
We suppose without loss of generality that $a=x_i$.
Then $v_2=u_i$, a word in $C^{\pm1}$.
If $a$ commutes with the elements of $C$, then since $\alpha$ is long-range, $\alpha$ fixes $u_i$, and we have $\alpha(v_2)=v_2$, which is a contradiction.
So suppose $a$ does not dominate any element of $C$.
Then $\alpha$ sends every element of $C$ to its conjugate by $a$.
Since $u_i$ is nontrivial and $w$ maps to the trivial element of $H_\Gamma$, we know that there are some elements of $C$ in $v_1$.
So there is a subsegment $v_3$ of $v_1$ such that $\alpha(v_3)=a^{-1}v_3a$.
Let $v_4$ be the longest subsegment of $v_1$, containing $v_3$, such that $\alpha(v_4)= a^{-1}v_4a$.
Since $\alpha(v_1)=v_1$, we know that if we delete $v_4$ from $v_1$ we get two subsegments.

By Sublemma~\ref{sl:hitCform}, the letter furthest to the left in this right subsegment of $v_1$ must be an element of $C^{\pm1}$, or an $x_i$, or a $c_i$.
If it is an element of $C^{\pm1}$, then it is conjugated by $\alpha$, contradicting the definition of $v_4$.
If it is an $x_i$, then this $x_i$ is in a subsegment $x_iu_ix_i^{-1}$.
Since $v_4$ maps to $a^{-1}v_4a$, we know that $x_i^{-1}$ must be in $A$ or else that $x_i\in\lkl{a}$.
If $x_i\notin\lkl{a}$, then the $x_i^{-1}$ on the right adds an instance of $a$, contradicting the definition of $v_4$.
If $x_i\in\lkl{a}$, then since the $u_i$ is nontrivial, it is conjugated by $a$, also contradiction the definition of $v_4$.
If this letter is a $c_i$, there are several cases.
If $c_i,d_i\in\lkl{a}$, then either the $a$ from $\alpha(v_4)$ commutes past our $c_ir_id_is_ic_i^{-1}t_id_i^{-1}$, or one of the $r_i$, $s_i$ or $t_i$ is nontrivial and an $a$ is introduced by conjugation.
If $d_i$ is in $\lkl{a}$ but $c_i$ is not, then to avoid introducing an $a$, we have $c_i^{-1}\in A$ and another $a$ is introduced either by the $c_i^{-1}$ or the $t_i$.
If $c_i$ is in $\lkl{a}$ but $d_i$ is not, then either the $a$ from $v_4$ or from $r_i$ must be cancelled by an $a^{-1}$ from $d_i$, so $d_i^{-1}\in A$ and the final $d_i^{-1}$ introduces an extra $a$.
If both $c_i,d_i\notin\lkl{a}$, then to cancel the $a$ from $v_4$, $c_i^{-1}\in A$; to cancel the $a$ from $c_i^{-1}$ or $t_i$, we have $d_i\in A$; to cancel the $a$ from $d_i$ or from $s_i$, we have $c_i\in A$; and to cancel the $a$ from $c_i$ or $r_i$, we have $d_i^{-1}\in A$.
This means that $d_i^{-1}$ introduces an extra $a$ at the end.
In any event we contradict the definition of $v_4$ if the letter in $v_1$ right after $v_4$ is a $c_i$.
So $v_4$ must extend to the right edge of $v_1$, a contradiction. 
\end{proof}

\begin{lemma}\label{le:suppmin}
Suppose $C$ is the domination equivalence class of an element $c\in X$.
Then if $\gamma\in\AAGo$ we have:
\[|C\cap \supp \gamma([w_0])|\geq |C\cap\supp w_0|\] 
\end{lemma}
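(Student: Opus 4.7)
Set $j=|C\cap\supp w_0|$; if $j=0$ the inequality is vacuous, so assume $j>0$. By Theorem~\ref{th:threeparts}(\ref{it:complementss}), write $\gamma=\beta\gamma_\ell$ with $\beta\in\langle\Omega_s\rangle$ and $\gamma_\ell\in\langle\Omega_\ell\rangle$ (we may absorb the finite obstruction so that both factors lie in $\AAGo$, since the short-range factor can absorb graphic automorphisms and inversions). The plan is to show separately that each factor, applied to any cyclic word, does not decrease the number of distinct elements of $C$ in the support, so the result follows from $\gamma([w_0])=\beta(\gamma_\ell([w_0]))$.

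The short-range factor is handled directly: any $(A,a)\in\Omega_s$ has $A\subset\stl{a}$, so for each $x\in A$ the letter $a$ commutes with $x$. The substitutions $x\mapsto xa$, $x\mapsto a^{-1}x$, and $x\mapsto a^{-1}xa$ that such a Whitehead performs therefore only affect the support in the letter $a$ itself. A small case check, splitting on whether $\pg{a}\in C$ and what cancellations occur in the graphically reduced cyclic word, shows $|C\cap\supp\beta(v)|\geq|C\cap\supp v|$ for every cyclic word $v$ and every pure short-range $\beta$.

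For the long-range factor, apply peak reduction (Theorem~\ref{th:threeparts}(\ref{it:lrwhalgs})) to $\gamma_\ell$ with respect to $[w_0]$, obtaining $\gamma_\ell=\alpha_n\cdots\alpha_1$ with each $\alpha_i\in\Omega_\ell$. When $C$ is an adjacent equivalence class, I plan to extract from this factorization a subproduct $\gamma_C$ consisting of the moves with multipliers in $C^{\pm1}$; by Sublemma~\ref{sl:hitCform}, the cyclic word $\gamma_C([w_0])$ has the explicit form containing $j$ distinguished subwords $x_iu_ix_i^{-1}$ with $u_i=\gamma_C(f_i)\in\langle C\rangle$ nontrivial. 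Since $C$ is adjacent, $\langle C\rangle\cong\Z^{|C|}$ and $\gamma_C|_{\langle C\rangle}\in\GL(|C|,\Z)$; since the $j$ letters $f_i\in (C\cap\supp w_0)^{\pm1}$ correspond to $j$ distinct basis directions, the images $u_i$ are linearly independent in $\Z^{|C|}$, and hence their supports together contain at least $j$ distinct elements of $C$. The remaining moves (multipliers outside $C$) fix $\langle C\rangle$ pointwise and so do not destroy the $u_i$; Sublemma~\ref{sl:shortenwithC} is used to ensure that this reorganization preserves the key structural features. When $C$ is non-adjacent, $\langle C\rangle$ is free of rank $|C|$, and a parallel rank argument using Lemma~\ref{le:knowdom} together with the freeness of $\langle C\rangle$ yields the same bound.

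The principal obstacle is the rearrangement step for the long-range factor: long-range Whiteheads with different multipliers do not in general commute, so collecting the $C$-multiplier moves into a single $\gamma_C$ requires a careful exchange argument. A natural alternative is an induction on the length of $\gamma_\ell$ in $\Omega_\ell$, with inductive hypothesis tracking not just $|C\cap\supp|$ but the persistence of the $j$ linearly independent $u_i$-pieces delivered by Sublemma~\ref{sl:hitCform}; the key observation powering the induction is that a Whitehead with multiplier outside $C^{\pm1}$ preserves this structure exactly, while a Whitehead with multiplier in $C^{\pm1}$ only modifies the $u_i$ by a single $\GL(|C|,\Z)$ step, which keeps them independent.
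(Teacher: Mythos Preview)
Your plan has a genuine gap: the claim that a short-range automorphism satisfies $|C\cap\supp\beta(v)|\geq|C\cap\supp v|$ for \emph{every} cyclic word $v$ is false. You correctly observe that a short-range $(A,a)$ can only alter the support in the letter $\pg{a}$, but when $\pg{a}\in C$ it can remove $\pg{a}$. For instance, if $a,b$ are adjacent with $a\geq b$ and $v=ba$, then $\tau_{a^{-1},b}(v)=ba^{-1}a=b$, dropping $a$ from the support. Since you apply the short-range factor $\beta$ last, to $v=\gamma_\ell([w_0])$, you have no control over the structure of $v$ and the ``small case check'' cannot be completed. The same phenomenon shows that your initial sentence---that each factor preserves the inequality on arbitrary cyclic words---is false for the long-range factor as well; the special structure of $[w_0]$ is essential throughout, not just for one factor.

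The paper's proof avoids this by a more refined decomposition. It writes $\gamma=\beta''\alpha'\beta_C$ where $\beta_C$ is short-range with multipliers \emph{in} $C$ (applied first, to $[w_0]$ itself), $\alpha'$ is long-range, and $\beta''$ is short-range with multipliers \emph{not} in $C$ (applied last). The last factor $\beta''$ then genuinely cannot remove elements of $C$ from any word, for the trivial reason that its multipliers lie outside $C$. For $\beta_C$ one uses that the block of $(\beta_C)_*$ on $\langle C\rangle$ is invertible together with the explicit form from Sublemma~\ref{sl:hitCform} to see that $\beta_C([w_0])$ still contains at least $|C\cap\supp w_0|$ elements of $C$ locked inside subsegments of the form $x_iu_ix_i^{-1}$. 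For $\alpha'$, one peak-reduces with respect to $\beta_C([w_0])$ and invokes Sublemma~\ref{sl:shortenwithC}: any long-range move that shortens such a word must have multiplier in $C^{\pm1}$, and such moves (again by Sublemma~\ref{sl:hitCform}) do not remove elements of $C$ from the support. When $|C|=1$ or $C$ is non-adjacent, $\beta_C$ is trivial and Sublemma~\ref{sl:shortenwithC} says no long-range move shortens $[w_0]$ at all. Your sketch gestures at the right sublemmas but applies them to the wrong factorization; the crucial move you are missing is splitting the short-range part by whether the multiplier lies in $C$, and commuting the non-$C$ piece to the outside.
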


\begin{proof}
By Theorem~\ref{th:threeparts}, we can write $\gamma=\alpha\beta$ where $\alpha\in\langle\Omega_\ell^0\rangle$ and $\beta\in\langle\Omega_s\rangle$.
By Theorem~\ref{th:threeparts}, we can write $\beta=\beta'\beta_C$,
where $\beta_C$ is a product of short-range transvections with multipliers in $C$ and $\beta'$ is a product of short-range transvections with multipliers not in $C$.
Again by Theorem~\ref{th:threeparts}, we can rewrite $\alpha\beta'$ as $\beta''\alpha'$ for some $\alpha'\in\langle\Omega_\ell^0\rangle$ and some $\beta''\in\langle\Omega_s\rangle$.
Further, by the form of the sorting substitutions in Definition~\ref{de:sort} of Day~\cite{fpraag} (in the proof of part~\ref{it:complements} of Theorem~\ref{mt:threeparts} of Day~\cite{fpraag}), we know that $\beta''$ is also a product of short-range transvections with multipliers not in $C$.

We have $\gamma=\beta''\alpha'\beta_C$.
The automorphism $\beta''$ cannot remove any instances of elements of $C$ from the support of a word because it can be written as a product of transvections whose multipliers are not in $C$.
We will prove the lemma by showing that $\beta_C$ cannot send $w_0$ to a word containing fewer elements of $C$, and then by showing that 
$\alpha'$ cannot remove elements of $C$ from $\beta_C([w_0])$.

Consider $(\beta_C)_*\in\Aut H_\Gamma$ as a matrix with respect to the generators of $H_\Gamma$ given by the image of $X$.
By Corollary~\ref{co:tvgenstruct} of Day~\cite{fpraag}, we know that the block of this matrix sending the image of $C$ to itself is invertible. 
Then:
 \[\left|\bigcup\{C\cap \supp \beta_C(a)|a\in C\cap \supp w_0\}\right|\geq \left|C\cap\supp w_0\right|\]
Suppose temporarily that $C$ is an adjacent domination equivalence class.
If $\pg{a}\in C\cap\supp w_0$, we know that $\pg{a^*}\notin C$ since $a^*\notin\lkl{a}$.
Then $\pg{a^*}\notin\supp\beta_C(x)$ for any $x\neq (a^*)^{\pm1}$, and since $a^*\notin\lkl{a}$, we also know $\beta_C(a^*)=a^*$.
If $C$ is a non-adjacent domination equivalence class, then $\beta_C=1$, and it follows in both cases that $\beta_C(a^*)=a^*$ for every $a$ with $\pg{a}\in C\cap \supp w_0$.

Consider the representative $w$ for $\beta_C([w_0])$ from Sublemma~\ref{sl:hitCform}.
For each $a\in C\cap\supp w_0$, $\pg{a^*}$ is one of the $\pg{x_i}$ elements and $\beta_C(a)$ is the corresponding $u_i$.
In particular, there are at least as many elements of $C$ appearing in subsegments of $w$ of the form $(a^*)^{-1}\beta_C(a)a^*$ or $a^*\beta_C(a)^{-1}(a^*)^{-1}$, for $a\in C^{\pm1}$ and $a^*\notin C^{\pm1}$ as there are elements of $C\cap \supp w_0$.

By Theorem~\ref{th:threeparts}, $\alpha'$ has a factorization by elements of $\Omega^0_\ell$ that is peak-reduced with respect to $\beta_C([w_0])$.
This factorization may include permutations, but these permutations preserve $C$ (because they are in $\AAGo$), so the only way to remove any extra instances of elements of $C$ from $w$ is to decrease its length.
Then peak reduction implies that the elements of this factorization shorten $w$ immediately and keep shortening it until all the excess instances of elements of $C$ have been removed.
If $C$ is an adjacent domination equivalence class and $|C|>1$, then by Sublemma~\ref{sl:shortenwithC}, each one of these shortening automorphisms has multiplier in $C$, and again by Sublemma~\ref{sl:hitCform}, we see that these shortening automorphisms do not remove any elements from $C\cap\supp w_0$.
If $|C|=1$ or $C$ is a non-adjacent domination equivalence class, then $\beta_C=1$ and $\beta_C([w_0])=[w_0]$.
Then by Sublemma~\ref{sl:shortenwithC}, no element of $\Omega_\ell$ can shorten $\beta_C([w_0])$, and therefore $\alpha'$ cannot remove any elements from $C\cap\supp w_0$.
\end{proof}

\begin{corollary}\label{co:lenmin}
For each $\gamma\in\AAGo$, we have: 
\[|\gamma([w_0])|\geq |w_0|\]
\end{corollary}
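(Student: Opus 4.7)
The plan is to deduce the length bound from the support bound given by Lemma~\ref{le:suppmin}, by invoking the fact that $w_0$ lies in the commutator subgroup.

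First, I would observe that $|w_0|=4k=2|\supp w_0|$: each commutator $[a_i,b_i]$ has length $4$ and contributes exactly two generators to $\supp w_0$, and the $a_i$ and $b_i$ are pairwise distinct vertices by hypothesis on the symplectic structure.

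Next, I would note that $w_0\in A'_\Gamma$, and since $\gamma\in\AAGo$ is an automorphism of $A_\Gamma$, we also have $\gamma(w_0)\in A'_\Gamma$. Therefore the image of $\gamma([w_0])$ in $H_\Gamma$ is zero, which means that in any graphically reduced cyclic word representing $\gamma([w_0])$, every generator in the support appears equally often with positive and negative exponents, and hence at least twice. This gives
\[|\gamma([w_0])|\geq 2|\supp\gamma([w_0])|.\]
I would then split $\supp\gamma([w_0])$ and $\supp w_0$ into their intersections with the domination equivalence classes $C$ of $\Gamma$ and apply Lemma~\ref{le:suppmin} to each class separately:
\[2|\supp\gamma([w_0])|=2\sum_C|C\cap\supp\gamma([w_0])|\geq 2\sum_C|C\cap\supp w_0|=2|\supp w_0|=|w_0|.\]
Chaining these two inequalities yields $|\gamma([w_0])|\geq |w_0|$, as required.

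The only subtle point is the parity observation that $|\gamma([w_0])|\geq 2|\supp\gamma([w_0])|$; this is essentially automatic because $w_0$ is already a product of commutators, so its image under any automorphism remains in $A'_\Gamma$ and no letter in a reduced representative can appear with nonzero net exponent. Everything else is a direct bookkeeping application of the already-proved Lemma~\ref{le:suppmin}, so there is no substantial obstacle.
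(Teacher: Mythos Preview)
Your proof is correct and follows essentially the same approach as the paper: bound the support from below via Lemma~\ref{le:suppmin}, then use that $\gamma(w_0)\in A'_\Gamma$ forces every generator in the support to appear at least twice. The only difference is that you make the summation over domination equivalence classes explicit, whereas the paper simply asserts $|\supp\gamma([w_0])|\geq 2k$ as an immediate consequence of Lemma~\ref{le:suppmin}.
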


\begin{proof}
We know from Lemma~\ref{le:suppmin} that $|\supp\gamma([w_0])|\geq 2k$. 
Since $\gamma([w_0])$ maps to the trivial element of $H_\Gamma$, each element appears at least twice (once with positive and once with negative exponent).
So $|\gamma([w_0])|$ is at least $4k$, the length of $w_0$.
\end{proof}

Now we will analyze the short-range part of an automorphism fixing $[w_0]$.

\begin{definition}
A short-range transvection $\tau_{a,b}$ is \emph{$w_0$--irrelevant} if $a\in\lkl{b^*}$.
An automorphism is \emph{$w_0$--irrelevant} if it is a product of $w_0$--irrelevant transvections.
\end{definition}

\begin{remark}
Suppose $\adj{a}{b^*}$ and $a\geq b$.
Then:
\[\tau_{a,b}(bb^*b^{-1}(b^*)^{-1})=bab^*a^{-1}b^{-1}(b^*)^{-1}=bb^*b^{-1}(b^*)^{-1}\]
So $\tau_{a,b}$ fixes $w_0$.
We call these automorphisms $w_0$--irrelevant because they are an obvious class of automorphisms fixing $w_0$.
\end{remark}

\begin{sublemma}\label{sl:moveirrel}
Let $C$ be an adjacent domination equivalence class in $X$.
Suppose $\alpha\in\langle\Omega^0_\ell\rangle$, $\sigma$ is a permutation automorphism that fixes $C$, $\gamma$ is a product of short-range transvections with multipliers not in $C$, and $\beta$ is a product of short-range transvections with multipliers in $C$, such that:
\[\gamma\sigma\alpha\beta([w_0])=[w_0]\]
Then we can write $\beta$ as $\beta'\iota$, where $\iota$ is $w_0$--irrelevant and $\beta'$ is a product of short-range transvections with multipliers in $C$, none of which are $w_0$--irrelevant.
\end{sublemma}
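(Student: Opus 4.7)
The plan is to use the hypothesis $\gamma\sigma\alpha\beta([w_0]) = [w_0]$ to tightly constrain the cyclic word $\beta([w_0])$, and then to construct $\iota$ explicitly from this constraint.

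First I would apply Sublemma~\ref{sl:hitCform} to $\beta$: since $\beta$ is a product of short-range transvections (no partial conjugations) with multipliers in $C$, the words $r_j$, $s_j$, $t_j$ and any stray singleton $C$-factors in the conclusion of Sublemma~\ref{sl:hitCform} are trivial, and so $\beta([w_0])$ is a cyclic concatenation of segments $x_j u_j x_j^{-1}$ (where each $x_j \in L - C^{\pm 1}$ is the $*$-partner of the letter $f_j \in (\supp w_0 \cap C)^{\pm 1}$ coming from some factor $[a_i,b_i]$ of $w_0$) together with the untouched commutators $[a_i,b_i]$ whose vertices avoid $C$. Combining Lemma~\ref{le:suppmin} and Corollary~\ref{co:lenmin} applied to $\gamma\sigma\alpha\beta$ (which by hypothesis fixes $[w_0]$) and to $\beta$ itself, I would deduce that $|\supp \beta([w_0]) \cap C| = |\supp w_0 \cap C|$ and $|\beta([w_0])| = |w_0|$. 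These equalities force each $u_j$ to have the form $v_j f_j w_j$ where $v_j, w_j$ are words in the set $C^{\pm 1} \cap \lkl{x_j}$.

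Using this normal form, I would define $\iota$ as the product of transvections $\tau_{c,f_j}$, one for each letter $c$ of each $v_j$ and $w_j$, in an order that undoes the effect of these commuting letters on $\beta([w_0])$. Each such $\tau_{c,f_j}$ is $w_0$-irrelevant: since $c \in \lkl{x_j}$ and $x_j$ has the same vertex as $f_j^*$, we have $c \in \lkl{f_j^*}$. Hence $\iota$ is $w_0$-irrelevant. Setting $\beta' := \beta\iota^{-1}$, the word $\beta'([w_0])$ has each $u_j$ replaced by the single letter $f_j$, so $\beta'$ acts on $w_0$ only by the ``essential'' part of $\beta$. I would then show that $\beta'$ admits a factorization as a product of short-range transvections with multipliers in $C$, none of which are $w_0$-irrelevant.

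The main obstacle will be this last factorization claim. To handle it, I expect to use the injectivity of $\langle\Omega_s\rangle \hookrightarrow \Aut H_1(A_\Gamma)$ from \partref{it:srinjs}, reducing the statement to an assertion about the matrix $\beta'_*$: namely that it is a product of elementary row operations $E_{c,d}$ with $c \in C^{\pm 1}$ satisfying $c \notin \lkl{d^*}$. A block-matrix analysis together with a row-reduction algorithm adapted from the one in the proof of Theorem~\ref{th:domspred} (restricted to rows indexed by $C$) should then yield the required factorization of $\beta'_*$, which lifts uniquely to the claimed factorization of $\beta'$.
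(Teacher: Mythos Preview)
Your central claim that $|\beta([w_0])|=|w_0|$ is not justified, and in fact it is false in general. Corollary~\ref{co:lenmin} only gives the inequality $|\beta([w_0])|\geq|w_0|$; the hypothesis $\gamma\sigma\alpha\beta([w_0])=[w_0]$ does not force equality, because $\alpha\in\langle\Omega^0_\ell\rangle$ is perfectly capable of shortening $\beta([w_0])$ back down. For a concrete counterexample, take $C=\{c_1,c_2\}$ an adjacent class with both $c_1,c_2\in\supp w_0$, and suppose $c_1\notin\lkl{c_2^*}$. Then $\beta=\tau_{c_1,c_2}$ is a single short-range transvection with multiplier in $C$ that is \emph{not} $w_0$--irrelevant, and one computes $|\beta([w_0])|=|w_0|+2$. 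In this situation the correct decomposition is $\beta'=\tau_{c_1,c_2}$ and $\iota=1$; your construction, which would force each $u_j$ to reduce to a single letter and hence force $\beta'([w_0])=[w_0]$, simply does not apply. The letters of $u_j$ beyond $f_j$ need not lie in $\lkl{x_j}$, so your definition of $\iota$ breaks down and the block-matrix reduction you sketch for $\beta'_*$ is never reached.

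The paper's argument is structurally different: it never attempts to control the length of $\beta([w_0])$. Instead it proceeds by two normality observations. First, the subgroup generated by the $w_0$--irrelevant transvections $\tau_{a,b}$ with $\pg{b}\in\supp w_0$ is normal in the group of short-range transvections with multiplier in $C$, so these can be pushed to the right immediately. After this rearrangement, the only way an element $y\in C\setminus\supp w_0$ can appear in $\beta([w_0])$ is inside a segment $(b^*)^{\pm1}\beta(b)^{\mp1}(b^*)^{\mp1}$; the paper then argues, via peak reduction of $\alpha$ and Lemma~\ref{le:suppmin}, that such a $y$ can never be removed by $\gamma\sigma\alpha$, contradicting the hypothesis. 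Hence $\supp\beta(b)\subset\supp w_0$ for each $b\in\supp w_0$, which (by \partref{it:srinjs}) forces $\beta$ to factor through transvections with multipliers in $C\cap\supp w_0$. A second normality step then pushes the remaining irrelevant transvections (those acting on letters outside $\supp w_0$) to the right. The key point you are missing is that the constraint coming from the hypothesis is on the \emph{support} of the $\beta(b)$, not on the length of $\beta([w_0])$.
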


\begin{proof}
First we note that the group
\[\langle \{\tau_{a,b}|\pg{b}\in\supp w_0\text{ and } a\in C\cap \lkl{b}\cap\lkl{b^*}\}\rangle\]
is a normal subgroup of the group generated by short-range transvections with multipliers in $C$.
If $b\in\supp w_0$ and $a\in C\cap\lkl{b}\cap\lkl{b^*}$, then $b\notin C$.
If $b$ were in $C$, then $b\sim a$ and $b\in\lkl{b^*}$, a contradiction.
So for $\tau_{c,d}$ a short-range transvection with $c\in C$, we have $\pg{c}\neq\pg{b}$, and either $\tau_{c,d}$ commutes with $\tau_{a,b}$, or $\pg{d}=\pg{a}$ and we apply $[\tau_{c,a},\tau_{a,b}]=\tau_{c,b}$.
Since $c\geq a$, we have $c\in\lkl{b}$, and $\tau_{c,b}$ is a member of the subgroup and the subgroup is normal.

So we can move $w_0$--irrelevant transvections $\tau_{a,b}$ with $\pg{b}\in\supp w_0$ to the right of any other transvections in a factorization of $\beta$, and therefore without loss of generality we may assume that $\beta$ has a factorization in which the only $w_0$-irrelevant transvections that appear are the ones of the form $\tau_{a,b}$ where $\pg{b}\notin\supp w_0$.

Apply $\beta$ to $w_0$ letter-by-letter and graphically reduce to get a cyclic word $w$.
Then $w$ is a representative of $\beta([w_0])$.
Suppose there is some $b\in \supp w_0$ and $y\in C -\supp w_0$ with $y\in \supp \beta(b)$.
By the form of $\beta$, we know that no element in $\supp \beta(b)$ commutes with $b^*$.  
Since $b^*$ does not commute with $b$, we know $b^*$ is not in $C$, and therefore each instance of $b^*$ survives in $w$.
No instance of $y$ can be cancelled out of the subsegment $b^*\beta(b)^{-1}(b^*)^{-1}$ or $(b^*)^{-1}\beta(b)b^*$ of $w$ (this is the image of the subsegment $b^*b^{-1}(b^*)^{-1}$ or $(b^*)^{-1}bb^*$ of $w_0$).

Since $y\notin\supp w_0$, either $\alpha$ or $\gamma$ or $\sigma$ must remove it.
We know that $\sigma$ fixes $C$, so $\sigma$ cannot remove it.
Also $\gamma$ cannot remove $y$ because $\gamma$ can be written as a product of transvections whose multipliers are not $y$.
We can peak-reduce $\alpha$ with respect to $w$.
This peak-reduced factorization may have permutation automorphisms in it, but these will fix adjacent domination equivalence classes.
So, there must be a sequence of long-range automorphisms, each of which progressively shortens $w$, which remove all instances of $y$.
This is impossible: the $b^*$ and $(b^*)^{-1}$ cannot be removed since removing one of them would change the class of the word in $H_\Gamma$ and removing both would contradict Lemma~\ref{le:suppmin}; they cannot be moved without being removed since this would not shorten the word; and without moving or removing the $b^*$ and $(b^*)^{-1}$ it is impossible to remove the instance of $y$ between them.
This is a contradiction, so we may assume that for $b\in \supp w_0$, we have $\supp \beta(b)\subset\supp w_0$.

This fact, together with Theorem~\ref{th:threeparts}, lets us deduce that $\beta$ has a factorization by short-range transvections with multipliers in $C\cap \supp w_0$.
Note that the subgroup 
\[\langle\{\tau_{x,y}|x\in C\cap \supp w_0\text{ and }y\notin\supp w_0\}\rangle\]
is normal in the group of short-range automorphisms with multipliers in $C\cap \supp w_0$.
This is because for any $\tau_{x,y}$ with $x\in C\cap\supp w_0$ and $y\notin \supp w_0$, and any $\tau_{a,b}$ with $a\in C\cap\supp w_0$,
either $\tau_{x,y}$ and $\tau_{a,b}$ commute or $\pg{b}=\pg{x}$ and we apply the identity
$[\tau_{a,b},\tau_{b,x}]=\tau_{a,x}$.
Since this subgroup is normal, we can rewrite $\beta$ with all the $w_0$--irrelevant transvections first.
\end{proof}

The following lemma is the core reason that we are able to peak-reduce automorphisms fixing $[w_0]$, regardless of whether they are long-range or not.
\begin{lemma}\label{le:specialsrform}
Suppose $\alpha\in\langle\Omega^0_\ell\rangle$ and $\beta\in\langle\Omega_s\rangle$ such that $\alpha\beta([w_0])=[w_0]$.
Then there is a permutation automorphism $\sigma$ that leaves $\supp w_0$ invariant, a $w_0$--irrelevant automorphism $\iota\in\langle\Omega_s\rangle$, distinct elements $x_1,\ldots, x_r\in (\supp w_0)^{\pm1}$ (with $x_i\neq x_j^{-1}$ for any $i,j$) and some elements $y_1,\ldots, y_r\in (\supp w_0)^{\pm1}$ with $x_i\geq y_i$ and $y_i\in\lkl{x_i}$ such that
\[\beta=\sigma\tau_{x_1,y_1}\cdots\tau_{x_r,y_r}\iota\]
and such that each $x_i$ is domination-minimal among $\{x_i,x_{i+1},\ldots,x_r\}$.
\end{lemma}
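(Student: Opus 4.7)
The plan is to successively simplify $\beta$ using Sublemma~\ref{sl:moveirrel} and the decomposition results from Theorem~\ref{th:threeparts}. First, partition the short-range factors by the adjacent domination equivalence class of the multiplier: using the identities among short-range Whitehead automorphisms whose multipliers lie in distinct classes (Proposition~\ref{pr:identities} of Day~\cite{fpraag}), rewrite $\beta=\beta_{C_1}\cdots\beta_{C_m}$ where each $\beta_{C_j}$ is a product of short-range transvections whose multipliers all lie in a single adjacent domination equivalence class $C_j$. Apply Sublemma~\ref{sl:moveirrel} to each $\beta_{C_j}$ individually (absorbing the remaining factors into the long-range part via part~(1) of Theorem~\ref{th:threeparts} in order to verify its hypothesis, as in the proof of Lemma~\ref{le:suppmin}). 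This yields $\beta_{C_j}=\beta_{C_j}'\iota_{C_j}$ with $\iota_{C_j}$ a $w_0$--irrelevant automorphism and $\beta_{C_j}'$ a product of non-irrelevant short-range transvections whose multipliers, and (by tracing the proof of the sublemma) whose targets, can be taken in $\supp w_0$. Using the normality of the $w_0$--irrelevant subgroup inside $\langle\Omega_s\rangle$ established in that proof, consolidate the $\iota_{C_j}$ on the right into a single $\iota$, giving $\beta=\beta'\iota$ where $\beta'$ is a product of non-irrelevant short-range transvections whose multipliers and targets lie in $\supp w_0$.

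Next, factor $\beta'$ as $\sigma$ times a sequence of transvections with distinct multipliers. By part~(2) of Theorem~\ref{th:threeparts}, $\langle\Omega_s\rangle$ embeds in $\Aut H_\Gamma$, so $\beta'$ is determined by its image matrix, which restricts to an invertible linear map on the sublattice spanned by $\supp w_0$ realized as a product of elementary row operations. Perform a Bruhat-type reduction on this matrix: extract a permutation $\sigma$ of $\supp w_0$ (realized by a type~(1) Whitehead automorphism preserving $\supp w_0$ setwise), and then use distinct-pivot Gaussian elimination on the residue, invoking Lemma~\ref{le:knowdom} to ensure that each elementary row operation corresponds to a legitimate transvection $\tau_{x,y}$ with $x\geq y$. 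This writes $\beta'$ as $\sigma\cdot\tau_{x_1,y_1}\cdots\tau_{x_r,y_r}$ in which each vertex $\pg{x_i}$ appears exactly once; by part~(2) of Theorem~\ref{th:threeparts} again, this matrix-level factorization lifts uniquely to $\langle\Omega_s\rangle$.

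Finally, reorder the transvections so that each $x_i$ is domination-minimal among $\{x_i,\ldots,x_r\}$. Perform a bubble sort using the commutation and bracket identities of Proposition~\ref{pr:identities} of Day~\cite{fpraag}: whenever adjacent factors $\tau_{x_i,y_i}\tau_{x_{i+1},y_{i+1}}$ have $x_i$ strictly dominating $x_{i+1}$, swap them, absorbing any bracket term either into a neighboring factor (when it shares a multiplier with an existing $x_j$) or into $\iota$ (when the bracket term happens to be $w_0$--irrelevant). The main obstacle is controlling this last step: one must verify that the bracket terms introduced preserve the distinctness of the $x_i$ and remain short-range with multipliers and targets in $\supp w_0$ modulo the $w_0$--irrelevant subgroup, so that a monovariant counting domination-inversions among adjacent pairs terminates the process; this requires case-by-case attention to which of the identities from Proposition~\ref{pr:identities} of Day~\cite{fpraag} applies to each swap, together with repeated use of the normality of the $w_0$--irrelevant subgroup to keep $\iota$ cleanly on the right.
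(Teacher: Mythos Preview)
Your outline tracks the paper's strategy in its first paragraph (separating by adjacent domination class and invoking Sublemma~\ref{sl:moveirrel}), but the second paragraph skips the heart of the argument. You assert that after extracting a permutation $\sigma$, ``distinct-pivot Gaussian elimination'' on the matrix of $\beta'$ yields a product $\tau_{x_1,y_1}\cdots\tau_{x_r,y_r}$ in which each vertex $\pg{x_i}$ occurs exactly once. But an arbitrary invertible integer matrix, even upper-triangular with $1$'s on the diagonal, does \emph{not} factor as a product of elementary matrices with pairwise distinct row indices; in general many elementary operations with the same multiplier are needed. Nothing in your Bruhat reduction or in Lemma~\ref{le:knowdom} forces this.

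The paper establishes this special form by a combinatorial length argument that genuinely uses the hypothesis $\alpha\beta([w_0])=[w_0]$ together with Lemma~\ref{le:suppmin} and Corollary~\ref{co:lenmin}. After stripping out the $w_0$--irrelevant part, one obtains an automorphism $\delta_p$ (within the class $C_p$) whose action on each $a\in\supp w_0$ produces a word $\delta_p(a)$ sitting in $w$ between $a^*$ and $(a^*)^{-1}$. Peak-reducing the long-range part shows that no extra instances of elements of $C_p$ can be removed, so every element appearing in any $\delta_p(a)$ must survive (up to $\sigma_p$) in $w_0$ with the same multiplicity. Since $w_0$ contains each generator exactly twice, this forces: no $x$ appears in $\delta_p(a)$ to power $>1$; and no $x$ appears in $\delta_p(a)$ for two distinct $a$. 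Translated to the matrix $(\delta_p)_*$, this says each row has at most one nonzero off-diagonal entry, and that entry is $\pm1$; hence $\delta_p$ factors as a product of transvections with distinct multipliers in $C_p$. Your proposal contains no analogue of this step, and without it the conclusion about distinct $x_i$ is unsupported. (The domination-minimal ordering then comes for free from processing the $C_p$ in increasing order, rather than from a separate bubble-sort; your third paragraph is unnecessary once the second is done correctly.)
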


\begin{proof}
Suppose that $C_1\cup\cdots\cup C_m=X$ is the decomposition of $X$ into adjacent domination equivalence classes.
We assume that these sets are indexed such that if $a\in C_i$ and $b\in C_j$ with $a\in\lkl{b}$, $a\geq b$ and $a\not\sim b$, then $i>j$.  
This assumption makes $C_1$ minimal and makes $C_m$ maximal.

Inductively assume we have expressed $\beta$ as
\[\beta'\sigma_{p-1}\delta\iota_{p-1}\]
where $\beta'$ is a product of short-range transvections whose multipliers are in $C_{p}\cup \ldots\cup C_m$; 
the automorphism $\delta$ can be written as a product of short-range transvections with distinct multipliers in $C_1\cup\ldots\cup C_{p-1}$, in domination-nondecreasing order; 
the automorphism $\iota_{p-1}$ is $w_0$--irrelevant; 
and $\sigma_{p-1}$ is a permutation automorphism that is trivial outside of $C_1\cup\ldots\cup C_{p-1}$.  
We will show that we can then do the same for $p$ instead of $p-1$.

By Corollary~\ref{co:tvgenstruct} of Day~\cite{fpraag}, we can rewrite $\beta'$ as 
$\beta''\beta_p$, where $\beta_p$ is a product of short-range transvections whose multipliers are in $C_p$ and $\beta''$ is a product of short-range transvections whose multipliers are in $C_{p+1}\cup\ldots\cup C_m$.
Then $\beta_p$ commutes with $\sigma_{p-1}$.
We can also conjugate $\beta_p$ across $\delta$, as follows.
Observe that if we have short-range transvections $\tau_{a,b}$ and $\tau_{c,d}$ with $c\in C_p$ and $a\in C_i$ for $i<p$, then the transvections commute unless $\pg{d}=\pg{a}$, in which case we have
$[\tau_{c,a},\tau_{a,b}]=\tau_{c,b}$.
In any case, we do not change $\delta$ by conjugating these elements across it, and the new transvections we introduce have multipliers in $C_p$.
As a result we can write $\beta$ as
\[\beta''\sigma_{p-1}\delta\beta'_p\iota_{p-1}\]
where $\beta'_p$ is a product of transvections whose multipliers are in $C_p$.

Next we move $\beta''$ back across $\alpha$ and move $\delta$ across $\sigma_{p-1}$ and $\alpha$.
Of course, this is possible by Theorem~\ref{th:threeparts}, but we also note that by Equation~(\ref{eq:adjsubs}) of Day~\cite{fpraag}, if we introduce new short-range transvections through this process, they will have the same multipliers as those already in $\beta''$ and $\delta_{p-1}$.
So we can write
\[\alpha\beta''\sigma_{p-1}\delta=\gamma\sigma_{p-1}\alpha'\]
where $\gamma$ is a product of short-range transvections with multipliers not in $C_p$ and $\alpha'\in\langle\Omega^0_\ell\rangle$.

Since $\iota_{p-1}$ is $w_0$--irrelevant, one can easily see that $\iota_{p-1}([w_0])=[w_0]$.
Then since $\alpha\beta=\gamma\sigma_{p-1}\alpha'\beta'_p\iota_p$, we have $\gamma\sigma_{p-1}\alpha'\beta'_p([w_0])=[w_0]$.
By Sublemma~\ref{sl:moveirrel}, we can write $\beta'_p\iota_{p-1}$ as $\beta''_p\iota_p$, where $\beta''_p$ is a product of transvections 
with multiplier in $C_p$ that are not $w_0$--irrelevant, and $\iota_p$ is a product of $w_0$--irrelevant transvections.
In particular, we have $\beta=\beta''\sigma_{p-1}\delta\beta''_p\iota_p$.
 
If we consider $(\beta''_p)_*\in\Aut(H_\Gamma)$ as a matrix, we know that the block of $(\beta''_p)_*$ taking the image of $C_p$ to itself is invertible.
Further, since we have removed all the $w_0$--irrelevant automorphisms, we know that the block of $(\beta''_p)_*$ taking the image of $C_p\cap \supp w_0$ to itself is invertible.  
Then there is a permutation $\sigma'_p$ of $(C_p\cap\supp w_0)^{\pm1}$ such that $\sigma'_p(x)$ appears in $\beta''_p(x)$ to a positive power, for $x\in X$.
We extend $\sigma'_p$ by the identity outside of $C_p$ to get a permutation of $L$; since $C_p$ is an adjacent domination equivalence class this permutation extends to an automorphism of $A_\Gamma$.
Let $\delta_p=(\sigma'_p)^{-1}\beta''_p$, and let $\sigma_p=\sigma_{p-1}\sigma'_p$.
Then each $x\in X$ appears in $\delta_p(x)$ to a positive power.
Note that the hypotheses on $\delta$ imply that $\delta$ commutes with $\sigma'_p$, and we have
$\beta=\beta''\sigma_p\delta\delta_p\iota_p$.

Let $w$ be a word gotten by applying $\delta_p$ letter-by-letter to $w_0$ and graphically reducing.
Since $\delta_p$ is free of $w_0$-irrelevant transvections in its factorization, for any $a\in\supp w_0$, we know $\supp \delta_p(a)$ does not contain any elements commuting with $a^*$.
Then if we further suppose that $\supp\delta_p(a)\neq\{a\}$, then $a$ is adjacently dominated by an element of $C_p$, and we know that $a^*\notin C_p$ and we have $a^*\delta_p(a)^{-1}(a^*)^{-1}$ or $(a^*)^{-1}\delta_p(a)a^*$ as a subsegment in $w$.
This is also true if $a\in C_p$.
Of course, $\alpha\beta''\sigma_p\delta([w])=[w_0]$.
If we let $\alpha''=(\sigma'_p)^{-1}\alpha'\sigma'_p\in\langle\Omega_\ell\rangle$, then $\alpha\beta''\sigma_p\delta=\gamma\sigma_p\alpha''$.
We know that $\gamma$ cannot remove any instances of elements in $C_p$ from a word, and $\sigma_p$ can permute the elements of $C_p\cap\supp w_0$ but cannot remove any.
Therefore if the elements of $\supp\delta_p(a)$ in $w$ are removed by $\gamma\sigma_p\alpha''$, it must be $\alpha''$ that removes them.
We assume $\alpha''$ to be peak-reduced with respect to $\delta_p([w_0])$, so there must be a sequence of long-range automorphisms that progressively shortens $\delta_p([w_0])$ and remove the extra instances of elements of $C_p$.
However, we know we cannot alter the instance of $a^*\delta_p(a)^{-1}(a^*)^{-1}$ or $(a^*)^{-1}\delta_p(a)a^*$ in $w$ by any such moves.
If on the other hand $\delta_p(a)=a$, we know by Lemma~\ref{le:suppmin} that $\sigma_p(a)$ survives to the final $w_0$.

Therefore for each element $c$ appearing in $\delta_p(a)$ for any $a\in\supp w_0$, the element $\sigma_p(c)$ appears in the final $w_0$ with at least the multiplicity with which $c$ appears in $\delta_p(a)$.  
We know $\sigma_p(c)\in(\supp w_0)^{\pm1}$ if and only if $c\in(\supp w_0)^{\pm1}$.
Therefore there cannot be any $a\in \supp w_0$ with $\delta_p(a)$ containing any $x\in \supp w_0$ to any power greater than $1$ in absolute value, or with $\delta_p(a)$ containing any $x\notin\supp w_0$ at all.
Finally, if there are two distinct elements $c_1, c_2\in \supp w_0$ and some $x$ with $x\in\supp \delta_p(c_i)$ for $i=1,2$, then $x$ appears in $c_i^*\delta_p(c_i)^{-1}(c_i^*)^{-1}$ or $(c_i^*)^{-1}\delta_p(c_i)c_i^*$ for $i=1,2$, and also in $x^*\delta_p(x)^{-1}(x^*)^{-1}$ or $(x^*)^{-1}\delta_p(x)x^*$ in $w$.
So in this case, these three instances cannot be removed, and since two of them are both to a positive power or both to a negative power, there would be at least $4$ instances of $\sigma_p(x)$ in $w_0$, which is impossible.
So at most one element of $\supp w_0$ maps to an element with a given $x$ in its support under $\delta_p$.

From this we deduce that the matrix $(\delta_p)_*$ has diagonal entries of $1$, has off-diagonal entries of either $\pm1$ or $0$, and has only trivial entries away from the rectangular block sending elements dominated by $C_p$ to the image of $C_p\cap \supp w_0$.
Further, each row has at most one nonzero off-diagonal entry.
Then the block sending the image of $C_p\cap\supp w_0$ to itself must be invertible; all together these conditions indicate that there is a re-indexing of the basis that makes $(\delta_p)_*$ upper-triangular. 
An upper-triangular matrix where each row has at most one nonzero off-diagonal entry can be column reduced using each row operation at most once.
By Theorem~\ref{th:threeparts}, we have factored $\delta_p$ as a product of short-range transvections with distinct multipliers in $C_p$.

Then one can easily see that $\beta=\beta''\sigma_p(\delta\delta_p)\iota_p$ is a factorization satisfying the inductive hypothesis for the next step.
The lemma follows.
\end{proof}

Finally, we proceed to reducing peaks.

\begin{lemma}\label{le:prone}
Let $\tau_{x,y}\in\Omega_s$ with $x,y\in \supp w_0$.
Let $\alpha\in\langle\Omega^0_\ell\rangle$.
Suppose $\beta\in\langle\Omega_s\rangle$ is a product of transvections of the form $\tau_{a,b}$ for various $a\in \supp w_0$, $\pg{a}\neq\pg{x}$, such that $x$ does not strictly dominate $a$.
Further suppose that $\alpha\tau_{x,y}\beta([w_0])$ has the same length and support as $w_0$.

Then we can find $(B,x)\in \Omega$, and $\alpha',\alpha''\in\langle\Omega^0_\ell\rangle$ such that $\alpha\tau_{x,y}\beta=\alpha'(B,x)\alpha''\beta$,
and $\alpha''\beta([w_0])$ and $(B,x)\alpha''\beta([w_0])$ have the same length and support as $w_0$.
\end{lemma}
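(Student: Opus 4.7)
The plan is to identify the Whitehead automorphism $(B,x)$ as the composition of $\tau_{x,y}$ with a suitable long-range factor of $\alpha$ that has multiplier $x$, exploiting the fact that Whitehead automorphisms with the same multiplier compose cleanly: if $(A,x) \in \Omega_\ell$ with $A \cap \lkl{x} = \emptyset$, then since $y \in \lkl{x}$ we have $y, y^{-1} \notin A$, and direct evaluation gives $(A,x)\tau_{x,y} = (A \cup \{y\}, x)$, which lies in $\Omega$ (but not in $\Omega_\ell$). This composition can serve as $(B,x)$ once we verify the conditions of Lemma~\ref{le:whdef} of Day~\cite{fpraag}.

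First, I will apply the pure version of part~(\ref{it:lrwhalgs}) of Theorem~\ref{th:threeparts} (Remark~\ref{re:purewhalg} of Day~\cite{fpraag}) to peak-reduce $\alpha$ with respect to the conjugacy class $[\tau_{x,y}\beta(w_0)]$, obtaining a factorization $\alpha = \alpha_r \cdots \alpha_1$ with each $\alpha_i \in \Omega_\ell^0$. By Corollary~\ref{co:lenmin}, both $|\beta([w_0])|$ and $|\tau_{x,y}\beta([w_0])|$ are at least $|w_0|$, while the total image $\alpha\tau_{x,y}\beta([w_0])$ has length exactly $|w_0|$. The peak-reduced property, together with this floor, forces the sequence $|\alpha_i \cdots \alpha_1 \tau_{x,y}\beta([w_0])|$ to descend monotonically to $|w_0|$ and then stabilize, so there is a smallest index $j$ at which length $|w_0|$ is attained.

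Next, I will argue that the shortening step $\alpha_j = (A_j, a_j)$ has multiplier $a_j = x^{\pm1}$. The key observation is that $\tau_{x,y}$ modifies $\beta([w_0])$ only by inserting an $x$ or $x^{-1}$ adjacent to each occurrence of $y^{\pm 1}$. The excess length to be removed therefore consists of such inserted $x^{\pm 1}$'s, and the hypotheses on $\beta$ (no factor has multiplier at $\pg{x}$ and $x$ does not strictly dominate any multiplier in $\beta$) together with the structure of long-range Whitehead automorphisms means only a long-range factor with multiplier $x^{\pm 1}$ can cleanly cancel such insertions. After replacing $\alpha_j$ by its conjugate with the inversion of $x$ if necessary, I take $a_j = x$, set $(B,x) := (A_j \cup \{y\}, x)$, and set $\alpha' := \alpha_r \cdots \alpha_{j+1}$. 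To produce $\alpha''$, I commute $\tau_{x,y}$ leftward past $\alpha_{j-1}\cdots\alpha_1$ using the identities in Day~\cite{fpraag}; since none of these factors has multiplier at $\pg{x}$, the commutation stays within $\langle \Omega_\ell^0\rangle$, and produces $\alpha'' \in \langle\Omega_\ell^0\rangle$ with $\alpha\tau_{x,y} = \alpha'(B,x)\alpha''$.

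The hardest part will be verifying the length and support conditions on $\alpha''\beta([w_0])$ and $(B,x)\alpha''\beta([w_0])$. The peak-reduction was carried out on $[\tau_{x,y}\beta(w_0)]$, but the conclusion involves intermediate words obtained by applying pieces of the factorization to $\beta([w_0])$ directly. To bridge these, I will track, using Sublemma~\ref{sl:hitCform} and Lemma~\ref{le:suppmin}, exactly which letters $\tau_{x,y}$ (and hence $(B,x)$) inserts or removes, showing that the action of $\alpha''$ on $\beta([w_0])$ differs from its action on $\tau_{x,y}\beta([w_0])$ only in a controlled neighborhood of the inserted $x^{\pm 1}$'s, and that by construction $(B,x)$ is precisely the move needed to restore length $|w_0|$. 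Checking that $(A_j \cup \{y\}, x)$ satisfies the Whitehead-automorphism conditions of Lemma~\ref{le:whdef} of Day~\cite{fpraag} is a routine but necessary verification, reducing to the fact that $y \in \lkl{x}$ and $A_j \cap \lkl{x} = \emptyset$ are compatible with adjoining $y$.
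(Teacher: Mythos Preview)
Your plan has a genuine gap at its central step: the claim that the last length-decreasing factor $\alpha_j$ in the peak-reduced factorization of $\alpha$ must have multiplier $x^{\pm1}$ is not justified, and is false in general. The excess length of $\tau_{x,y}\beta([w_0])$ over $|w_0|$ can come from two sources: the extra $x^{\pm1}$'s inserted by $\tau_{x,y}$, \emph{and} the extra letters inserted by the transvections in $\beta$ (whose multipliers $a$ are by hypothesis not $x$). The shortening factors of $\alpha$ may remove the latter as well as the former, and there is no reason the final shortening step must be the one removing an $x$. Relatedly, your reason for why $\tau_{x,y}$ commutes past $\alpha_{j-1}\cdots\alpha_1$ is wrong: the relevant obstruction from the sorting identities of Day~\cite{fpraag} occurs when a factor has multiplier at $\pg{y}$, not at $\pg{x}$, and you have not ruled this out.

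The paper's proof avoids both problems by reversing the order of your two main moves. First it commutes $\tau_{x,y}$ leftward past \emph{all} the shortening factors $\delta_1\cdots\delta_q$, using exactly the hypothesis that $x$ does not strictly dominate any multiplier $a$ appearing in $\beta$: when a $\delta_i$ has multiplier at $\pg{y}$, this forces $x\sim y$, so the short-range correction term from the sorting substitution vanishes. This yields $\delta_1\cdots\delta_q\tau_{x,y}=\tau_{x,y}\phi_1\cdots\phi_q$ with the $\phi_i$ long-range. Only then does the paper peak-reduce again, but now $\phi_1\cdots\phi_q$ with respect to $\beta([w_0])$ rather than $\tau_{x,y}\beta([w_0])$. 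In this second peak-reduction one can argue (using that $\beta([w_0])$ contains a single $x$ and a single $x^{-1}$) that the unique length-increasing step $\psi_1$ has multiplier $x^{\pm1}$; setting $(B,x)=\tau_{x,y}\psi_1$ and $\alpha''=\psi_2\cdots\psi_r$ then gives the length and support conclusions directly, since the peak-reduction was carried out with respect to the correct word $\beta([w_0])$. Your approach tries to read off $(B,x)$ from a peak-reduction with respect to the wrong word, which is why the ``hardest part'' you flag at the end cannot be completed as stated.
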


\begin{proof}
By Theorem~\ref{th:threeparts}, we peak-reduce $\alpha$ with respect to $\tau_{x,y}\beta([w_0])$.
Then since $[w_0]$ is of minimal length in its $\AAG$--orbit by Corollary~\ref{co:lenmin}, 
we have 
$\alpha =\gamma_1\cdots\gamma_p\delta_1\cdots\delta_q$,
for some $\gamma_1,\ldots,\gamma_p,\delta_1,\ldots,\delta_q\in\Omega_\ell$,
where each $\gamma_i$ leaves the length of the word the same, and each $\delta_i$ shortens the word.
More precisely, if $|\tau_{x,y}\beta([w_0])|=|w_0|$, then $q=0$;
if $q>0$ then for each $i$ we have $|\delta_i\cdots\delta_q\tau_{x,y}\beta([w_0])|<|\delta_{i+1}\cdots\delta_q\tau_{x,y}\beta([w_0])|$;
and if $p>0$ then for each $i$ we have $|\gamma_i\cdots\gamma_p\delta_1\cdots\delta_q\tau_{x,y}\beta([w_0])|=|\gamma_{i+1}\cdots\gamma_p\delta_1\cdots\delta_q\tau_{x,y}\beta([w_0])|$.

Since $\beta$ is a product of transvections with multipliers in $\supp w_0$, we know that $\supp\tau_{x,y}\beta([w_0])\subset\supp w_0$.
By Lemma~\ref{le:suppmin}, we know that they are equal.
Since each $\delta_i$ decreases length, we know that:
\[\supp\delta_{i}\cdots\delta_q\tau_{x,y}\beta([w_0])\subset \supp\delta_{i+1}\cdots\delta_q\tau_{x,y}\beta([w_0])\]
Again from Lemma~\ref{le:suppmin},  we know $\supp\delta_{i}\cdots\delta_q\tau_{x,y}\beta([w_0])=\supp w_0$ for each $i$.

Temporarily fix $i$.
The automorphism $\delta_i=(A,a)$ for some $a\in L$.
Since $\delta_i$ decreases length, we know that $\pg{a}\in\supp w_0$.
By Lemma~\ref{le:suppmin}, we know $\delta_i$ cannot remove all the instances of $a^{\pm1}$ from $\delta_{i+1}\cdots\delta_q\tau_{x,y}\beta([w_0])$, so there must be extra instances of $a$ in $\delta_{i+1}\cdots\delta_q\tau_{x,y}\beta([w_0])$.
These extra instances must have been put there by $\tau_{x,y}\beta$ (since the other $\delta_j$ automorphisms are length-decreasing), so we deduce that either $\pg{a}=\pg{x}$, or that there is some $z\in\supp w_0$ such that $a\in\supp \beta(z)$.
By the hypotheses on $\beta$, this tells us that if $\pg{a}\neq\pg{x}$, then $x$ does not strictly dominate $a$.

Now we consider what happens when we try to move $\tau_{x,y}$ to the left across $\delta_i=(A,a)$.
From Lemma~\ref{le:validsubs} and Definition~\ref{de:sort}, both of Day~\cite{fpraag}, we know that they commute (at least in $\OAG$) unless $\pg{a}=\pg{y}$.
Without loss of generality we temporarily assume $a=y$.
In this case, conjugating $\tau_{x,y}$ across $\delta_i$ introduces a short range element $s((A-a+x,x))$ and a long-range element $\ell((A-a+x,x))$.
However, since $x$ does not strictly dominate $a$ and $a=y$, we know $x\geq a$ and therefore $x\sim a$.
If $x\sim a$, then the element $s((A-a+x,x))=1$.
So in any case, we add at most a single new long-range element (working in $\OAG$) and no new short-range elements.
In returning to $\AAG$ it is possible that we introduce an inner automorphism, which is a product of long-range automorphisms.
So we have shown that there is an element $\phi_i\in\langle\Omega^0_\ell\rangle$ such that
$\delta_i\tau_{x,y}=\tau_{x,y}\phi_i$.

We rewrite $\delta_1\cdots\delta_p\tau_{x,y}$ as $\tau_{x,y}\phi_1\cdots\phi_p$.
If 
$$|\tau_{x,y}\phi_1\cdots\phi_p\beta([w_0])|=|\phi_1\cdots\phi_p\beta([w_0])|$$
 then we are done; if we set $(B,x)=\tau_{x,y}$, set $\alpha'=\gamma_1\cdots\gamma_q$ and set $\alpha''=\phi_1\cdots\phi_p$ then the conclusions hold.
So assume $\tau_{x,y}$ decreases the length of the word.
From the setup, we know that $\tau_{x,y}\phi_1\cdots\phi_p\beta([w_0])$ has the same length and support as $w_0$.
Then we know that $y$ and $y^{-1}$ both appear only once in $\phi_1\cdots\phi_p\beta([w_0])$.
This means that $\tau_{x,y}$ decreases the length by $2$, removing an instance of $x$ and $x^{-1}$ each.
By the form of $\beta$, we know that $\beta([w_0])$ only has a single $x$ and a single $x^{-1}$, so $\phi_1\cdots\phi_p$ must increase the number of instances of $x$.
We have a word $\psi_1\cdots\psi_r$ in $\Omega^0_\ell$ that is a peak-reduced factorization of $\phi_1\cdots\phi_p$ with respect to $\beta([w_0])$.
Some automorphism $\psi_i$ adds an extra instance of $x$ and in doing so increases the length by $2$.
Since the factorization is peak-reduced, this automorphism must be $\psi_1=(B',x)$ (without loss of generality, we assume the multiplier is $x$ and not $x^{-1}$, since $\tau_{x,y}=\tau_{x^{-1},y^{-1}}$).
We set $B=B'\cup\{y\}$ to get $\tau_{x,y}(B',x)=(B,x)\in \Omega$.
Since $|\psi_1\cdots\psi_r\beta([w_0])|=|w_0|+2$, we know that $|\psi_2\cdots\psi_r\beta([w_0])|=|w_0|$, and therefore also that $|(B,x)\psi_2\cdots\psi_r\beta([w_0])|=|w_0|$.
Then by setting $\alpha'=\gamma_1\cdots\gamma_p$ and $\alpha'=\psi_2\cdots\psi_r$, we are done.
\end{proof}

\begin{theorem}\label{th:prw0}
If $\gamma\in\AAGo$ with $\gamma([w_0])=[w_0]$, then there is a factorization of $\gamma$ as a product of elements of $\Omega$ that is peak-reduced with respect to $[w_0]$.
\end{theorem}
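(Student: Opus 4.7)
The plan is to apply Theorem~\ref{th:threeparts} and Lemma~\ref{le:specialsrform} to put $\gamma$ into a normal form whose short-range part is a carefully ordered product of standard transvections, then iterate Lemma~\ref{le:prone} to absorb each of those transvections into a general Whitehead automorphism, and finally peak-reduce each remaining long-range block using \partref{it:lrwhalgs}.

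First I would write $\gamma = \alpha\beta$ with $\alpha \in \langle\Omega^0_\ell\rangle$ and $\beta \in \langle\Omega_s\rangle$ via Theorem~\ref{th:threeparts}, then apply Lemma~\ref{le:specialsrform} to obtain
\[\beta = \sigma\tau_{x_1,y_1}\cdots\tau_{x_r,y_r}\iota,\]
where $\sigma$ is a permutation fixing $\supp w_0$, the $x_i \in \supp w_0$ are distinct, each $x_i$ is domination-minimal among $\{x_i,\ldots,x_r\}$, and $\iota$ is $w_0$-irrelevant. Since $\gamma \in \AAGo$ and all other factors are in $\AAGo$, we have $\sigma \in \AAGo$, so $\alpha\sigma \in \langle\Omega^0_\ell\rangle$. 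Because $\iota$ fixes $[w_0]$, the product $\alpha\sigma\tau_{x_1,y_1}\cdots\tau_{x_r,y_r}$ also fixes $[w_0]$, which sets up the length/support hypothesis of Lemma~\ref{le:prone}.

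Next I would iteratively apply Lemma~\ref{le:prone}: at the $i$-th step, absorb $\tau_{x_i,y_i}$ into some $(B_i,x_i) \in \Omega$, producing new long-range factors on each side. The hypotheses propagate inductively because distinctness of the $x_j$'s gives $\pg{x_j}\neq\pg{x_i}$ for $j>i$, the minimality property from Lemma~\ref{le:specialsrform} gives that $x_i$ does not strictly dominate any $x_j$ with $j > i$, and the length/support condition needed for the next step is exactly part of the conclusion of the current step. After $r$ iterations this yields
\[\gamma = \mu_1(B_1,x_1)\mu_2(B_2,x_2)\cdots\mu_r(B_r,x_r)\mu_{r+1}\iota,\]
with each $\mu_i\in\langle\Omega^0_\ell\rangle$ and with the length of the image of $[w_0]$ equal to $|w_0|$ at every boundary between a $\mu$-block and a $(B,x)$-block.

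Finally I would peak-reduce each $\mu_i$ using \partref{it:lrwhalgs} in the pure form of Remark~\ref{re:purewhalg}, and factor $\iota$ as a product of $w_0$-irrelevant short-range transvections, each of which individually fixes $[w_0]$. The main obstacle is verifying that the full concatenation is peak-reduced with respect to $[w_0]$. Within each $\mu_i$ this is given, and through $\iota$ the length is constantly $|w_0|$. At every boundary between blocks the lengths on both sides equal $|w_0|$, which by Corollary~\ref{co:lenmin} is the minimum length achieved in the orbit, so any triple of consecutive lengths straddling a boundary has both outer terms at least as large as the middle term, forcing equality and ruling out any non-flat peak.
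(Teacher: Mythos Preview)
Your proposal is correct and follows essentially the same approach as the paper's proof: factor via Theorem~\ref{th:threeparts}, normalize the short-range part with Lemma~\ref{le:specialsrform}, iterate Lemma~\ref{le:prone} to absorb the $\tau_{x_i,y_i}$'s, then peak-reduce the remaining long-range blocks. The only cosmetic differences are that the paper commutes the permutation $\sigma$ all the way to the left (writing $\alpha\sigma'=\sigma'\alpha'$) before peak-reducing, and spends a paragraph using Lemma~\ref{le:suppmin} to verify the support condition for the base case, whereas you more directly observe that $\alpha\sigma\tau_{x_1,y_1}\cdots\tau_{x_r,y_r}$ fixes $[w_0]$ since $\iota$ does.
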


\begin{proof}
By Theorem~\ref{th:threeparts}, we factor $\gamma$ as $\alpha\beta$, where $\beta\in\langle\Omega_s\rangle$ and $\alpha\in\langle\Omega^0_\ell\rangle$.
By Lemma~\ref{le:specialsrform}, we write $\beta$ as $\sigma'\tau_{x_1,y_1}\cdots\tau_{x_r,y_r}\iota$, where $\sigma'$ is a permutation, $\iota$ is $w_0$--irrelevant, and the $\tau_{x_i,y_i}$ are short-range transvections such that $\{x_1,\ldots, x_r\}$ lie over distinct vertices and each $x_i$ is domination-minimal among $\{x_i,\ldots,x_r\}$.
Now we rewrite $\alpha\sigma'$ as $\sigma'\alpha'$, where $\alpha'\in\langle\Omega^0_\ell\rangle$.
By Theorem~\ref{th:threeparts}, we have a factorization $\alpha'=\sigma''\delta_1\cdots\delta_p$ which is peak-reduced with respect to $\tau_{x_1,y_1}\cdots\tau_{x_r,y_r}\iota([w_o])$, where $\sigma''$ is a permutation automorphism and each $\delta_i$ is a non-permutation automorphism in $\Omega^0_\ell$.
We set $\sigma=\sigma'\sigma''$, so that we have
$\gamma=\sigma\delta_1\cdots\delta_p\tau_{x_1,y_1}\cdots\tau_{x_r,y_r}\iota$.

Since each $x_i\in\supp w_0$, we deduce that no $\tau_{x_i,y_i}$ changes the support of $w_0$ (if it did, this would contradict Lemma~\ref{le:suppmin}).
By Corollary~\ref{co:lenmin}, we know that $|\tau_{x_1,y_1}\cdots\tau_{x_r,y_r}([w_0])|\geq|w_0|$, so since $\alpha$ is peak-reduced, each $\delta_i$ either shortens $\delta_{i+1}\cdots\delta_p\tau_{x_1,y_1}\cdots\tau_{x_r,y_r}([w_0])$ or leaves its length unchanged.
Since $\delta_i$ acts by a single multiplier, this means $\delta_i$ either leaves $\supp\delta_{i+1}\cdots\delta_p\tau_{x_1,y_1}\cdots\tau_{x_r,y_r}([w_0])$ the same or removes a single element.
However, if this support is equal to $\supp w_0$, removing an element would contradict Lemma~\ref{le:suppmin}.
So inductively, we deduce that:
$$\supp\delta_{1}\cdots\delta_p\tau_{x_1,y_1}\cdots\tau_{x_r,y_r}([w_0])=\supp w_0$$
Since $\delta_{1}\cdots\delta_p\tau_{x_1,y_1}\cdots\tau_{x_r,y_r}([w_0])$ differs from $[w_0]$ by the permutation $\sigma$, we know that their lengths are the same.

Now inductively assume that we have written $\delta_1\cdots\delta_p\tau_{x_1,y_1}\cdots\tau_{x_r,y_r}$ as a product
$\phi_0(A_1,x_1)\phi_1\cdots\phi_{j-2}(A_{j-1},x_{j-1})\phi'_{j-1}\tau_{x_j,y_j}\cdots\tau_{x_r,y_r}$,
with $\phi'_{j-1}\in\langle\Omega^0_\ell\rangle$, and with $\phi_i\in\langle\Omega^0_\ell\rangle$ and $(A_i,x_i)\in\Omega$ for each $i$.
Also suppose that for each $i$,  
\begin{align*}
&(A_i,x_i)\phi_i\cdots(A_{j-1},x_{j-1})\phi'_{j-1}\tau_{x_j,y_j}\cdots\tau_{x_r,y_r}([w_0])\\
\tag*{\mbox{and}}&\phi_i\cdots(A_{j-1},x_{j-1})\phi'_{j-1}\tau_{x_j,y_j}\cdots\tau_{x_r,y_r}([w_0])
\end{align*}
have the same length and support as $w_0$.
The base case for this induction has $\phi'_0=\delta_1\cdots\delta_p$.

Then we simply apply Lemma~\ref{le:prone} to $\phi'_{j-1}\tau_{x_j,y_j}\cdots\tau_{x_r,y_r}$ and get the same statement with $j$ instead of $j-1$.
After we have done this $r$ times, we get
\[\gamma=\sigma\phi_0(A_1,x_1)\phi_1\cdots(A_r,x_r)\phi'_r\iota\]
Peak-reduce each $\phi_i$ with respect to $(A_{i},x_{i})\phi_{i}\cdots(A_r,x_r)\phi_r([w_0])$, peak-reduce $\phi'_r$ with respect to $[w_0]$, and 
write out $\iota$ as a product of $w_0$--irrelevant transvections; this is a peak-reduced factorization of $\gamma$.
\end{proof}

The following ideas appear for free groups in Lyndon--Schupp~\cite{ls} and are closely related to the work of McCool in~\cite{mccool}.

\begin{definition}\label{de:whiteheadg}
We construct a labeled, directed multi-graph $\overline \Delta$ whose vertices are conjugacy classes of $A_\Gamma$ with length equal to $|w_0|=4k$, where we place a directed edge from $[u]$ to $[v]$ if there is a Whitehead automorphism $\alpha\in\Omega$ with $\alpha([u])=[v]$.
We label this directed edge by $\alpha$.
The \emph{Whitehead graph} $\Delta$ of $[w_0]$ is the (undirected) connected component of $[w_0]$ in $\overline \Delta$. 
\end{definition}

Since there are only finitely many words of length $4k$, there are only finitely many conjugacy classes of length $4k$.
Since $\Omega$ is finite, this means that $\Delta$ is a finite graph with only finitely many edges between any two vertices.

\begin{corollary}\label{co:w0gpfg}
The group $(\AAGo)_{[w_0]}$ of automorphisms in $\AAGo$ preserving $[w_0]$ is finitely generated. 
\end{corollary}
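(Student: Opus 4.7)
The plan is a McCool-style argument using the finite Whitehead graph $\Delta$ from Definition~\ref{de:whiteheadg}, with Theorem~\ref{th:prw0} as the key input. Given $\gamma \in (\AAGo)_{[w_0]}$, apply Theorem~\ref{th:prw0} to produce a peak-reduced factorization $\gamma = \alpha_n\cdots\alpha_1$ with each $\alpha_i \in \Omega$. Since by Corollary~\ref{co:lenmin} the class $[w_0]$ is of minimal length in its $\AAG$-orbit, peak reduction forces every intermediate class $[w_i] := \alpha_i\cdots\alpha_1([w_0])$ to have length exactly $|w_0|$. Hence each $[w_i]$ is a vertex of $\Delta$, and each $\alpha_i$ labels a directed edge in $\Delta$ from $[w_{i-1}]$ to $[w_i]$.

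Next I build a finite candidate generating set. Because $\Delta$ has only finitely many vertices and (by Definition~\ref{de:whiteheadg}) only finitely many edges, I fix once and for all, for each vertex $[v]$ of $\Delta$, an element $g_{[v]} \in \AAG$ realized as a product of Whitehead automorphisms along some chosen path in $\Delta$ from $[w_0]$ to $[v]$, with $g_{[w_0]} = 1$. For every directed edge from $[u]$ to $[v]$ in $\Delta$ labeled by $\alpha \in \Omega$, set
\[
h_{[u],[v],\alpha} \;:=\; g_{[v]}^{-1}\,\alpha\,g_{[u]} \;\in\; (\AAG)_{[w_0]}.
\]
The collection of all such $h$ forms a finite set $T$.

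For $\gamma$ as above, the telescoping identity
\[
\gamma \;=\; \bigl(g_{[w_n]}^{-1}\alpha_n g_{[w_{n-1}]}\bigr)\bigl(g_{[w_{n-1}]}^{-1}\alpha_{n-1} g_{[w_{n-2}]}\bigr)\cdots\bigl(g_{[w_1]}^{-1}\alpha_1 g_{[w_0]}\bigr),
\]
together with $g_{[w_0]} = g_{[w_n]} = 1$, exhibits $\gamma$ as a word in $T$. Thus $(\AAGo)_{[w_0]} \subseteq \langle T\rangle$, while trivially $\langle T\rangle \subseteq (\AAG)_{[w_0]}$; combining these gives $(\AAGo)_{[w_0]} = \langle T\rangle \cap \AAGo$.

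The main obstacle is that the path-products $g_{[v]}$ may involve arbitrary Whitehead automorphisms, including graphic ones, so $T$ need not be contained in $\AAGo$. I would resolve this with the finite-index fact noted in Section~\ref{se:KZisker}: since $\AAGo$ has finite index in $\AAG$, the subgroup $\langle T\rangle \cap \AAGo$ has finite index in the finitely generated group $\langle T\rangle$, and so is itself finitely generated by the standard theorem that finite-index subgroups of finitely generated groups are finitely generated.
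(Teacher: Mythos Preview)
Your proof is correct and follows essentially the same McCool-style argument as the paper: peak-reduce via Theorem~\ref{th:prw0}, observe that minimality (Corollary~\ref{co:lenmin}) forces all intermediate classes to sit in $\Delta$, and conclude using finite generation of $\pi_1(\Delta,[w_0])$ (your spanning-tree generators $T$ are just an explicit generating set for the image of $\pi_1$).

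The one substantive difference is your final paragraph. The paper asserts directly that loops in $\Delta$ give elements of $\AAGo$, hence that $\pi_1(\Delta,[w_0])$ surjects onto $(\AAGo)_{[w_0]}$; you instead allow $\langle T\rangle$ to land in $(\AAG)_{[w_0]}$ and recover $(\AAGo)_{[w_0]}=\langle T\rangle\cap\AAGo$ via the finite index of $\AAGo$ in $\AAG$. Your extra care here is warranted, since edges of $\Delta$ labeled by graphic automorphisms need not lie in $\AAGo$, and your finite-index fix is a clean way to close this. (Minor point: the finite-index fact you cite is stated in the Background section, not Section~\ref{se:KZisker}.)
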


\begin{proof}
A path in $\Delta$ determines an element of $\AAGo$ by composing the labels along the edges.
Further, if $\alpha$ is the automorphism determined by a path from the vertex $[w_1]$ to the vertex $[w_2]$, we know that $\alpha([w_1])=[w_2]$ (this is true by definition for a path of length one and remains true under concatenation).
In particular, this defines a map $\pi_1(\Delta,[w_0])\to (\AAGo)_{[w_0]}$.
If $\alpha\in(\AAGo)_{[w_0]}$, then by Theorem~\ref{th:prw0}, there is a factorization $\beta_k\cdots\beta_1$ of $\alpha$ by elements of $\Omega$ that is peak reduced with respect to $[w_0]$.
By Corollary~\ref{co:lenmin}, this means that for each $i$, $\beta_i\cdots\beta_1([w_0])$ is a vertex in $\Delta$, and $\beta_i$ is an edge from $\beta_{i-1}\cdots\beta_1([w_0])$ to $\beta_i\cdots\beta_i([w_0])$.
So $\beta_k\cdots\beta_1$ describes a path in $\Delta$ that maps to $\alpha$.
So the finitely generated group $\pi_1(\Delta,[w_0])$ surjects on $(\AAGo)_{[w_0]}$.
\end{proof}

\begin{remark}
There are normal forms for elements of $A_\Gamma$ (see, for example, van Wyk~\cite{vw}), so there is an effective procedure to produce the Whitehead graph of $w_0$.
Of course, this means that there is a procedure to give a generating set for $(\AAGo)_{[w_0]}$.
Unfortunately, $\Delta$ can be large for simple examples and it appears to be difficult to use this method to write down specific generating sets.
McCool has explored this for the case where $A_\Gamma$ is a free group and $(\AAGo)_{[w_0]}$ is a mapping class group in~\cite{mcmcg}.
This procedure does not produce the familiar generating sets for the mapping class group given by Dehn twists.
\end{remark}

\subsection{Finite generation of $\Mod(\Gamma,w_0,Q)$}
\newcommand\trans{\mathrm{trans}\,}

This subsection is devoted to finishing the proof of Theorem~\ref{mt:finitelygenerated}.
Recall the Whitehead graph $\Delta$ from Definition~\ref{de:whiteheadg}.   

\begin{lemma}\label{le:whtreeform}
The graph $\Delta$ has a maximal tree $T$ such that the set of all edges in $T$ that are non-permutation automorphisms forms a subtree containing $[w_0]$.
\end{lemma}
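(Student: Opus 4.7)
The idea is to first choose a spanning tree for the non-permutation part of $\Delta$ and then extend it using only permutation edges. Let $\Delta_{np}$ be the spanning subgraph of $\Delta$ obtained by keeping every vertex but only those edges labeled by type~(2) Whitehead automorphisms. Let $V_0$ be the vertex set of the connected component of $[w_0]$ in $\Delta_{np}$, and pick any spanning tree $T_0$ of this component. Every edge of $T_0$ is then non-permutation, and $T_0$ is a tree containing $[w_0]$. If I can extend $T_0$ to a maximal tree $T$ of $\Delta$ by adjoining only permutation edges, then the non-permutation edges of $T$ will be exactly those of $T_0$, and the lemma will follow.

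Let $\Sigma\subset\Omega$ denote the finite group of type~(1) Whitehead automorphisms. The extension is enabled by the key claim that every vertex $[v]\in\Delta$ lies in the $\Sigma$-orbit of some vertex of $V_0$. Granting this, each $[v]\in\Delta-V_0$ admits a $\sigma\in\Sigma$ with $\sigma^{-1}([v])\in V_0$, so the edge in $\Delta$ labeled by $\sigma$ is a permutation edge joining $[v]$ to a vertex of $V_0$. Attaching each $[v]\in\Delta-V_0$ to $V_0$ by one such edge produces a maximal tree $T\supset T_0$ as a leaf attachment, and by construction its only non-permutation edges are the edges of $T_0$.

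I would prove the claim by induction on the length of a path from $[w_0]$ to $[v]$ in $\Delta$. The base case $[v]=[w_0]\in V_0$ is trivial. For the inductive step, write $[v]=\alpha([u])$ with $\alpha\in\Omega$ and $[u]$ at distance one less from $[w_0]$; by inductive hypothesis $[u]=\sigma([w'])$ for some $\sigma\in\Sigma$ and $[w']\in V_0$. If $\alpha$ is type~(1) then $\alpha\sigma\in\Sigma$ and $[v]\in\Sigma\cdot V_0$ immediately. Otherwise $\alpha=(A,a)$ is type~(2); setting $\alpha'=\sigma^{-1}\alpha\sigma=(\sigma^{-1}(A),\sigma^{-1}(a))$ gives another type~(2) Whitehead automorphism with $[v]=\sigma(\alpha'([w']))$. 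Since $\sigma$ preserves word length, $|\alpha'([w'])|=|\sigma^{-1}([v])|=|w_0|$, so $\alpha'([w'])$ is a vertex of $\Delta$ joined to $[w']$ by a non-permutation edge of $\Delta_{np}$; hence $\alpha'([w'])\in V_0$ and $[v]\in\Sigma\cdot V_0$.

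The main obstacle is the inductive step of the claim, and in particular verifying that conjugating a type~(2) Whitehead automorphism $(A,a)$ by an element of $\Sigma$ again lies in $\Omega$ as a type~(2) automorphism. Length preservation is automatic since elements of $\Sigma$ act by length-preserving automorphisms, and the fact that $\sigma(A,a)\sigma^{-1}=(\sigma(A),\sigma(a))$ satisfies the conditions of Lemma~\ref{le:whdef} of Day~\cite{fpraag} follows because $\sigma$ permutes $L$ while preserving the underlying graph and its link and star structure. Once the claim is established, assembling $T$ by attaching each outside vertex to $T_0$ via one permutation edge produces the desired tree.
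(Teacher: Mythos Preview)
Your proof is correct and follows essentially the same approach as the paper: both take a spanning tree $T_0$ of the non-permutation component of $[w_0]$, then show that every other vertex of $\Delta$ is reached from this component by a single permutation edge, via the same conjugation identity $\sigma^{-1}(A,a)\sigma=(\sigma^{-1}A,\sigma^{-1}a)$. The paper phrases this as rewriting a path by repeatedly swapping $\alpha\cdot\sigma\mapsto\sigma\cdot(\sigma^{-1}\alpha\sigma)$ until all permutations are collected at the end, whereas you package the same argument as an induction on path length; the content is identical.
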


\begin{proof}
We take $\Delta'$ to be the connected component of $[w_0]$ in the subgraph of $\Delta$ gotten by deleting the edges labeled with permutation automorphisms.
We take $T'$ to be a maximal tree for $\Delta'$.

Now, for each vertex $[w]$ of $\Delta$ not in $\Delta'$, there is a path $p$ from $[w_0]$ to $[w]$ in $\Delta$ (paths in $\Delta$ are written in function composition order).
If $\sigma$ is a permutation automorphism in $\Omega$, and $\alpha$ is a non-permutation Whitehead automorphism in $\Omega$ such that $\alpha\cdot \sigma$ is a segment in $p$, then by Equation~\ref{eq:R6} of Day~\cite{fpraag}, $\sigma \cdot (\sigma^{-1}\alpha\sigma)$ is another segment of length two in $\Delta$ connecting the same initial and terminal vertices.
Note that since $\sigma$ leaves $\supp w_0$ invariant, we know that $\sigma^{-1}\alpha\sigma\in\Omega$.
We modify $p$ by substituting this second segment in for the first one.
By repeating substitutions like this whenever possible, and multiplying the permutation automorphisms together as a single permutation, we get a path $p'$ from $[w_0]$ to $[w]$ of the form
\[\sigma_{[w]}\cdot (A_m,a_m) \cdots (A_1,a_1)\]
where $\sigma_{[w]}$ is a permutation automorphism and each $(A_i,a_i)\in\Omega$.

We already have a path in $T'$ from $[w_0]$ to $(A_m,a_m)\cdots(A_1,a_1)([w_0])$, so we can add the edge $\sigma_{[w]}$ starting at $(A_m,a_m)\cdots(A_1,a_1)([w_0])$ to $T'$ to get a tree containing $[w]$.
It is obvious that if we add an edge gotten in this manner to $T'$ for each vertex not in $T'$, we will get a maximal tree for $\Delta$ satisfying the conclusions of the lemma.
\end{proof}

\begin{definition}
For $(A,a)\in\Omega$, the \emph{transvection set} $\trans (A,a)$ is the set of $x\in X$ with $x\in A$ or $x^{-1}\in A$, but not both.
\end{definition}

\begin{lemma}\label{le:whedgeform}
The graph $\Delta$ of $w_0$ has a maximal tree $T$ satisfying the following condition:
for each edge $\alpha$ originating at a vertex $[w]$, either $\alpha$ is a permutation automorphism or a Whitehead automorphism $\alpha=(A,a)$ with $a\in\supp w$ and $\trans \alpha\subset\supp w$. 
\end{lemma}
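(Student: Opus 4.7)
My plan is to construct the tree $T$ by taking the tree produced by Lemma~\ref{le:whtreeform} and then \emph{relabelling} each of its non-permutation edges without changing the endpoints. The point is that an edge in $\Delta$ is determined by its endpoints together with some Whitehead automorphism taking one to the other; any other Whitehead automorphism in $\Omega$ that does the same thing at the cyclic word level is an equally valid label. So it suffices to show that for each non-permutation edge $\alpha=(A,a)$ from $[w]$ to $[w']$, there is some $(A',a') \in \Omega$ with $(A',a')([w]) = [w']$, $a' \in \supp w$, and $\trans(A',a') \subset \supp w$.

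For the transvection-set condition I would use a simple trimming principle: letters of $L$ not in $(\supp w)^{\pm1}$ do not appear in a cyclic representative of $w$, so removing them from $A$ produces an automorphism that acts on $[w]$ in exactly the same way. Concretely I would set $A' = A \cap ((\supp w)^{\pm1} \cup \{a, a^{-1}\})$ and verify that $(A',a) \in \Omega$ using Lemma~\ref{le:whdef} of~\cite{fpraag}; this gives $\trans(A',a) \subset \supp w$ immediately.

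For the multiplier condition I would show that if $a \notin \supp w$, then $\alpha([w]) = [w]$, so the edge is a loop and cannot appear in a spanning tree at all. The heart of this step is the observation that when $a \notin \supp w$, applying $\alpha$ to a cyclic representative of $w$ inserts copies of $a^{\pm 1}$ at positions determined by $A$, and for $|\alpha([w])| = |w| = 4k$ (required because $[w']$ is a vertex of $\Delta$), every inserted $a^{\pm1}$ must cancel after graphical reduction. By tracking where these $a$'s are inserted and which intervening letters lie in $\lk(a)$, I would show that full cancellation is possible only if $\alpha$ acts on $[w]$ as conjugation by some power of $a$, which is trivial on cyclic words.

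The main technical obstacle is this last cancellation analysis: inserted copies of $a$ can cancel with neighbours far away in the cyclic representative, provided all letters between them commute with $a$. I would handle this by partitioning a cyclic representative of $w$ into maximal subsegments whose letters lie in $\lk(a)$, separated by letters outside $\st(a)$, and then analysing how the insertions interact with this partition; the argument is analogous in spirit to the case analysis in Sublemma~\ref{sl:hitCform}. Once both trimming and the multiplier condition have been enforced at every non-permutation edge of the starting tree, the result is still a maximal tree of $\Delta$ satisfying the conclusion of the lemma.
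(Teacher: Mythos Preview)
Your approach is essentially the same as the paper's: start from a maximal tree, observe that a multiplier outside $\supp w$ forces a loop (impossible in a tree), and trim each remaining edge so that its transvection set lies in $\supp w$.  Two remarks are worth making.

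First, your multiplier argument is more elaborate than necessary.  You aim to show that when $a\notin\supp w$ and all inserted copies of $a^{\pm1}$ cancel, the automorphism acts on $w$ as conjugation by a power of $a$, via a partition of $w$ into $\lk(a)$-blocks.  The paper gets away with much less: since $a\notin\supp w$, the letter-by-letter image $(A,a)(w)$ is $w$ with some $a^{\pm1}$ inserted; the original letters of $w$ cannot cancel (they could not before, and inserting extra letters creates no new cancellations among them), so if the reduced cyclic length is still $|w|$ then every inserted $a^{\pm1}$ has cancelled and the reduced cyclic word is $w$ again.  No block decomposition or ``conjugation by $a^k$'' claim is needed.

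Second, your trimming $A'=A\cap((\supp w)^{\pm1}\cup\{a,a^{-1}\})$ removes \emph{conjugation pairs} $x,x^{-1}\in A$ with $\pg{x}\notin\supp w$, whereas the paper factors $(A,a)=(A_1,a)(A_2,a)$ with $(A_2,a)$ a product of transvections only (so $A_1$ retains all conjugation pairs).  Both choices act identically on $[w]$, but you should check that your $A'$ still satisfies the hypotheses of Lemma~\ref{le:whdef} of~\cite{fpraag}; dropping a conjugation pair can in principle interact with the component conditions there.  The paper's choice sidesteps this, since removing a single $x$ with $x^{-1}\notin A$ and $a\geq x$ clearly preserves the Whitehead conditions.
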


\begin{proof}
Start with an arbitrary maximal tree $T_0$.
Suppose we have an edge $(A,a)$ of $T_0$ originating at a vertex $[w]$.
Fix a cyclic representative $w$ of $[w]$, and consider the obvious representative $w'$ of $(A,a)([w])$ based on $w$.
If $a\notin \supp w$, then $w'$ is the same as $w$ with some instances of $a$ added in.
Since these are both graphically reduced representatives of conjugacy classes of the same length, we deduce that in fact, $(A,a)([w])=[w]$.
However, since $T_0$ is a tree, we cannot have a loop $(A,a)$, so it must be that $a\in\supp w$.
If $\trans (A,a)\not\subset \supp w$, then we can rewrite $(A,a)$ as $(A_1,a)(A_2,a)$ where $\trans (A_1,a)\subset\supp w$ and $(A_2,a)$ is a product of transvections with $\trans (A_2,a)\cap\supp w=\emptyset$. 
In this case, we know that $(A_2,a)([w])=[w]$, and therefore $(A_1,a)([w])=(A,a)([w])$.
We replace the edge $(A,a)$ with the edge $(A_1,a)$.
Of course, we can repeat this procedure with each edge of $T_0$ to obtain a tree $T$ which satisfies the conclusions of the lemma.
\end{proof}
Note that if the tree $T_0$ above satisfies the conclusions of Lemma~\ref{le:whtreeform}, then the final tree $T$ does as well.
So at this point we fix a maximal tree $T$ in $\Delta$ that satisfies the conclusions of Lemma~\ref{le:whtreeform} and Lemma~\ref{le:whedgeform}.
Let $T'$ be the subtree of $T$ whose edges are non-permutation Whitehead automorphisms.
For each vertex $[w]\in T'$, let $\alpha_{[w]}\in\AAGo$ be the product of edge labels in the edge path in $T'$ from $[w_0]$ to $[w]$.

\begin{lemma}\label{le:awform}
For $[w]\in T'$, we have $\supp [w]=\supp w_0$.
In particular, $\alpha_{[w]}$ is an automorphism of the form $(A_m,a_m)\cdots(A_1,a_1)$ where for each $i$, $a_i\in \supp w_0$ and $\trans(A_i,a_i)\subset\supp w_0$.
\end{lemma}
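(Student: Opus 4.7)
The plan is to induct on the edge-path distance $d$ in $T'$ from $[w_0]$ to $[w]$. The base case $d=0$ is trivial: $[w]=[w_0]$, $\alpha_{[w]}$ is the empty product, and $\supp[w]=\supp w_0$.

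For the inductive step, I would let $[w]$ be at distance $d$ with predecessor $[v]\in T'$ along an edge labeled by a non-permutation Whitehead automorphism $(A,a)\in\Omega$. The inductive hypothesis applied to $[v]$ yields $\supp v=\supp w_0$ and a factorization $\alpha_{[v]}=(A_{m-1},a_{m-1})\cdots(A_1,a_1)$ in which each $a_i\in\supp w_0$ and $\trans(A_i,a_i)\subset\supp w_0$. Setting $(A_m,a_m)=(A,a)$, we have $\alpha_{[w]}=(A_m,a_m)\alpha_{[v]}$, and Lemma~\ref{le:whedgeform} applied to the edge $(A,a)$ originating at $[v]$ gives $a\in\supp v=\supp w_0$ and $\trans(A,a)\subset\supp v=\supp w_0$. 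This already proves the ``in particular'' clause.

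It remains to verify $\supp[w]=\supp w_0$. For $\supp[w]\subset\supp w_0$, I would observe that a type~(2) Whitehead automorphism $(A,a)$ sends each letter $x\in L-\{a,a^{-1}\}$ to an element of $\{x,xa,a^{-1}x,a^{-1}xa\}$ and fixes $a$; hence applying $(A,a)$ letter-by-letter to any graphically reduced cyclic representative of $v$ and graphically reducing gives a word whose support is contained in $\supp v\cup\{\pg a\}$. Since $a\in\supp v$, this yields $\supp[w]\subset\supp v=\supp w_0$. For the reverse inclusion, each factor $(A_i,a_i)$ decomposes as a product of dominated transvections and partial conjugations, so $\alpha_{[w]}\in\AAGo$; Lemma~\ref{le:suppmin} then gives $|C\cap\supp[w]|\geq|C\cap\supp w_0|$ for every domination equivalence class $C\subset X$, and summing over $C$ yields $|\supp[w]|\geq|\supp w_0|$. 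Combined with $\supp[w]\subset\supp w_0$, this forces equality.

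The argument is essentially bookkeeping: the structural facts are already in hand (Lemma~\ref{le:whedgeform} constrains edges of $T$, and Lemma~\ref{le:suppmin} controls how elements of $\AAGo$ can modify supports), and one simply propagates them along the path in $T'$. The only point that requires any care is that the inclusion $\supp[w]\supset\supp w_0$ is not obvious letter-by-letter and must be extracted from Lemma~\ref{le:suppmin}, which in turn depends on $\alpha_{[w]}$ lying in $\AAGo$; this is exactly why we needed $T'$ to consist of type~(2) (non-permutation) Whitehead automorphisms.
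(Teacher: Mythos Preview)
Your proof is correct and follows essentially the same inductive strategy as the paper: both argue along the edge-path in $T'$, use Lemma~\ref{le:whedgeform} to get $a\in\supp[v]$ and $\trans(A,a)\subset\supp[v]$, and obtain $\supp[w]\subset\supp w_0$ by the letter-by-letter observation. The one substantive difference is in the reverse inclusion: the paper deduces $\supp[w']=\supp[w]$ for each edge directly from the fact that all vertices of $\Delta$ have length $|w_0|$, whereas you invoke Lemma~\ref{le:suppmin} (summed over domination classes) to get $|\supp[w]|\geq|\supp w_0|$. Your route is arguably more transparent, since the ``same length'' step in the paper is terse and ultimately rests on the same support-minimality phenomenon; your version makes the dependence on Lemma~\ref{le:suppmin} explicit.
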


\begin{proof}
Suppose $[w],[w']\in \Delta$ and we have $(A,a)$ with $a\in\supp [w]$ and $(A,a)([w])=[w']$.
Then $\supp [w']\subset \supp[w]\cup\{a\}=\supp[w]$.
Since the vertices of $\Delta$ are conjugacy classes of the same length as $[w_0]$, it follows that $\supp [w']=\supp[w]$.
It then follows from the definition of $T'$ that for all $[w]\in T'$, we have $\supp [w]=\supp w_0$.
The second statement in the lemma then follows from the first one.
\end{proof}

\begin{definition}
For $[w]\in T'$ (possibly $[w]=[w_0]$), the automorphism $\alpha_{\beta([w])}^{-1}\beta\alpha_{[w]}$ in $(\AAGo)_{[w_0]}$ is an \emph{edge generator} if $\beta$ is an edge in $\delta$ originating at $[w]$ with $\beta([w])\in T'$ and either 
\begin{itemize}
\item $\beta$ is a permutation automorphism fixing $(\supp Q)^{\pm1}$ pointwise, or
\item $\beta=(B,b)$ and $\trans\beta\subset \supp w_0$.
\end{itemize}
Define the set $S_e$ to be the set of edge generators.

Define the set $S_i\subset (\AAGo)_{[w_0]}$, the set of \emph{$w_0$--independent generators}, to be the set of elements $\tau_{a,b}$ where $b\in\supp Q$ and $a\in X$ with $a\geq b$, together with the inversions with respect to elements of $\supp Q$.

Define the set $S_Q\subset (\AAGo)_{([w_0],Q)}$, the set of \emph{lifted $Q$--transvections and $Q$--inversions}, to be the set of permutation automorphisms inducing a $Q$--inversion together with those products of transvections of length 1 or 2 that induce standard dominated $Q$--transvections in $\Aut H_\Gamma$.

Define the set $S_k\subset (\AAGo)_{([w_0],Q)}$, the set of \emph{kernel generators}, to be the set of elements of the following forms:
\begin{itemize}
\item automorphisms $\tau_{[x,y],c}$ (as in Section~\ref{se:KZisker}) where $x,y\in X$, $c\in\supp Q$, and $x,y\geq c$,
\item partial conjugations $c_{x,\{c\}}$, where $x\in X$, $c\in\supp Q$, and $x\geq c$, and
\item conjugations $c_x$, where $x\in X$.
\end{itemize}
\end{definition}
We call $S_k$ the set of kernel generators because these generators will be part of our generating set and they lie in the kernel of the homology representation.

Our next intermediate goal is the following.
\begin{proposition}\label{pr:Sektgens}
The finite set $S_e\cup S_k\cup S_i$ generates $(\AAGo)_{[w_0]}$.
\end{proposition}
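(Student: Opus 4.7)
The strategy is to combine the $\pi_1$-theoretic surjection $\pi_1(\Delta,[w_0])\twoheadrightarrow(\AAGo)_{[w_0]}$ from Corollary~\ref{co:w0gpfg} with the peak-reduction tools from the previous subsection. Given $\gamma\in(\AAGo)_{[w_0]}$, Theorem~\ref{th:prw0} provides a peak-reduced factorization $\gamma=\beta_n\cdots\beta_1$ by elements of $\Omega$; Corollary~\ref{co:lenmin} and Lemma~\ref{le:suppmin} then force each intermediate conjugacy class $[v_i]:=\beta_i\cdots\beta_1([w_0])$ to be a vertex of $\Delta$ whose support equals $\supp w_0$. Using the maximal tree $T$ and subtree $T'$ supplied by Lemmas~\ref{le:whtreeform} and~\ref{le:whedgeform}, together with the tree-path products $\alpha_{[v_i]}$ for $[v_i]\in T'$, I telescope
\[\gamma=\prod_{i=1}^n\bigl(\alpha_{[v_i]}^{-1}\beta_i\alpha_{[v_{i-1}]}\bigr)\]
with $\alpha_{[w_0]}=1$, reducing the problem to showing that each single-edge factor lies in $\langle S_e\cup S_k\cup S_i\rangle$.

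For a non-permutation edge $\beta_i=(B,b)$ between vertices of $T'$, I decompose $(B,b)=(B_1,b)(B_2,b)$ with $\trans(B_1,b)\subset\supp w_0$ and $\trans(B_2,b)\cap\supp w_0=\emptyset$. The first factor, together with its tree-path conjugators, yields an $S_e$-generator by the shape condition of Lemma~\ref{le:whedgeform}. The residual $(B_2,b)$ acts trivially on $[w_0]$ since its transvection set lies outside $\supp w_0$, so it sits in the subgroup of $\AAGo$ pointwise-fixing $\supp w_0$; its image in $\Aut H_\Gamma$ is realized by transvections in $S_i$ and $\supp Q$-inversions, while the kernel correction is captured by $S_k$ via Proposition~\ref{pr:KZisker} applied with $Z=\supp Q$. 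For a permutation edge $\beta_i$ that does not fix $(\supp Q)^{\pm1}$ pointwise, I split $\beta_i=\sigma_2\sigma_1$ where $\sigma_1$ fixes $(\supp Q)^{\pm1}$ pointwise (and is therefore an $S_e$-generator) and $\sigma_2$ is supported on $(\supp Q)^{\pm1}$; the latter lifts to a product of $\supp Q$-inversions from $S_i$, certain $Q$-transvection products (also in $S_i$), and an $S_k$-correction from the same Proposition~\ref{pr:KZisker}.

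Finally, when some $[v_i]\in T\setminus T'$, Lemma~\ref{le:whtreeform} guarantees a string of permutation edges in $T$ connecting $[v_i]$ back to $T'$; I absorb these permutations into the adjacent $\beta_i$ and $\beta_{i+1}$ by conjugating across them (permutations in $\Omega$ normalize Whitehead automorphisms, cf.\ Equation~(\ref{eq:R6}) of Day~\cite{fpraag}), reducing to the previous case. The principal obstacle is showing that the residual $(B_2,b)$ and $\sigma_2$ really lie in $\langle S_i\cup S_k\rangle$ — equivalently, that the subgroup of $\AAGo$ acting trivially on $\supp w_0$ and interacting only with $\supp Q$ is generated by $S_i\cup S_k$; this is exactly the role of Proposition~\ref{pr:KZisker}, which identifies the homology kernel of the $\supp Q$-subgroup with the group generated by the $K_{\supp Q}$-style generators comprising $S_k$, so that the quotient by that kernel is captured precisely by $S_i$.
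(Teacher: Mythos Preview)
Your overall architecture matches the paper's: telescope along the tree, split each edge into an $S_e$-piece and a residual $S_i$-piece, and control the residual via homology. The gap is in that last step. After splitting $\beta_i=(B_1,b)(B_2,b)$, the telescoped factor is
\[
\alpha_{[v_i]}^{-1}\beta_i\alpha_{[v_{i-1}]}=\bigl(\alpha_{[v_i]}^{-1}(B_1,b)\alpha_{[v_{i-1}]}\bigr)\cdot\bigl(\alpha_{[v_{i-1}]}^{-1}(B_2,b)\alpha_{[v_{i-1}]}\bigr),
\]
so what you must place in $\langle S_e\cup S_i\cup S_k\rangle$ is not $(B_2,b)$ itself but its \emph{conjugate} by the tree-path element $\alpha_{[v_{i-1}]}$ (and similarly $\alpha_{[v_1]}^{-1}\sigma_2\alpha_{[v_1]}$ in the permutation case). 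You treat this as ``the principal obstacle is showing $(B_2,b)$ and $\sigma_2$ lie in $\langle S_i\cup S_k\rangle$,'' which is the wrong target.

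Proposition~\ref{pr:KZisker} does not close this gap. That proposition identifies $K_Z$ as the kernel of $G_Z\to\Aut H_\Gamma$, where $G_Z$ is generated by transvections $\tau_{a,b}$ with \emph{both} $a,b\in Z^{\pm1}$ together with total conjugations. But $\alpha_{[w]}^{-1}\gamma\alpha_{[w]}$ for $\gamma\in\langle S_i\rangle$ has no reason to live in $G_{\supp Q}$: the element $\alpha_{[w]}$ is built from Whitehead automorphisms with multipliers in $\supp w_0$, and conjugating a transvection $\tau_{a,b}$ (with $b\in\supp Q$) by such things produces automorphisms whose description on $b$ involves letters of $\supp w_0$ in a way not captured by $G_{\supp Q}$. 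The paper handles this with a separate rewriting argument (Lemma~\ref{le:kerrewrite}, then Lemmas~\ref{le:rewritegens} and~\ref{le:morerewrite}): one shows directly, by manipulating the word $\gamma(b)$ using one-term partial conjugations and commutator transvections, that an element of the form $\alpha^{-1}\beta_1\alpha\beta_2$ acting trivially on homology (with $\alpha$ built as in Lemma~\ref{le:awform} and $\beta_i$ acting only on $b$) lies in $\langle S_k\rangle$. That lemma is the actual engine; Proposition~\ref{pr:KZisker} alone does not see the interaction between the $\supp w_0$-multipliers in $\alpha_{[w]}$ and the $\supp Q$-targets in $\gamma$.

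A secondary issue: your claim that $(B_2,b)$ ``sits in the subgroup pointwise-fixing $\supp w_0$'' is not right---$(B_2,b)$ may conjugate generators in $\supp w_0$ by $b$, and such partial conjugations $c_{b,\{d\}}$ with $d\in\supp w_0$ are not among the generators of $S_k$.
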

We will prove some lemmas before proving this proposition.

\begin{lemma}\label{le:kerrewrite}
Let $b\in(\supp Q)^{\pm1}$.
Suppose $\alpha=(A_m,a_m)\cdots(A_1,a_1)$ is a product of Whitehead automorphisms such that $\pg{a_i}\neq b$ and $\trans(A_i,a_i)\subset \supp w_0$ for each $i$.
Further suppose we have $\beta_1,\beta_2\in\AAGo$ both of which which satisfy the following three conditions:
(1) $\beta_i$ fixes each $x\in X-\{b\}$; (2) $\beta_i(b)$ contains only a single instance of $b$ and no instance of $b^{-1}$; and (3) for each $y\in\supp\beta_i(b)$, we have $y\geq b$ or $y=b$.
Finally, suppose that $(\alpha^{-1}\beta_1\alpha\beta_2)_*\in \Aut H_\Gamma$ is the identity.

Then we have $\alpha^{-1}\beta_1\alpha\beta_2 \in\langle S_k\rangle$.
\end{lemma}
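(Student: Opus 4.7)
The plan is to show that $\gamma:=\alpha^{-1}\beta_1\alpha\beta_2$ is an automorphism fixing every generator except $b$, that $\gamma(b)=UbW$ for words $U,W$ in letters strictly dominating $b$, and that $\gamma$ can therefore be realized explicitly as a product of elements of $S_k$.

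First I would verify that $\gamma$ fixes each $x\in X-\{b\}$. Since $b\in\supp Q$ is disjoint from $\supp w_0$, the hypothesis $\trans(A_i,a_i)\subset\supp w_0$ forces $b,b^{-1}$ to be both in or both out of each $A_i$, so each factor sends $b$ to either $b$ or $a_i^{-1}ba_i$. For $x\neq b$, the word $\alpha(x)$ is assembled from copies of $x$ and multipliers $a_i^{\pm1}$, none of which equals $b$; thus $\beta_1$ acts trivially on $\alpha(x)$, $\alpha^{-1}$ returns $x$, and $\beta_2(x)=x$. Writing $\alpha(b)=c^{-1}bc$ for some word $c$ not involving $b$ (inductively: each Whitehead factor either fixes $b$ or conjugates it by $a_i$) and setting $d=\alpha^{-1}(c)$, a direct computation gives
\[
\gamma(b)\;=\;\bigl(u_2\,d^{-1}\alpha^{-1}(u_1)\,d\bigr)\,b\,\bigl(d^{-1}\alpha^{-1}(w_1)\,d\,w_2\bigr)\;=\;UbW,
\]
where $\beta_i(b)=u_ibw_i$.

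The main obstacle is to check that every letter of $U$ and $W$ strictly dominates $b$. The letters of $u_i,w_i$ do so by hypothesis; for the letters coming from $c$ and $d$, I would argue inductively that whenever a Whitehead factor $(A_i,a_i)$ introduces a new instance of $a_i$ into a word by acting nontrivially on a letter $y$, the validity of $(A_i,a_i)$ (Lemma~\ref{le:whdef} of Day~\cite{fpraag}) forces either $a_i\geq y$ or $a_i\in\lk(y)$, the latter making the action trivial on $y$. Since the letters being acted on already dominate $b$, transitivity gives $a_i\geq b$ for any letter actually inserted. The identical argument applied to $\alpha^{-1}$, whose factorization uses the same multipliers, shows that $d$, $\alpha^{-1}(u_1)$, and $\alpha^{-1}(w_1)$ are words in letters dominating $b$; hence $U$ and $W$ consist of letters different from $b$ whose vertices all dominate $b$. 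Since $\gamma_*$ is the identity in $\Aut H_\Gamma$, we also have $UW\in A'_\Gamma$. Setting $S':=\{x\in X:x\geq b,\,x\neq b\}$, the subgroup $H:=\langle S'\rangle\subset A_\Gamma$ is the right-angled Artin group on the induced full subgraph $\Gamma[S']$, and its abelianization embeds as a direct summand of $H_\Gamma$, so $H\cap A'_\Gamma=[H,H]$. Hence we may write $UW=[p_1,q_1]\cdots[p_n,q_n]$ with each $p_j,q_j\geq b$.

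Finally I would assemble $\gamma$ as a product of $S_k$-generators. For any $x\geq b$ with $x\neq b$ and $x\notin\lk(b)$, the singleton $\{b\}$ is a component of $\Gamma-\st(x)$ (a neighbor of $b$ would lie in $\lk(b)\subset\st(x)$, a contradiction), so $c_{x,\{b\}}$ is a valid partial conjugation in $S_k$; letters of $U$ lying in $\lk(b)$ commute with $b$ and may be discarded. A suitable product of $c_{x,\{b\}}^{\pm1}$, one for each remaining letter of $U$, gives an automorphism $\phi\in\langle S_k\rangle$ that fixes $X-\{b\}$ and sends $b\mapsto UbU^{-1}$, while the product $\psi=\tau_{[p_n,q_n],b}\cdots\tau_{[p_1,q_1],b}$ (each factor in $S_k$ because $b\in\supp Q$ and $p_j,q_j\geq b$) fixes $X-\{b\}$ and sends $b\mapsto b\cdot UW$. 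Then $(\phi\psi)(b)=\phi(b)\cdot\phi(UW)=UbU^{-1}\cdot UW=UbW=\gamma(b)$ and $\phi\psi$ fixes $X-\{b\}$, so $\gamma=\phi\psi\in\langle S_k\rangle$.
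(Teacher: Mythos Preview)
Your overall shape is the paper's: show $\gamma$ fixes $X-\{b\}$, pin down $\gamma(b)$, then build $\gamma$ from generators of $S_k$. But two steps, as written, do not go through.

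\textbf{First gap: the domination argument.} You claim that whenever a Whitehead factor $(A_i,a_i)$ acts nontrivially on a letter $y$, validity forces $a_i\geq y$ or $a_i\in\lk(y)$. That dichotomy is only true for the transvection part of $(A_i,a_i)$ (exactly one of $y,y^{-1}$ in $A_i$). When both $y,y^{-1}\in A_i$ the action is a partial conjugation, and validity only says $y$ lies in a conjugated component of $\Gamma-\st(a_i)$; there is no reason $a_i\geq y$. So your transitivity chain $a_i\geq y\geq b$ breaks, and you have not shown that the letters of $d=\alpha^{-1}(c)$ or of $\alpha^{-1}(u_1),\alpha^{-1}(w_1)$ dominate $b$. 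The paper repairs this by first multiplying $\alpha$ by an inner automorphism (harmless, since inner automorphisms lie in $\langle S_k\rangle$) so that every factor \emph{fixes} $b$. Then if $(A_i,a_i)$ conjugates some $y\geq b$ nontrivially, $y$ lies in a conjugated component $C$ of $\Gamma-\st(a_i)$ while $b$ does not; using $\lk(b)\subset\st(y)$ one checks that every $z\in\lk(b)$ is forced into $\st(a_i)$, so $a_i\geq b$. You need this normalization and component argument, not the false dichotomy.

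\textbf{Second gap: the commutator step.} You write $UW=[p_1,q_1]\cdots[p_n,q_n]$ with each $p_j,q_j\geq b$ and then invoke $\tau_{[p_j,q_j],b}$. But $\tau_{[p,q],b}$ is only defined for \emph{letters} $p,q\in L$, and an arbitrary element of the commutator subgroup of $H=\langle S'\rangle$ is not a product of commutators of generators---it is only a product of \emph{conjugates} of such commutators. Your $\psi$ therefore need not exist. This is fixable: since $c_{z,\{b\}}\,\tau_{[x,y],b}\,c_{z,\{b\}}^{-1}$ sends $b\mapsto b\cdot z[x,y]z^{-1}$, iterating gives $b\mapsto b\cdot v[x,y]v^{-1}$ for any word $v$ in $S'$, and products of these realize $b\mapsto b\cdot w$ for any $w\in[H,H]$. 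The paper sidesteps this entirely with a more elementary ``bubble sort'' on a cyclic representative of $\gamma(b)$: use $c_{x,\{b\}}$ to slide $b$ to any position and $\tau_{[y,x],b}$ to swap the two letters immediately right of $b$, then bring each $x$--$x^{-1}$ pair together and cancel. That algorithm avoids any appeal to commutator identities.
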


\begin{proof}
Let $\gamma=\alpha^{-1}\beta_1\alpha\beta_2$.
If $x\in X-\{b\}$, then since each $\pg{a_i}\neq b$, we know that $b\notin\supp\alpha([x])$ and therefore that $\gamma$ fixes $x$.
Since $b\notin\trans(A_i,a_i)$ for any $i$, 
if we alter $\alpha$ by an inner automorphism, we may assume that each $(A_i,a_i)$ fixes $b$.
Since we aim to show $\gamma$ is in $\langle S_k\rangle$, which contains the inner automorphisms, we can do this.
If some $(A_i,a_i)$ conjugates some $x\geq b$ while fixing $b$, we can deduce that $a_i\geq b$, and it follows that each element of $\supp \gamma(b)$ is either equal to $b$ or dominates $b$.
Since each $a_i\neq b$, we know that only a single instance of $b$ appears in $\gamma(b)$.
Also, we know that $\gamma$ fixes the image of $b$ in $H_\Gamma$, so each element of $X-\{b\}$ that appears in $\gamma(b)$ appears in pairs of opposite exponent.

We claim that we can reduce $\gamma$ to the identity by a series of applications of elements of $S_k$.
Let the cyclic word $v_0$ be a graphically reduced representative of $\gamma(b)$; by the previous reasoning, we know that $v_0$ contains a single instance of $b$.

Suppose the $b$ in $v_0$ is in a subsegment $xby$ for $x,y\in L$.
Note that $v_0$ with $bxy$ substituted for $xby$ represents $c_{x,\{b\}}([v_0])$ and that $v_0$ with $xyb$ substituted for $xby$ represents $c_{y,\{b\}}^{-1}([v_0])$.
In this manner, by applying some partial conjugations from $S_k$, we can send $[v_0]$ to a conjugacy class represented by $v_0$ with $b$ moved to any position in the cyclic word.
If the $b$ in $v_0$ is in a subsegment $bxy$ with $\pg{x}\neq\pg{y}$, then $v_0$ with $byx$ substituted for $bxy$ is a representative for $\tau_{[y,x],b}([v_0])$.
So by applying some elements from $S_k$, we can send $[v_0]$ to a conjugacy class represented by $v_0$ with the two letters to the right of $b$ swapped.

So, to shorten $v_0$, identify an instance of some $x$ and an instance of $x^{-1}$ in $v_0$, apply elements of $S_k$ to move $b$ to the left of $x$, apply an element to move $x$ to the right (by a swap), move $b$ to the right, and repeat, until $x$ is next to $x^{-1}$ and they cancel.
Note that all of these moves are allowed since the elements appearing in $v_0$ other than $b$ dominate $b$, and also note that these moves fix every element other than $b$.
By this procedure we can shorten $v_0$ until only $b$ remains, and we have produced an automorphism $\delta\in\langle S_k\rangle$ with $\delta=\gamma^{-1}$.
\end{proof}

\begin{lemma}\label{le:rewritegens}
Let $[w]\in T'$ and let $b\in\supp Q$.
For any $a\in X$ with $a\geq b$, we have an automorphism $\gamma$ that is a product of transvections acting only on $b$, such that:
\[\alpha_{[w]}^{-1}\tau_{a,b}\alpha_{[w]}\gamma\in\langle S_k \rangle\]

For any element $\alpha=\alpha_{(C,c)([w])}^{-1}(C,c)\alpha_{[w]}\in S_e$, with $\pg{c}\neq b$, 
 there is a product $\gamma$ of transvections acting only on $b$ such that:
\[\alpha^{-1}\tau_{a,b}\alpha\gamma^{-1}\in \langle S_k\rangle\]

If $\beta\in \langle S_k\rangle$ fixes every element of $X-\{b\}$, then:
\[\alpha_{[w]}^{-1}\beta\alpha_{[w]} \in \langle S_k\rangle\]
\end{lemma}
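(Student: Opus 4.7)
The plan is to obtain all three statements from Lemma~\ref{le:kerrewrite}. Recall that lemma requires three things: (i) a Whitehead decomposition of the conjugating automorphism in which each factor has multiplier not equal to $b$ and transvection set inside $\supp w_0$; (ii) data $\beta_1,\beta_2$ that fix $X-\{b\}$ and send $b$ to a word with a single $b$, no $b^{-1}$, and remaining letters dominating $b$; and (iii) triviality of the induced action on $H_\Gamma$. By Lemma~\ref{le:awform}, $\alpha_{[w]}$ is a product of Whitehead automorphisms $(A_i,a_i)$ with $a_i\in\supp w_0$ and $\trans(A_i,a_i)\subset\supp w_0$; since $b\in\supp Q$ and $\supp Q\cap\supp w_0=\emptyset$, we have $\pg{a_i}\neq b$ and $b\notin\trans(A_i,a_i)$ for every $i$, so condition~(i) is automatic for the conjugating automorphism in statement~(1).

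For statement~(1), I would take $\beta_1=\tau_{a,b}$, which satisfies~(ii) because $a\geq b$. Since $b\notin\trans(A_i,a_i)$ and $\pg{a_i}\neq b$, neither $\alpha_{[w]*}$ nor its inverse introduces a $b$-component in any basis vector, so $\alpha_{[w]}^{-1}\tau_{a,b}\alpha_{[w]}$ acts on $H_\Gamma$ as the identity away from the $b$-column and sends $b$ to $b+\alpha_{[w]*}^{-1}(a)$. By Lemma~\ref{le:knowdom} applied to $\alpha_{[w]}^{-1}\in\AAGo$, every basis vector $v$ appearing in $\alpha_{[w]*}^{-1}(a)$ satisfies $v=a$ or $v\geq a$; transitivity of domination then gives $v\geq b$, and $v\neq b$ since $b\notin\supp w_0$. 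Writing $\alpha_{[w]*}^{-1}(a)=\sum_v n_v v$, the element $\gamma=\prod_v\tau_{v,b}^{-n_v}$ is a product of transvections acting only on $b$ whose image of $b$ is $b$ followed by letters $v^{\pm 1}$ with each $v\geq b$, so $\gamma$ itself satisfies~(ii); and it arranges that~(iii) holds. Statement~(2) is the same argument with $\alpha=\alpha_{(C,c)([w])}^{-1}(C,c)\alpha_{[w]}$ in place of $\alpha_{[w]}$: the assumption $\pg{c}\neq b$ together with $\trans(C,c)\subset\supp w_0$ (built into the definition of $S_e$) keeps condition~(i) intact, and the Lemma~\ref{le:knowdom} computation produces an analogous corrective element (which one then calls $\gamma^{-1}$ to match the statement, noting that the set of transvection products acting only on $b$ is closed under inverses).

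For statement~(3) take $\beta_1=\beta$ and $\beta_2=1$; condition~(iii) is immediate because $\beta\in\langle S_k\rangle\subset\Ker(\AAGo\to\Aut H_\Gamma)$ by Proposition~\ref{pr:KZisker}. The real work is verifying condition~(ii) for $\beta$: one must show that any product of $S_k$-generators that fixes $X-\{b\}$ sends $b$ to a word with exactly one $b$, no $b^{-1}$, and all remaining letters dominating $b$. The generators $\tau_{[x,y],b}$ and $c_{x,\{b\}}$ manifestly have this form from their domination constraints, while any total conjugation $c_x$ appearing in the product can be expanded as a commuting product of one-term partial conjugations $c_{x,\{y\}}$ over $y$ in components of $\Gamma-\st(x)$; the condition that the overall product fixes each $y\in X-\{b\}$ then forces these non-$b$ twists to cancel against one another, leaving only twists of the $b$-column by letters dominating $b$. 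Establishing this refactorization cleanly is the main obstacle of the proof, but once it is in hand Lemma~\ref{le:kerrewrite} applies and delivers $\alpha_{[w]}^{-1}\beta\alpha_{[w]}\in\langle S_k\rangle$.
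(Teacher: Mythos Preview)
Your treatment of the first two statements is correct and matches the paper exactly: use Lemma~\ref{le:awform} to check the hypotheses on the conjugating automorphism in Lemma~\ref{le:kerrewrite}, use Lemma~\ref{le:knowdom} to build the corrective product $\gamma$ of transvections acting on $b$, and then invoke Lemma~\ref{le:kerrewrite}.

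For the third statement your diagnosis is right---conditions (2) and (3) of Lemma~\ref{le:kerrewrite} are not obvious for an arbitrary $\beta\in\langle S_k\rangle$ fixing $X-\{b\}$---but your proposed fix does not work. You cannot expand a total conjugation $c_x$ as a product of one-term partial conjugations $c_{x,\{y\}}$ ranging over all $y$, because $c_{x,\{y\}}$ is only defined when $\{y\}$ is an entire connected component of $\Gamma-\st(x)$, i.e.\ when $x\geq y$. For other $y$ there is no such automorphism, so the ``cancellation of non-$b$ twists'' you describe has nothing to stand on.

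The clean route is simpler and in fact makes the hypothesis that $\beta$ fixes $X-\{b\}$ unnecessary: write $\beta$ as a word in the $S_k$-generators and conjugate term by term. If a factor is a total conjugation $c_x$, then $\alpha_{[w]}^{-1}c_x\alpha_{[w]}$ is inner, hence in $\langle S_k\rangle$. If a factor is $\tau_{[x,y],c}$ or $c_{x,\{c\}}$ with $c\in\supp Q$, then it fixes $X-\{c\}$ and visibly satisfies conditions (2) and (3) of Lemma~\ref{le:kerrewrite} with $c$ in place of $b$; that lemma together with Lemma~\ref{le:awform} then gives $\alpha_{[w]}^{-1}(\text{factor})\alpha_{[w]}\in\langle S_k\rangle$. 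Multiplying, $\alpha_{[w]}^{-1}\beta\alpha_{[w]}\in\langle S_k\rangle$. The paper's terse ``immediate from Lemma~\ref{le:awform} and Lemma~\ref{le:kerrewrite}'' is best read as shorthand for this generator-by-generator argument; taken as a single application of Lemma~\ref{le:kerrewrite} to the composite $\beta$, it would share the gap you identified.
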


\begin{proof}
For the first statement, note that 
\[(\alpha_{[w]})^{-1}_*[a]=\sum_{i=1}^m p_i[c_i]\in H_\Gamma\]
for some $c_i\in X$ and nonzero integers $p_i$.
Then for each $i$ we have $c_i\geq a$ by Lemma~\ref{le:knowdom}, so $c_i\geq b$ and
we can take $\gamma=\tau_{c_1,b}^{p_1}\cdots\tau_{c_m,b}^{p_1}$.

Similarly, it follows from Lemma~\ref{le:knowdom} that the element $\gamma$ needed for the second statement also exists.

Then the lemma is immediate from Lemma~\ref{le:awform} and Lemma~\ref{le:kerrewrite}.
\end{proof}

\begin{lemma}\label{le:morerewrite}
Let $[w]\in T'$ and $\gamma\in\langle S_i\rangle$.
Then there is an element $\gamma'\in\langle S_i\rangle$ such that 
$\alpha_{[w]}^{-1}\gamma\alpha_{[w]}\gamma'$
is in $\langle S_k\rangle$.
\end{lemma}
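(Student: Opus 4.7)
The plan is to induct on the length of $\gamma$ as a word in $S_i$, with the base case $\gamma = 1$ handled by $\gamma' = 1$. For the inductive step, write $\gamma = \gamma_0\delta$ with $\gamma_0 \in S_i$ and $\delta$ shorter, and apply the inductive hypothesis to obtain $\delta' \in \langle S_i\rangle$ with $\alpha_{[w]}^{-1}\delta\alpha_{[w]}\delta' \in \langle S_k\rangle$. The task then reduces to absorbing the single generator $\gamma_0$ in a compatible way and splicing the two absorbed pieces together.

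Handling $\gamma_0$ splits into two cases. If $\gamma_0 = \tau_{a,b}$ is a transvection in $S_i$, Lemma~\ref{le:rewritegens} directly produces $\eta \in \langle S_i\rangle$ (a product of $\tau_{c_i,b}$'s with each $c_i \geq b$) such that $\alpha_{[w]}^{-1}\gamma_0\alpha_{[w]}\eta \in \langle S_k\rangle$. If $\gamma_0 = N_b$ is an inversion with $b \in \supp Q$, the key observation is that $\alpha_{[w]}$ commutes with $N_b$: by Lemma~\ref{le:awform}, $\alpha_{[w]}$ factors as a product of Whitehead automorphisms $(A_i,a_i)$ with $a_i \in \supp w_0$ and $\trans(A_i,a_i) \subset \supp w_0$, and since $\supp w_0 \cap \supp Q = \emptyset$, each $(A_i,a_i)$ acts on $b$ either trivially or by conjugation by $a_i$, and acts on any other generator $y$ via a word in $\{y,a_i\}$ that contains no instance of $b$; all such operations commute with $N_b$. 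Thus $\alpha_{[w]}^{-1}N_b\alpha_{[w]} = N_b$, and taking $\eta := N_b$ gives $\alpha_{[w]}^{-1}\gamma_0\alpha_{[w]}\eta = 1 \in \langle S_k\rangle$. In both cases we have $\alpha_{[w]}^{-1}\gamma_0\alpha_{[w]} = k_0\eta^{-1}$ with $k_0 \in \langle S_k\rangle$ and $\eta \in \langle S_i\rangle$, and combining with the inductive hypothesis yields
\[
\alpha_{[w]}^{-1}\gamma\alpha_{[w]}\cdot(\delta'\eta) = k_0(\eta^{-1}k_1\eta), \qquad k_1 \in \langle S_k\rangle.
\]
Setting $\gamma' := \delta'\eta \in \langle S_i\rangle$ completes the induction provided $\langle S_k\rangle$ is normalized by $\langle S_i\rangle$.

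The main obstacle is establishing this last normality. It reduces to checking that conjugating each generator of $S_k$ by each generator of $S_i$ produces an element of $\langle S_k\rangle$. For a conjugating transvection $\tau_{a,b} \in S_i$ (with $a\geq b\in\supp Q$), the explicit identities in Sublemmas~\ref{sl:conjidsconj} and~\ref{sl:conjidstrans} apply: each resulting term has target in $\supp Q$, and any new multiplier dominates its target by transitivity of $\geq$ combined with the defining conditions $a \geq b$ and $x \geq c$ on the $S_i$-- and $S_k$--generators. For a conjugating inversion $N_b$, a direct computation shows that $N_b$ either commutes with the $S_k$--generator or replaces it by its inverse, which is again in $\langle S_k\rangle$; for the $\tau_{[x,y],c}$ case, one invokes the identities sketched in the remark following the definition of $K_Z$, which express $\tau_{[x^{-1},y^{-1}],c^{-1}}$ as a product of the $\tau_{[x',y'],c}$'s and conjugations.
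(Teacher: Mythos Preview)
Your proof is correct and follows essentially the same approach as the paper's. Both induct on the $S_i$--length of $\gamma$, handle the inversion case by observing that $\alpha_{[w]}$ commutes with $N_b$ for $b\in\supp Q$ (via Lemma~\ref{le:awform}), handle the transvection case via Lemma~\ref{le:rewritegens}, and splice the pieces together using the fact that $\langle S_i\rangle$ normalizes $\langle S_k\rangle$. The only cosmetic difference is that you peel off a generator from the left while the paper peels from the right; this changes which element must normalize $\langle S_k\rangle$ (your $\eta$ versus the paper's inductively built $\gamma'_0$), but the required normality statement is the same.

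One small remark on your normality justification: invoking Sublemmas~\ref{sl:conjidsconj} and~\ref{sl:conjidstrans} directly is actually cleaner than the paper's citation of Lemma~\ref{le:KZnormal} with $Z=\supp Q$, since neither $\langle S_k\rangle$ nor $\langle S_i\rangle$ literally equals $K_Z$ or $G_Z$ for that $Z$. Your observation that ``each resulting term has target in $\supp Q$'' is exactly the point. However, your appeal to the remark after the definition of $K_Z$ for the case $N_b\,\tau_{[x,y],c}\,N_b^{-1}$ is slightly loose: that remark does not guarantee the target of each factor stays equal to $c$. The cleanest way to close this is to note that the conjugate fixes $X\setminus\{c\}$, sends $c$ to $c$ times a homologically trivial word in letters dominating $c$, and hence lies in $\langle S_k\rangle$ by the reduction procedure in the proof of Lemma~\ref{le:kerrewrite}.
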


\begin{proof}
We proceed by induction on the $S_i$--length of $\gamma$.
Suppose $\gamma=\gamma_0\beta$ where $\beta\in S_i$ and we have some $\gamma'_0\in\langle S_i\rangle$ such that $\delta= \alpha_{[w]}^{-1}\gamma_0\alpha_{[w]}\gamma'_0\in \langle S_k\rangle$.
If $\beta$ is the inversion with respect to any element of $\supp Q$, then a computation shows that $\alpha_{[w]}$ commutes with $\beta$, and therefore
\[\alpha_{[w]}^{-1}\gamma\alpha_{[w]}\beta^{-1}\gamma'_0=\delta\in\langle S_k\rangle\]
and we can take $\gamma'=\beta^{-1}\gamma'_0$.

If $\beta$ is a transvection $\tau_{a,b}$ with $b\in \supp Q$, then by Lemma~\ref{le:rewritegens} we have a $\gamma''\in\langle S_i\rangle$ with $\delta'=\alpha_{[w]}^{-1}\beta\alpha_{[w]}\gamma''\in\langle S_k\rangle$.
Set $\delta''=(\gamma'_0)^{-1}\delta'\gamma'_0$.
From Lemma~\ref{le:KZnormal} (with $Z=\supp Q$), we have $\delta''\in\langle S_k\rangle$.
We set $\gamma'=\gamma''\gamma'_0$, and we have
\begin{equation*}\begin{split}\alpha_{[w]}^{-1}\gamma_0\beta\alpha_{[w]}\gamma''\gamma'_0&=\alpha_{[w]}^{-1}\gamma_0\alpha_{[w]}\delta'\gamma'_0\\
&=\delta(\gamma'_0)^{-1}\delta'\gamma'_0=\delta\delta''
\end{split}\end{equation*}
which is in $\langle S_k\rangle$.
\end{proof}

\begin{proof}[Proof of Proposition~\ref{pr:Sektgens}]
First note that $S_e$ is finite because $\Delta$ is finite, and $S_i$ and $S_k$ are finite because $X$ is finite.
Now suppose we have an edge $\alpha$ between two vertices $[w_1]$ and $[w_2]$ of $\Delta$.
We know that $[w_i]=\sigma_i([v_i])$ where $\sigma_i$ is a possibly trivial permutation automorphism and $[v_i]\in T'$, for $i=1,2$.
Define the set $S\subset \AAG$ to be the set of elements of the form $\alpha_{[v_2]}^{-1}\sigma_2^{-1}\alpha\sigma_1\alpha_{[v_1]}$, indexed over all edges $\alpha$ of $\Delta$.
Since $T'$ is a maximal tree for $\Delta$, the elements of $S$ describe a generating set for $\pi_1(\Delta,[w_0])$, and as explained in Corollary~\ref{co:w0gpfg}, they therefore generate $(\AAGo)_{[w_0]}$.

Since it is obvious that $S_e\cup S_k\cup S_i\subset (\AAGo)_{[w_0]}$, we prove the lemma by showing that $S\subset\langle S_e\cup S_k\cup S_i\rangle$.
Consider an arbitrary element of $S$:
\[\beta=\alpha_{[v_2]}^{-1}\sigma_2^{-1}\alpha\sigma_1\alpha_{[v_1]}\]
If $\alpha$ is a permutation automorphism, then we write $\sigma_2^{-1}\alpha\sigma_1$ as a single permutation $\sigma_3$. 
Since $[v_1]$ and $[v_2]$ are both in $T'$, we know that $\supp [v_1]=\supp [v_2]=\supp w_0$, and therefore $\sigma_3$ leaves $\supp w_0$ invariant. 
It follows from this and the fact that $\sigma_3$ is in $\AAGo$ that $\sigma_3$ factors as a product of a permutation automorphism $\sigma_{w_0}\in \AAGo$ that fixes $(\supp Q)^{\pm1}$ pointwise and a permutation automorphism $\sigma_Q\in\AAGo$ that fixes $(\supp w_0)^{\pm1}$ pointwise.
Since $\sigma_Q$ is in $\AAGo$ and fixes $(\supp w_0)^{\pm1}$, it follows that $\sigma_Q\in\langle S_i\rangle$.
We know $\supp [v_1]=\supp w_0$, so $\sigma_Q$ fixes $[v_1]$ and therefore $\sigma_{w_0}([v_1])=[v_2]$ and $\alpha_{[v_2]}^{-1}\sigma_{w_0}\alpha_{[v_1]}\in S_e$.
Then $\beta$ will be in $\langle S_e\cup S_k\rangle$ if $\alpha_{[v_1]}^{-1}\sigma_Q\alpha_{[v_1]}$ is.
Since $\sigma_Q\in \langle S_i\rangle$, Lemma~\ref{le:morerewrite} says that there is a $\gamma\in\langle S_i\rangle$ with $\alpha_{[v_1]}^{-1}\sigma_Q\alpha_{[v_1]}\gamma\in\langle S_k\rangle$.
The proposition follows in this case.

So assume that $\alpha=(A,a)$.
By replacing $\alpha$ with $\sigma_2^{-1}\alpha\sigma_2$ and $\sigma_1$ with $\sigma_2^{-1}\sigma_1$, we may assume that $\sigma_2=1$.
If we set $[w_3]=\alpha^{-1}\alpha_{[v_2]}([w_0])$, then it follows from the construction of $T'$ that $[w_3]\in T'$.
Then we know that $\alpha_{[w_3]}^{-1}\sigma_1\alpha_{[v_1]}$ is in $S_e$, and therefore $\beta$ is in $\langle S_e\cup S_k\cup S_i\rangle$ only if the element
\[\beta'=\alpha_{[v_2]}^{-1}\alpha\alpha_{[w_3]}\]
is as well.
We may rewrite $\alpha=(A',a)\gamma$, where $\gamma\in\langle S_i\rangle$ and $\trans(A',a)\subset \supp w_0$. 
Since $[w_3]\in T'$, we know that $\supp [w_3]=\supp [w_0]$, and $\gamma$ fixes $[w_3]$.
We may rewrite $\beta'$ as the product of the element
$\alpha_{[v_2]}^{-1}(A',a)\alpha_{[w_3]}$, which is in $S_e$, with the element
$\alpha_{[w_3]}^{-1}\gamma\alpha_{[w_3]}$.
By Lemma~\ref{le:morerewrite}, there is an element $\gamma'\in\langle S_i\rangle$ such that $\alpha_{[w_3]}^{-1}\gamma\alpha_{[w_3]}\gamma'$ is in $\langle S_k\rangle$.
The proposition follows.
\end{proof}

We proceed by showing that we can do better:
\begin{proposition}\label{pr:1ofeach}
Any element of $(\AAGo)_{[w_0]}$ can be written as the product of a single element of $\langle S_e\cup S_k\rangle$ and a single element of $\langle S_i\rangle$.
\end{proposition}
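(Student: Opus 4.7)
The plan is to show that the subset $\langle S_e\cup S_k\rangle\cdot\langle S_i\rangle\subset(\AAGo)_{[w_0]}$ is closed under multiplication; since it contains the generating set $S_e\cup S_k\cup S_i$ supplied by Proposition~\ref{pr:Sektgens}, it will then coincide with all of $(\AAGo)_{[w_0]}$, yielding the desired normal form. Closure reduces, by two easy inductions on word length, to the following ``push past'' claim: for any single generator $s\in S_i$ and any single generator $t\in S_e\cup S_k$, the product $s\cdot t$ lies in $\langle S_e\cup S_k\rangle\cdot\langle S_i\rangle$.

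For $t\in S_k$ I will prove the stronger statement that $sts^{-1}\in\langle S_k\rangle$. Sublemmas~\ref{sl:conjidsconj} and~\ref{sl:conjidstrans} already express such a conjugate as a product of partial conjugations $c_{\cdot,\{d\}}$ and commutator-transvections $\tau_{[\cdot,\cdot],d}$. Since $s$ is either $\tau_{a,b}$ with $b\in\supp Q$ or an inversion $N_b$ with $b\in\supp Q$, and since the ``innermost'' letter of $t$ itself already lies in $\supp Q$, a case-by-case reading of the identities in those proofs shows that the subscript $d$ of every new partial conjugation or commutator-transvection still belongs to $\supp Q$, placing the conjugate in $\langle S_k\rangle$. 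The inversion cases are checked by direct evaluation on generators.

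For $t\in S_e$, write $t=\alpha_{\beta([v])}^{-1}\beta\alpha_{[v]}$ and push $s$ past the three factors in turn. Pushing $s$ past $\alpha_{[v]}$ on the right is handled directly by Lemma~\ref{le:morerewrite}. Pushing $s$ past $\alpha_{\beta([v])}^{-1}$ on the left requires a dual of that lemma: for each $[w]\in T'$ and each $\gamma\in\langle S_i\rangle$, there is $\gamma'\in\langle S_i\rangle$ with $\alpha_{[w]}\gamma\alpha_{[w]}^{-1}\gamma'\in\langle S_k\rangle$. I would prove this by repeating the inductive arguments of Lemmas~\ref{le:kerrewrite}, \ref{le:rewritegens}, and~\ref{le:morerewrite} with the roles of $\alpha_{[w]}$ and $\alpha_{[w]}^{-1}$ exchanged; since both lie in $\AAGo$, Lemma~\ref{le:knowdom} supplies the same domination control in either direction.

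The main obstacle is pushing $s$ past the middle factor $\beta$. When $\beta$ is a permutation automorphism fixing $(\supp Q)^{\pm1}$ pointwise, it normalizes $\langle S_i\rangle$, because graph automorphisms preserve the domination relation, so $s\beta=\beta s'$ for some $s'\in\langle S_i\rangle$. When $\beta=(B,b)\in\Omega$ has $\trans\beta\subset\supp w_0$, the multiplier $b$ lies in $\supp w_0$ and is therefore disjoint from $\supp Q$; conjugating each generator $\tau_{a,c}\in S_i$ by $\beta$ and invoking the Whitehead conjugation identities from Day~\cite{fpraag} should express the result as an element of $\langle S_i\rangle$ composed with an element of $\langle S_k\rangle$. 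Because $b$ and $c$ lie in disjoint parts of the graph, many potentially complicated sub-cases collapse, and what remains is a tractable direct inspection. Assembling the three pushes, with the $S_k$-case above absorbing each intermediate $\langle S_k\rangle$-residue, completes the argument.
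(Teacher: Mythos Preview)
Your overall plan---push elements of $\langle S_i\rangle$ past generators of $S_e\cup S_k$ to obtain the product decomposition---is the same strategy the paper uses (compare Lemma~\ref{le:across1} and the induction in the proof of Proposition~\ref{pr:1ofeach}). The $t\in S_k$ case is essentially correct: the identities in Sublemmas~\ref{sl:conjidsconj} and~\ref{sl:conjidstrans} only introduce new subscripts coming from $\{b,c\}\subset\supp Q$, as you say.

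The genuine gap is in your treatment of $t\in S_e$ with $\beta=(B,b)$. You assert that ``the multiplier $b$ lies in $\supp w_0$ and is therefore disjoint from $\supp Q$.'' This is false. The definition of $S_e$ only requires $\trans\beta\subset\supp w_0$; the multiplier $b$ itself may well lie in $(\supp Q)^{\pm1}$. (Lemma~\ref{le:whedgeform} forces multipliers of \emph{tree} edges into $\supp w_0$, but edge generators come from arbitrary edges of $\Delta$, not just tree edges.) When $\pg{b}\in\supp Q$, the Whitehead conjugation identities you invoke can interact nontrivially with your $s=\tau_{a,c}$, since $c$ and $b$ may coincide; one does \emph{not} get a clean $\langle S_i\rangle\cdot\langle S_k\rangle$ decomposition by naive commutation.

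This is exactly the case that the paper's Lemmas~\ref{le:nobadcase}, \ref{le:newmult}, and~\ref{le:cinQrewrite} are designed to handle. The key input (Lemma~\ref{le:nobadcase}) is not formal: it uses the length-minimality of $[w_0]$ from Corollary~\ref{co:lenmin} via the substitution trick of Sublemma~\ref{sl:substitute} to rule out the bad sub-case $a\sim b$ with $a\in\supp w_0\cap\trans\beta$. Only after this is established can one rewrite $\tau_{a,b}(B,b)\tau_{a,b}^{-1}$ using relation~(R4) from \cite{fpraag} as a product of $(B,b)$ with Whitehead automorphisms of the form $(B-b+a,a)$, which Lemma~\ref{le:newmult} shows also fix $[w]$ and hence yield edge generators. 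Without this analysis your push-past argument breaks down precisely at the most delicate point.
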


\begin{sublemma}\label{sl:substitute}
Suppose $(B,b)$ is a Whitehead automorphism, $a\in L$ with $a\geq b$ and $a,a^{-1}\notin B$.
Suppose $[u]$ is an element or conjugacy class in $A_\Gamma$ with $\pg{b}\notin \supp [u]$ and $v$ is a graphically reduced word or cyclic word representing $(B,b)([u])$.
Then if $v'$ is $v$ with all instances of $b$ replaced by $a$ and all instances of $b^{-1}$ replaced by $a^{-1}$, then $v'$ is a representative of $(B-b+a,a)([u])$.
\end{sublemma}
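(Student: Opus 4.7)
The plan is to realize the word-level substitution $b\mapsto a$, $b^{-1}\mapsto a^{-1}$ as an endomorphism $\psi\co A_\Gamma\to A_\Gamma$, and then to verify that $\psi\circ (B,b)$ and $(B-b+a,a)$ agree on every element whose support avoids $\pg{b}$.

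First I would define $\psi$ to be the homomorphism extending the set map that sends $\pg{b}$ to whichever of $\pg{a}$ or $\pg{a}^{-1}$ makes $\psi(b)=a$, while fixing every other vertex of $X$. To see that this map descends from the free group on $X$ to $A_\Gamma$, one only needs to check the defining relations $[\pg{b},y]=1$ for $y\in\lk(\pg{b})$. The domination hypothesis $a\geq b$ gives $\lk(\pg{b})\subset\st(\pg{a})$, so such a $y$ either equals $\pg{a}$ or is adjacent to $\pg{a}$, and in either case $\psi(\pg{b})\in\{\pg{a},\pg{a}^{-1}\}$ commutes with $y=\psi(y)$ in $A_\Gamma$. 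This step is where the domination condition enters essentially, and it is the only part of the argument that requires real care.

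Next I would verify the letter-level identity $\psi((B,b)(x))=(B-b+a,a)(x)$ for every $x\in L$ with $\pg{x}\neq\pg{b}$. The hypothesis $a,a^{-1}\notin B$ guarantees that $(B,b)$ fixes $a^{\pm 1}$, and for every letter $x$ with $\pg{x}\neq\pg{b}$ the statements $x\in B$ and $x\in B-b+a$ have the same truth value (and likewise for $x^{-1}$). A short case analysis on whether $x$ or $x^{-1}$ lies in $B$ then shows that the substitution $b\mapsto a$ carries $(B,b)(x)\in\{x,xb,b^{-1}x,b^{-1}xb\}$ to the matching element of $\{x,xa,a^{-1}x,a^{-1}xa\}$, which is exactly $(B-b+a,a)(x)$.

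Because $\pg{b}\notin\supp[u]$, any (cyclic) representative of $u$ is a product of letters from $L-\{b^{\pm 1}\}$, so the letter-level identity, combined with the fact that $\psi$ and $(B,b)$ are homomorphisms, yields $\psi((B,b)(u))=(B-b+a,a)(u)$ in $A_\Gamma$. The cyclic version follows from the same argument because $\psi$ sends conjugate elements to conjugate elements. By construction $v'$ is nothing but $\psi$ applied to $v$ letter-by-letter, hence represents $\psi((B,b)(u))=(B-b+a,a)(u)$, as required.
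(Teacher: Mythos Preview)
Your argument is correct and is actually cleaner than the paper's own proof. The paper works entirely at the word level: it takes the letter-by-letter image $\tilde v$ of $u$ under $(B,b)$, observes that the substitution applied to $\tilde v$ manifestly represents $(B-b+a,a)([u])$, and then argues that every commuting swap or cancellation used to reduce $\tilde v$ to the given graphically reduced $v$ can be mirrored on the substituted word, because $a$ commutes with any letter that $b$ commutes with (this is where $a\geq b$ is invoked).

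You bypass this tracking of reductions by packaging the substitution as a genuine endomorphism $\psi$ of $A_\Gamma$. The well-definedness check for $\psi$ is exactly where you use $a\geq b$, and once $\psi$ is a homomorphism the statement ``$v'$ represents $(B-b+a,a)([u])$'' follows immediately from the letter-level identity $\psi\circ(B,b)=(B-b+a,a)$ on $L\setminus\{b^{\pm1}\}$, with no need to chase a particular reduction sequence from $\tilde v$ to $v$. Your approach makes clearer why the result is independent of the chosen graphically reduced representative $v$, and the conjugacy-class version comes for free. The paper's approach, by contrast, stays closer to the combinatorics of words and makes the parallel-reduction picture explicit; the two arguments are equivalent in content but yours is more conceptual.
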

\begin{proof}
Note that since $a\geq b$, $(B-b+a,a)$ is well defined by Lemma~\ref{le:whdef} of Day~\cite{fpraag}.
Pick a graphically reduced representative $u$ for $[u]$; obtain a representative $\tilde v$ for $(B,b)([u])$ by applying $(B,b)$ letter-by-letter to $u$.
It is immediate that if $\tilde v'$ is $\tilde v$ with these substitutions, then $\tilde v$ represents $(B-b+a,a)$.
Note that $\pg{b}\notin\supp\tilde v$. 
Since $a$ commutes with every letter that $b$ commutes with (except possibly $b^{\pm1}$), each time we modify $\tilde v$ by swapping two adjacent, commuting letters, or by making a graphic reduction, we can make a parallel modification to $\tilde v'$ and still have representatives that differ by the described substitution and represent the same two elements.
Since we can get from any representative of $(B,b)([u])$ to the representative $v$ by such moves, we have proven the statement.
\end{proof}

\begin{lemma}\label{le:nobadcase}
Suppose $\beta=(B,b)\in\Omega$ such that $\pg{b}\notin\supp w_0$ and for some $[w]\in T'$, $\alpha_{[w]}^{-1}\beta\alpha_{[w]}\in S_e$.
If $a\in(\supp w_0)\cap(\trans\beta)$, then $a\not\sim b$.
\end{lemma}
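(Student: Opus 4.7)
The plan is to prove the contrapositive: I assume $a\sim b$ together with the other hypotheses and derive $\beta([w])\neq[w]$, which will contradict a consequence of $\alpha_{[w]}^{-1}\beta\alpha_{[w]}\in S_e$. First I extract $\beta([w])=[w]$: since $\alpha_{[w]}^{-1}\beta\alpha_{[w]}\in S_e\subset(\AAGo)_{[w_0]}$ fixes $[w_0]$, applying this element to $[w_0]$ and using $\alpha_{[w]}([w_0])=[w]$ yields $\beta([w])=[w]$.

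Next I pick a graphically reduced cyclic representative $w$ of $[w]$. By Lemma~\ref{le:awform}, $\supp w=\supp w_0$, so $\pg{b}\notin\supp w$ and $a\in\supp w$. Applying $\beta$ letter-by-letter to $w$ yields a word $\tilde w$ obtained from $w$ by inserting $b^{\pm 1}$'s around each letter whose vertex lies in $\trans\beta$, and around each letter $y$ with both $y,y^{-1}\in B$. For $\beta([w])=[w]$, the word $\tilde w$ must cyclically graphically reduce back to $w$, so every inserted $b^{\pm 1}$ must cancel against some $b^{\mp 1}$ via a chain of letters commuting with $b$. Without loss of generality I assume $a\in B$ and $a^{-1}\notin B$, so each instance of $a$ in $w$ yields the subword $ab$ in $\tilde w$ and each $a^{-1}$ yields $b^{-1}a^{-1}$. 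I fix an instance of $a$ at some position $i$ and track the $b$ inserted immediately to its right.

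The key claim is that this $b$ cannot cancel. Since $a\sim b$, either $a,b$ are adjacent in $\Gamma$ (so they commute and $\lk(a)\setminus\{b\}=\lk(b)\setminus\{a\}$) or they are non-adjacent (so $\lk(a)=\lk(b)$ and $a,b$ do not commute). In the non-adjacent case, the $b$ cannot pass $a$ on its left, so any cancelling $b^{-1}$ must lie to its right and be connected through letters whose vertex lies in $\lk(b)=\lk(a)$. If this $b^{-1}$ sits just in front of some $a^{-1}$, then the intervening letters of $w$ all commute with $a$, so that $a^{-1}$ could be commuted leftward within $w$ to cancel with the $a$ at position $i$, contradicting that $w$ is graphically reduced. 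If instead the $b^{-1}$ comes from $\beta$'s action on some letter $y\in w$ with $y^{-1}\in B$, the same commutation requirement forces the intervening letters of $w$ to have their vertex in $\lk(a)$, and iterating the argument either reproduces the reduction obstruction or traps the $b$ at a blocker (another $a^{\pm 1}$ or a non-commuting letter not supplying a $b^{-1}$). The adjacent case will be parallel, with the $b$ now free to move leftward past $a$ but still constrained by $\lk(a)\setminus\{b\}=\lk(b)\setminus\{a\}$. The main obstacle is this combinatorial case analysis --- carefully enumerating the possible sources of a cancelling $b^{-1}$ arising from $\beta$'s action on letters other than $a^{\pm 1}$.
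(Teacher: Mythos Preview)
Your overall strategy---assume $a\sim b$ and derive a contradiction---is right, but the argument you sketch has a genuine gap, and the contradiction you aim for is not the one that actually works.

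The gap is in your handling of letters $y$ between $a$ and $a^{-1}$ that are \emph{conjugated} by $\beta$, i.e.\ with both $y,y^{-1}\in B$. For such $y$, $\beta(y)=b^{-1}yb$, so the inserted $b$ to the left of $y$ and the inserted $b^{-1}$ to its right allow the chain of cancellations to pass straight through $y$ \emph{without} $y$ lying in $\lk(b)$. Concretely, if $u$ is the subword between $a$ and $a^{-1}$ and every letter of $u$ is conjugated by $\beta$, then $\beta(aua^{-1})=ab\cdot b^{-1}ub\cdot b^{-1}a^{-1}=aua^{-1}$ even when no letter of $u$ commutes with $b$ (or with $a$). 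So your conclusion ``the intervening letters of $w$ all commute with $a$'' simply fails in this case, and graphic reducedness of $w$ gives no obstruction. Your proposed ``iteration'' does not escape this: at each step the chain can pass through a conjugated letter without producing anything in $\lk(a)$.

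What is actually being contradicted is not $\beta([w])=[w]$ but the \emph{length-minimality} of $[w]$ in its $\AAGo$-orbit (Corollary~\ref{co:lenmin}). The paper \emph{uses} $\beta([w])=[w]$ rather than trying to refute it: from $w=aua^{-1}v$ and $\beta([w])=[w]$ one deduces $\beta(u)=b^{-1}ub$ and $\beta(v)=v$. Then, since $a\sim b$, the Whitehead automorphism $(B-b,a)$ is defined, and by the substitution principle (Sublemma~\ref{sl:substitute}) one gets $(B-b,a)(u)=a^{-1}ua$ and $(B-b,a)(v)=v$, whence $(B-b,a)(w)=uv$ has length at most $|w|-2$. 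That contradicts Corollary~\ref{co:lenmin}. The key idea you are missing is this substitution trick: replace the multiplier $b$ by the domination-equivalent $a$ to turn the harmless $\beta$ into a shortening automorphism. Your argument never invokes orbit-minimality of $|[w]|$---only graphic reducedness---and that weaker hypothesis is genuinely insufficient.
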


\begin{proof}
Suppose for contradiction that $a\sim b$.
By the construction of $T'$, $w$ has the same length and support as $w_0$, so by Lemma~\ref{le:suppmin}, there is a single instance of $a$ and a single instance of $a^{-1}$ in $w$.
So write $w$ as the graphically reduced cyclic word $aua^{-1}v$.
Then $[u,a]\neq 1$ and $[v,a]\neq 1$.  
Since $a\sim b$, we know that $[u,b]\neq1$ and $[v,b]\neq 1$ as well.
Since $a\in\trans\beta$, we may assume that $a\in B$ and $a^{-1}\notin B$ (the case where $a^{-1}\in B$ and $a\notin B$ is similar).
We know $\beta([w])=[w]$ and $\beta(a)=ab$.
Since $[v,b]\neq 1$, $\beta$ cannot send $v$ to an element represented by a reduced word ending in $b^{-1}$ or beginning with $b$.
It follows that $\beta(u)=b^{-1}ub$ and $\beta(v)=v$.

Since $a\sim b$, we know from Lemma~\ref{le:whdef} of Day~\cite{fpraag} that $(B-b,a)$ is a well-defined Whitehead automorphism.
We know that $b$ does not appear in $u$ or $v$, so by Sublemma~\ref{sl:substitute} $(B-b,a)(u)=a^{-1}ua$ and $(B-b,a)(v)=v$.
Since $[u,a]\neq 1$, these expressions are graphically reduced.
Then $(B-b,a)(w)=aa^{-1}uaa^{-1}v=uv$, and $|uv|\leq|w|-2$.
This contradicts Corollary~\ref{co:lenmin}.
\end{proof}

\begin{lemma}\label{le:newmult}
Suppose $(B,b)\in\Omega$ such that $\pg{b}\notin\supp w_0$ and for some $[w]\in T'$, $\alpha_{[w]}^{-1}\beta\alpha_{[w]}\in S_e$.
Suppose $a\in L$ such that $a\geq b$ and $a,a^{-1}\notin B$.
Then $(B-b+a,a)$ fixes $[w]$, and $\alpha_{[w]}^{-1}(B-b+a,a)\alpha_{[w]}\in S_e$.
\end{lemma}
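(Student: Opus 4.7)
The plan is to reduce the lemma to a direct application of Sublemma~\ref{sl:substitute}, after first extracting an implicit loop condition from the hypothesis. Since $\alpha_{[w]}^{-1}(B,b)\alpha_{[w]} \in S_e \subset (\AAGo)_{[w_0]}$, applying this conjugate to $[w_0]$ yields $[w_0]$; rearranging using $\alpha_{[w]}([w_0]) = [w]$ gives $(B,b)([w]) = [w]$, so $(B,b)$ is already a loop at $[w]$ in $\Delta$. Combined with Lemma~\ref{le:awform}, this forces $\supp[w] = \supp w_0$, so $\pg{b}\notin\supp[w]$, and hence any graphically reduced cyclic representative $v$ of $(B,b)([w]) = [w]$ contains no instance of $b^{\pm 1}$.

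Next I would apply Sublemma~\ref{sl:substitute} with $[u] = [w]$: its hypotheses $a\geq b$, $a, a^{-1}\notin B$, and $\pg{b}\notin\supp[w]$ are all in place, so substituting $a^{\pm1}$ for $b^{\pm1}$ in $v$ produces a representative $v'$ of $(B-b+a,a)([w])$. Since $v$ has no $b^{\pm 1}$, the substitution is trivial and $v' = v$, giving $(B-b+a,a)([w]) = [w]$, which is the first conclusion. For the $S_e$ membership, I would check that $(B-b+a,a)$ is a well-defined Whitehead automorphism by Lemma~\ref{le:whdef} of Day~\cite{fpraag} (using $a\geq b$ and $a, a^{-1}\notin B$), that it is a loop at $[w]\in T'$ in $\Delta$ by the preceding sentence, and that $\trans(B-b+a,a) = \trans(B,b) \subset \supp w_0$. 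For this last equality, the only letters whose membership status could change are $\pg{b}$ and $\pg{a}$: the former becomes a fixed non-multiplier letter (neither $b$ nor $b^{-1}$ lies in $B-b+a$), and the latter becomes the new multiplier, so neither contributes to either trans-set; all other letters have the same symmetric-difference status in $B$ and in $B-b+a$.

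The main obstacle I anticipate is conceptual rather than technical: recognizing that the hypothesis $\alpha_{[w]}^{-1}(B,b)\alpha_{[w]} \in S_e$ secretly encodes the loop condition $(B,b)([w]) = [w]$ (via $S_e \subset (\AAGo)_{[w_0]}$). Once this observation is in hand, the argument is a mechanical invocation of Sublemma~\ref{sl:substitute} together with bookkeeping against the $S_e$ definition, and no further machinery is required.
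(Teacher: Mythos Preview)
Your proposal is correct and follows essentially the same approach as the paper's proof, which reads in full: ``It is immediate from Sublemma~\ref{sl:substitute} that $(B-b+a,a)$ fixes $[w]$. Note that $\trans(B-b+a,a)=\trans(B,b)\subset\supp w_0$, so $\alpha_{[w]}^{-1}(B-b+a,a)\alpha_{[w]}\in S_e$.'' Your version is simply more explicit in unpacking the loop condition $(B,b)([w])=[w]$ and the hypothesis $\pg{b}\notin\supp[w]$ needed to invoke Sublemma~\ref{sl:substitute}, both of which the paper leaves to the reader.
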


\begin{proof}
It is immediate from Sublemma~\ref{sl:substitute} that $(B-b+a,a)$ fixes $[w]$.
Note that $\trans(B-b+a,a)=\trans(B,b)\subset\supp w_0$, so $\alpha_{[w]}^{-1}(B-b+a,a)\alpha_{[w]}\in S_e$.
\end{proof}

\begin{lemma}\label{le:cinQrewrite}
Suppose $\alpha_{[w]}^{-1}(B,b)\alpha_{[w]}\in S_e$ and $\pg{b}\in \supp Q$.
If $\gamma$ is a product of transvections acting only on $b$ 
then $\gamma\alpha_{[w]}^{-1}(B,b)\alpha_{[w]}\gamma^{-1}$ is in $\langle S_e\cup S_k\rangle$.
\end{lemma}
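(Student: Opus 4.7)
The plan is to induct on the number of factors when $\gamma$ is written as a product $\tau_{a_1,b}^{\epsilon_1}\cdots\tau_{a_r,b}^{\epsilon_r}$ of transvections on $b$; the base case $r=0$ is immediate from the hypothesis $\alpha_{[w]}^{-1}(B,b)\alpha_{[w]}\in S_e$. Rather than naively conjugating abstract elements of $\langle S_e\cup S_k\rangle$ by one transvection at a time, I would compute $\gamma\alpha_{[w]}^{-1}(B,b)\alpha_{[w]}\gamma^{-1}$ directly by first rewriting it in a convenient form and then expanding.

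The main computational input is a direct conjugation identity, proved by evaluating both sides on the generators of $A_\Gamma$: in the generic case $a\in X$, $a\geq b$, and $a,a^{-1}\notin B$,
\[\tau_{a,b}(B,b)\tau_{a,b}^{-1}=(B,b)\,(B-\{b\}+\{a\},a),\]
with analogous identities when $a\in B$ or $a^{-1}\in B$. Set $(B',a):=(B-\{b\}+\{a\},a)$. One verifies $\trans(B',a)\subseteq\trans(B,b)\subseteq\supp w_0$; moreover, since $\alpha_{[w]}^{-1}(B,b)\alpha_{[w]}$ fixes $[w_0]$ the Whitehead $(B,b)$ fixes $[w]$, and since $\tau=\tau_{a,b}$ fixes $[w]$ (because $\pg{b}\notin\supp w_0=\supp[w]$), the identity forces $(B',a)$ to fix $[w]$ as well. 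Hence $(B',a)$ is a loop at $[w]\in T'$ in $\Delta$, so $\alpha_{[w]}^{-1}(B',a)\alpha_{[w]}$ is an edge generator in $S_e$.

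To handle the full conjugation by $\gamma$, I would rewrite
\[\gamma\alpha_{[w]}^{-1}(B,b)\alpha_{[w]}\gamma^{-1}=\alpha_{[w]}^{-1}\tilde\gamma\,(B,b)\,\tilde\gamma^{-1}\alpha_{[w]},\qquad\tilde\gamma:=\alpha_{[w]}\gamma\alpha_{[w]}^{-1},\]
and use Lemma~\ref{le:kerrewrite} applied iteratively (with $\alpha_{[w]}^{-1}$ in the role of $\alpha$, and with the homology-matching $\delta$ produced via Lemma~\ref{le:knowdom}) to write $\tilde\gamma=\kappa\delta$ with $\kappa\in\langle S_k\rangle$ and $\delta$ a product of transvections on $b$. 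Iterating the conjugation identity one transvection at a time inside $\delta(B,b)\delta^{-1}$ expresses it as a product of Whitehead automorphisms of the type above, each of which, once conjugated by $\alpha_{[w]}^{\pm 1}$, lies in $S_e$. Both $\tilde\gamma$ and $\delta$ fix $X-\{b\}$ (since each Whitehead in $\alpha_{[w]}$ has multiplier in $\supp w_0$ and thus leaves the $b$-free portion of the word intact), so $\kappa=\tilde\gamma\delta^{-1}$ does as well; the third part of Lemma~\ref{le:kerrewrite} then gives $\alpha_{[w]}^{-1}\kappa^{\pm 1}\alpha_{[w]}\in\langle S_k\rangle$, and inserting $\alpha_{[w]}\alpha_{[w]}^{-1}$ between consecutive factors yields the desired expression in $\langle S_e\cup S_k\rangle$.

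The main obstacle is the iterated application of Lemma~\ref{le:kerrewrite} used to write $\tilde\gamma=\kappa\delta$: each iteration produces interleaved $\langle S_k\rangle$-pieces and transvection-on-$b$ pieces, and these must be collected by repeatedly conjugating $\langle S_k\rangle$ past transvections on $b$ to sort them. This step relies on the hypothesis $\pg{b}\in\supp Q$: it guarantees, via Sublemmas~\ref{sl:conjidsconj} and~\ref{sl:conjidstrans}, that every partial conjugation $c_{x,\{c\}}$ and commutator transvection $\tau_{[x,y],c}$ that appears upon commuting has $c\in\supp Q$, so these generators truly lie in $S_k$ rather than in some larger $K_Z$.
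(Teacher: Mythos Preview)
Your overall strategy matches the paper's: pass $\gamma$ through $\alpha_{[w]}^{-1}$ via Lemmas~\ref{le:kerrewrite}/\ref{le:rewritegens} to reduce to computing $\delta(B,b)\delta^{-1}$ for $\delta$ a product of transvections on $b$, then use the identity $\tau_{a,b}(B,b)\tau_{a,b}^{-1}=(B,b)(B-b+a,a)$ together with Lemma~\ref{le:newmult} to express the result as a product of Whitehead loops at $[w]$.

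However, there is a genuine gap. Your conjugation identity requires $a,a^{-1}\notin B$, and you dismiss the remaining cases with ``analogous identities when $a\in B$ or $a^{-1}\in B$''. But if $a\in\trans(B,b)$ (exactly one of $a,a^{-1}$ in $B$), no such clean identity exists: for instance if $a\in B$ and $a^{-1}\notin B$ then $\tau_{a,b}(B,b)\tau_{a,b}^{-1}$ sends $a\mapsto aba$, which is not of Whitehead type. The paper handles this via Lemma~\ref{le:nobadcase}: since $\trans(B,b)\subset\supp w_0$, if $a\in\trans(B,b)$ then $a\in\supp w_0$ and $b\geq a$ (being in the transvection set of $(B,b)$), so $a\sim b$, contradicting Lemma~\ref{le:nobadcase}. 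Hence $a\notin\trans(B,b)$, and after multiplying $(B,b)$ by an inner automorphism one may assume $a,a^{-1}\notin B$. You need this argument, and the hypothesis $\pg{b}\in\supp Q$ enters precisely here (it gives $\pg{b}\notin\supp w_0$, which is what Lemma~\ref{le:nobadcase} requires).

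A secondary gap: when you iterate, the factor $(B-b+a,a)$ just produced must then be conjugated by the next $\tau_{a',b}$. The paper checks explicitly that these commute up to inner automorphisms (since $b,b^{-1}\notin B-b+a$ and, by the same Lemma~\ref{le:nobadcase} argument, $a'\notin\trans(B-b+a,a)=\trans(B,b)$). Your phrase ``iterating the conjugation identity one transvection at a time'' does not address this. Finally, note you have conflated Lemma~\ref{le:kerrewrite} with Lemma~\ref{le:rewritegens}; the ``third part'' you invoke belongs to the latter.
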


\begin{proof}
Let $S_{[w]}$ be the union of $S_k$ with the set of $(C,c)\in\Omega$ such that $(C,c)([w])=[w]$ and $\trans(C,c)\subset\supp w_0$. 

As a base case, consider the effect of a single transvection $\tau_{a,b}$ on a $(B,b)$, where $a\in L$ and $a\geq b$.  
Note that if $a\in\trans(B,b)$, then $a\in\supp w_0$ and $b\geq a$.
Then $a\sim b$, contradicting Lemma~\ref{le:nobadcase}.
So we know $a\notin\trans(B,b)$.
Possibly by multiplying $(B,b)$ by an inner automorphism, we may assume $a\notin B$.
Then since $a\notin\trans(B,b)$, we know $a^{-1}\notin B$.
Since $\tau_{a,b}=(\{a,b\},a)$ and $b^{-1}\notin\{a,b\}$, we have the following special case of Equation~(\ref{eq:R4}) of Day~\cite{fpraag}:
\[(B,b)^{-1}\tau_{a,b}(B,b)=\tau_{a,b}(B-b+a,a)\]
We rephrase this as:
\[
\tau_{a,b}(B,b)\tau_{a,b}^{-1}=(B,b)(B-b+a,a)
\]
Similarly, note that:
\[
\tau_{a,b^{-1}}(B,b)\tau_{a,b^{-1}}^{-1}=(B-b+a^{-1},a^{-1})(B,b)
\]
By Lemma~\ref{le:newmult}, $(B-b+a,a)$ and $(B-b+a^{-1},a^{-1})$ both 
preserve $[w]$.
Of course, $\trans(B-b+a^{-1},a^{-1})=\trans(B-b+a,a)=\trans(B,b)\subset\supp w_0$.

Now consider $\tau_{a',b}$ for some $a'\in L$ with $a\geq b$.
If $\pg{a'}=\pg{a}$, then $\tau_{a,b}$ and $(B-b+a,a)$ commute.
Since $\trans(B-b+a,a)=\trans(B,b)$, and since by Lemma~\ref{le:nobadcase}, we know $a'\notin\trans(B,b)$, we know $a'\notin\trans(B-b+a,a)$.
Further, we know that $b, b^{-1}\notin B-b+a$.
Then by Equation~(\ref{eq:R3}) of Day~\cite{fpraag}, up to an inner automorphism, $\tau_{a',b}$ and $(B-b+a,a)$ commute.
Similarly, $\tau_{a',b^{-1}}$ and $(B-b+a,a)$ commute up to an inner automorphism.

So if $\gamma$ is a product of transvections acting only on $b$, then up to inner automorphisms, $\gamma(B,b)\gamma^{-1}$ is $(B,b)$ times some number of elements of the form $(B-b+a,a)$ for various $a\geq b$.
Then in particular, $\gamma(B,b)\gamma^{-1}$ is in $\langle S_{[w]}\rangle$

Now suppose $\gamma$ is a product of transvections acting only on $b$.
From Lemma~\ref{le:rewritegens}, there is a product $\gamma'$ of transvections acting on $b$ and a $\delta\in\langle S_e\rangle$ with $\gamma\alpha_{[w]}=\delta\alpha_{[w]}^{-1}\gamma'$.
We deduce from the previous paragraph there is an element $\beta\in\langle S_{[w]}\rangle$ with $\gamma'(B,b)\gamma'^{-1}=\beta$.
Then we have:
\[\gamma\alpha_{[w]}^{-1}(B,b)\alpha_{[w]}\gamma^{-1}=\delta\alpha_{[w]}^{-1}\beta\alpha_{[w]}\delta^{-1}\] 
By Lemma~\ref{le:rewritegens}, if $\delta'\in S_k$, then $\alpha_{[w]}^{-1}\delta'\alpha_{[w]}\in\langle S_k\rangle$.
And if $(C,c)\in S_{[w]}$, then $\alpha_{[w]}^{-1}(C,c)\alpha_{[w]}\in S_e$.
So $\alpha_{[w]}^{-1}\beta\alpha_{[w]}\in\langle S_e\cup S_k\rangle$, proving the lemma.
\end{proof}

\begin{lemma}\label{le:across1}
Suppose $b\in \supp Q$, $\gamma_1$ is a product of transvections acting on $b$ and $\alpha\in S_e\cup S_k$.
Then there is a $b'\in\supp Q$ and a product $\gamma_2$ of transvections acting on $b'$ such that:
\[\gamma_1\alpha\gamma_2^{-1}\in\langle S_e\cup S_k\rangle\]
\end{lemma}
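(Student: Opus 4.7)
The plan is to prove Lemma~\ref{le:across1} by induction on the number of transvection factors in $\gamma_1$, with a single-transvection base case. The crucial feature of the base case is that we can always take $b' = b$, so that the letter acted on never changes. Granting this, given $\gamma_1 = \tau\gamma_1'$ with $\tau$ a single transvection acting on $b$, I apply the inductive hypothesis to $\gamma_1'\alpha$ to write it as $\alpha'\gamma'$ with $\alpha'\in\langle S_e\cup S_k\rangle$ and $\gamma'$ acting on $b$; then I expand $\alpha' = \alpha_1\cdots\alpha_m$ as a product of $(S_e\cup S_k)$-generators and invoke the base case successively to push $\tau$ rightward across each $\alpha_i$. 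Each individual push can produce a product of transvections rather than a single one, but all remain transvections acting on $b$; so the final $\gamma_2$ takes the form $\tau''\gamma'$ with $\tau''$ still acting on $b$.

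For the base case, write $\gamma_1 = \tau_{a,b}$ (the case $\tau_{a,b^{-1}}$ being parallel) and split into Case~1 ($\alpha\in S_k$) and Case~2 ($\alpha\in S_e$). In Case~1 I compute the commutator $\tau_{a,b}\alpha\tau_{a,b}^{-1}$ using the identities from Sublemmas~\ref{sl:conjidsconj} and~\ref{sl:conjidstrans}, splitting into subcases according to coincidences among $a,b$ and the indices appearing in $\alpha$. In each subcase the commutator is an explicit short product of generators of $K_{\{a,b,c,x,y\}}$, and using $b\in\supp Q$, the hypothesis $c\in\supp Q$ built into $S_k$, and transitivity of the domination relation applied to the chains $a\geq b$ and $x,y\geq c$, every resulting dominated index is forced into $\supp Q$, so each factor lies in $S_k$. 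Setting $\gamma_2 = \tau_{a,b}$ then gives $\tau_{a,b}\alpha\gamma_2^{-1}\in\langle S_k\rangle$.

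In Case~2, $\alpha = \alpha_{[w']}^{-1}\beta\alpha_{[w]}\in S_e$, and I push $\tau_{a,b}$ across the three factors in turn. By Lemma~\ref{le:awform}, $\alpha_{[w]}$ and $\alpha_{[w']}$ are products of Whitehead automorphisms $(A_i,a_i)$ with $a_i\in\supp w_0$ and $\trans(A_i,a_i)\subset\supp w_0$; since $b\notin\supp w_0$, each factor either fixes $b$ or conjugates it by $a_i$, and the standard commutation identities combined with Lemma~\ref{le:kerrewrite} yield that the conjugate of $\tau_{a,b}$ is a product of transvections acting on $b$ times an element of $\langle S_k\rangle$. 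For the middle factor $\beta$: if $\beta$ is a permutation fixing $(\supp Q)^{\pm 1}$ pointwise, it commutes with a transvection acting on $b$; if $\beta = (B,c)$ with $c\neq b^{\pm 1}$, the reasoning for $\alpha_{[w]}$ applies verbatim; and if $\beta = (B,b)$, Lemma~\ref{le:cinQrewrite} supplies the desired commutation directly.

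The main obstacle will be the combinatorial bookkeeping in Case~1 of the base case. One must verify, across all the subcases of the identities in Sublemmas~\ref{sl:conjidsconj} and~\ref{sl:conjidstrans}, that the auxiliary partial conjugations and commutator transvections appearing on the right-hand side have their dominated index in $\supp Q$ (rather than merely in $X$), so that they qualify as generators of $S_k$ and not only of the larger $K_Z$. This is ultimately ensured by the chain of dominations $a\geq b\in\supp Q$ together with the hypothesis that the original $\alpha$ already has its dominated index in $\supp Q$, but the verification must be carried out one subcase at a time, in the same style as the proofs of the sublemmas themselves.
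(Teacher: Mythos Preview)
Your base-case analysis is essentially the paper's proof: for $\alpha\in S_k$ you conjugate and invoke Sublemmas~\ref{sl:conjidsconj} and~\ref{sl:conjidstrans}; for $\alpha\in S_e$ you split on whether the multiplier of $\beta$ equals $b$ and invoke Lemma~\ref{le:cinQrewrite} or Lemma~\ref{le:rewritegens} accordingly. The paper does exactly this, only it handles the full product $\gamma_1$ at once rather than reducing to a single transvection first.

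There is, however, a genuine gap in your inductive scheme. In the step you write $\gamma_1=\tau\gamma_1'$, use the hypothesis to get $\gamma_1'\alpha=\alpha'\gamma'$ with $\alpha'=\alpha_1\cdots\alpha_m\in\langle S_e\cup S_k\rangle$, and then propose to push the single $\tau$ across each $\alpha_i$. But your base case gives $\tau\alpha_1=\delta_1\eta_1$ with $\eta_1$ a \emph{product} of transvections on $b$, and now you must push $\eta_1$ across $\alpha_2$. This is the lemma itself for a product whose length you do not control; it is not the base case, and there is no reason $|\eta_1|<|\gamma_1|$, so the induction is not well-founded. The paper sidesteps this: Lemma~\ref{le:cinQrewrite} is already stated for an arbitrary product $\gamma$, and for the case $\pg c\neq b$ one iterates the single-$\tau$ statement of Lemma~\ref{le:rewritegens} by conjugating $\gamma_1=\tau_1\cdots\tau_n$ factor by factor and then reabsorbing the resulting $\langle S_k\rangle$-terms using the normality you already established in Case~1. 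That reorganization is what makes the argument terminate; your outer induction on $|\gamma_1|$ is both unnecessary and, as written, circular.

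One smaller correction: in the permutation subcase of Case~2 you assert that $\beta$ commutes with $\tau_{a,b}$. It does not in general, since $\beta$ fixes $(\supp Q)^{\pm1}$ pointwise but may move $a\in\supp w_0$. What is true is $\beta^{-1}\tau_{a,b}\beta=\tau_{\beta^{-1}(a),b}$, which is still a transvection acting on $b$; combined with Lemma~\ref{le:kerrewrite} (applied with $\alpha'=\beta\alpha_{[w]}\beta^{-1}$ as in the paper) this suffices, so the slip is harmless once corrected.
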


\begin{proof}
First we note that if $\alpha\in S_k$, then we can take $\gamma_2=\gamma_1$ and the lemma follows from the identities in Sublemma~\ref{sl:conjidsconj} and Sublemma~\ref{sl:conjidstrans}.

Now suppose $\alpha=\alpha_{\beta([w])}^{-1}\beta\alpha_{[w]}$ and $\beta$ is a permutation automorphism.
Since $[w]$ and $\beta([w])$ are both in $T'$, we know that $\supp [w]=\supp\beta([w])=\supp w_0$.
So $\beta$ leaves $\supp Q$ invariant and we can set $b'=\pg{\beta^{-1}(b)}\in\supp Q$.
By Lemma~\ref{le:knowdom}, we can find a product $\gamma_2$ of transvections acting only on $b'$, such that $\gamma_2$ sends the image of $b'$ in $H_\Gamma$ to the same element that $\alpha_{[w]}^{-1}\beta^{-1}\alpha_{\beta[w]}\gamma_1^{-1}$ sends it to.
Let $\alpha'=\beta\alpha_{[w]}\beta^{-1}$ and let $\gamma'=\beta\gamma_2\beta^{-1}$.
Then by Equation~(\ref{eq:R6}) of Day~\cite{fpraag}, $\alpha'$ can be written as a product of non-permutation automorphisms with multipliers not equal to $\pg{b}$ and $\gamma'$ can be written as a product of transvections acting only on $b$.
It then follows from Lemma~\ref{le:kerrewrite} that
\[\gamma_1(\alpha_{\beta([w])}^{-1}\beta\alpha_{[w]})\gamma_2(\alpha_{[w]}^{-1}\beta^{-1}\alpha_{\beta([w])})=\gamma_1(\alpha_{[w]}\alpha')\gamma'(\alpha_{[w]}\alpha')^{-1}\in\langle S_k\rangle\]
which proves the lemma in this case.

If $\alpha=\alpha_{(C,c)([w])}^{-1}(C,c)\alpha_{[w]}$,
then the lemma follows from Lemma~\ref{le:cinQrewrite} if $\pg{c}=\pg{b}$ and from Lemma~\ref{le:rewritegens} if $\pg{c}\neq\pg{b}$.
\end{proof}

\begin{proof}[Proof of Proposition~\ref{pr:1ofeach}]
For each $b\in\supp Q$, take $S(b)$ to be the subgroup generated by $\{\tau_{a,b}|a\in X, a\geq b\}$ and the inversion with respect to $b$, and take
\[S=\bigcup_{b\in\supp Q}S(b).\]
For $\alpha\in(\AAGo)_{[w_0]}$, take $d(\alpha)$ to be the minimum number of elements of $S$ appearing in any factorization of $\alpha$ as a product of elements of $(S_k\cup S_e\cup S)^{\pm1}$.
Note $S_i\subset S$, so such a factorization exists by Proposition~\ref{pr:Sektgens}.

We will prove this proposition by induction on $d(\alpha)$.
If $d(\alpha)=0$, then the proposition is obviously true.
Now suppose that we have $\alpha=\beta\gamma\alpha'$, where $\gamma\in S(b)$ for some $b\in\supp Q$,  $\beta\in \langle S_e\cup S_k\rangle$, and $\alpha'\in(\AAGo)_{[w_0]}$ with $d(\alpha')=d(\alpha)-1$.
By repeated application of Lemma~\ref{le:across1}, we know we have some $b'\in \supp Q$, some $\gamma'\in S(b')$, and some $\beta'\in\langle S_k\cup S_e\rangle$ with $\beta\gamma=\gamma'\beta'$.
Then $\alpha=\gamma'\beta'\alpha'$.
Since $d(\beta'\alpha')\leq d(\alpha)-1$, we can apply the inductive hypothesis and get $\gamma''\in\langle S_i\rangle$ and $\beta''\in\langle S_k\cup S_e\rangle$ with $\beta'\alpha'=\gamma''\beta''$.
Then $\alpha=\gamma'\gamma''\beta''$; since $\gamma'\gamma''\in\langle S_i\rangle$, we are done.
\end{proof}

\begin{lemma}\label{le:SkeQ}
We have $\langle S_k\cup S_e\rangle < (\AAGo)_Q$.
\end{lemma}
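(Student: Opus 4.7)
The plan is to verify that each generator in $S_k\cup S_e$ individually preserves $Q$, from which the preservation of $Q$ by the whole group $\langle S_k\cup S_e\rangle$ follows. The foundational observation I would establish first is that $\supp w_0$ and $\supp Q$ are disjoint subsets of $X$: by the pairing chosen at the start of this section, $\supp w_0=\{\pg{a_i},\pg{a_i^*}:i\leq k\}$ consists of those indices with $[a_i,a_i^*]\neq 1$ (equivalently $a_i^*\notin\lkl{a_i}$), while $\supp Q=\{\pg{a_i},\pg{a_i^*}:i>k\}$ consists of the complementary indices.

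Next I would handle $S_k$. Every generator $\tau_{[x,y],c}$, $c_{x,\{c\}}$, or $c_x$ in $S_k$ lies in $\IAut A_\Gamma=\Ker(\AAG\to\Aut H_\Gamma)$, since commutators, partial conjugations, and total conjugations all act trivially on the abelianization. Hence they act trivially on $\Lambda^2 H_\Gamma$ and so fix $Q$.

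For $S_e$ I would treat the two cases of the definition in parallel. The central computation is that any Whitehead automorphism $(A,a)$ with $\trans(A,a)\subset\supp w_0$ preserves $Q$: on $H_\Gamma$, $(A,a)_*$ fixes $[x]$ whenever $x\notin\trans(A,a)$, hence fixes each $[a_i]$ and $[a_i^*]$ appearing in a summand of $Q$ (since those $a_i$ and $a_i^*$ lie in $\supp Q$, which is disjoint from $\supp w_0\supset\trans(A,a)$). Given this, for a non-permutation edge generator $\alpha_{(B,b)([w])}^{-1}(B,b)\alpha_{[w]}$, the middle factor $(B,b)$ preserves $Q$ by hypothesis and the central computation, while Lemma~\ref{le:awform} expresses each $\alpha_{[w']}$ (for $[w']\in T'$) as a product of Whitehead automorphisms of precisely this form, so these preserve $Q$ as well.

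For a permutation edge generator $\alpha_{\beta([w])}^{-1}\beta\alpha_{[w]}$ with $\beta$ a permutation automorphism fixing $(\supp Q)^{\pm1}$ pointwise: the induced map $\beta_*$ fixes each basis vector $[a_i]$ and $[a_i^*]$ summing to $Q$, so $\beta$ preserves $Q$; combined with the preservation by $\alpha_{[w]}$ and $\alpha_{\beta([w])}$ established above, the product preserves $Q$. The only substantive ingredient in all of this is the initial disjointness observation; after that, the verification is routine bookkeeping using Lemma~\ref{le:awform}, and I do not anticipate a genuine obstacle.
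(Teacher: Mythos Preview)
Your proposal is correct and follows essentially the same approach as the paper's proof: both handle $S_k$ via the homology representation kernel, then argue that each $\alpha_{[w]}$ preserves $Q$ because it factors (via Lemma~\ref{le:awform}) as a product of Whitehead automorphisms $(A,a)$ with $\trans(A,a)\subset\supp w_0$, and finally treat the middle factor $\beta$ of an edge generator by the two cases in the definition of $S_e$. Your version is more explicit about the disjointness of $\supp w_0$ and $\supp Q$ and about why $(A,a)_*$ fixes each $[x]$ with $x\notin\trans(A,a)$, but this is exactly what the paper's terser argument is using implicitly.
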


\begin{proof}
For $\delta\in S_k$, since $\delta\in\Ker(\AAGo\to\Aut H_\Gamma)$, it is obvious that $\delta_*Q=Q$.

For $[w]\in T'$, note that $(\alpha_{[w]})_*Q=Q$.
This is because $\alpha_{[w]}$ is a product of elements $(A,a)$ with $\trans(A,a)\subset\supp w_0$.
If $\alpha=\alpha_{\beta([w])}^{-1}\beta\alpha_{[w]}\in S_e$, then either $\beta$ is a permutation fixing $(\supp Q)^{\pm1}$ or $\beta$ is a non-permutation Whitehead automorphism with $\trans \beta\subset \supp w_0$.
In either case, it follows that $\alpha_*Q=Q$.
\end{proof}

\begin{theorem}\label{th:almostthere}
The group $(\AAGo)_{([w_0],Q)}$ is generated by the finite set $S_e\cup S_k\cup S_Q$.
\end{theorem}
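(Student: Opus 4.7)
The plan is to factor each $\alpha\in(\AAGo)_{([w_0],Q)}$ as $\beta\eta\epsilon$ with $\beta\in\langle S_e\cup S_k\rangle$, $\eta\in\langle S_k\rangle$, and $\epsilon\in\langle S_Q\rangle$. First I would apply Proposition~\ref{pr:1ofeach} to write $\alpha=\beta\gamma$ with $\beta\in\langle S_e\cup S_k\rangle$ and $\gamma\in\langle S_i\rangle$. By Lemma~\ref{le:SkeQ}, $\beta$ fixes $Q$, so $\gamma=\beta^{-1}\alpha$ also fixes $Q$. Passing to the abelianization, $\gamma_*\in G$ stabilizes $Q$, so Theorem~\ref{th:domspred} expresses $\gamma_*$ as a product of standard dominated $Q$--transvections and $Q$--inversions. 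Each such factor has a preferred lift in $S_Q$, and assembling these produces $\epsilon\in\langle S_Q\rangle$ with $\epsilon_*=\gamma_*$. A direct check shows $\langle S_Q\rangle\subset\langle S_i\rangle$, so setting $\eta=\gamma\epsilon^{-1}$ we obtain an element of $\langle S_i\rangle\cap\IAut A_\Gamma$; in particular $\eta$ fixes every element of $X\setminus\supp Q$ pointwise.

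The hard part will be to show $\eta\in\langle S_k\rangle$. My approach is to adapt the proof of Proposition~\ref{pr:KZisker} to the group $\langle S_i\rangle$. Concretely, the image $G=\rho(\langle S_i\rangle)$ is generated by the $E_{a,b}$ (for $a\in X$, $b\in\supp Q$, $a\geq b$) together with the inversions $N_b$ ($b\in\supp Q$), and I expect it to admit a presentation closely analogous to the one from Corollary~\ref{co:tvgenstruct} of Day~\cite{fpraag} that drove the proof of Proposition~\ref{pr:KZisker}, the only change being that the ``multiplier'' indices are permitted to lie in $X$ rather than just $\supp Q$. The key calculation is then to verify that each defining relator lifts to an element of $\langle S_k\rangle$: commutators of transvections sharing a common target $b\in\supp Q$ lift to $\tau_{[\cdot,\cdot],b}\in S_k$; the identity $\tau_{a,b^{-1}}\tau_{a,b}=c_{a,\{b\}}$ produces one-term partial conjugations which lie in $S_k$ whenever $b\in\supp Q$; and the order-two relations involving the inversions $N_b$ are handled by the same computations as in Proposition~\ref{pr:KZisker}. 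Combined with the normality arguments from Lemma~\ref{le:KZnormal} and the fact that every inner automorphism lies in $S_k$, this yields $\langle S_i\rangle\cap\IAut A_\Gamma\subset\langle S_k\rangle$, hence $\eta\in\langle S_k\rangle$.

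Once this is established, $\alpha=\beta\eta\epsilon\in\langle S_e\cup S_k\cup S_Q\rangle$, completing the proof. The set $S_e\cup S_k\cup S_Q$ is finite since the Whitehead graph $\Delta$ from Definition~\ref{de:whiteheadg} is finite (being a subgraph of a locally finite graph on the finitely many conjugacy classes of length $|w_0|$) and the vertex set $X$ is finite.
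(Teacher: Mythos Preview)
Your argument follows the paper's proof almost exactly: apply Proposition~\ref{pr:1ofeach} to write $\alpha=\beta\gamma$, use Lemma~\ref{le:SkeQ} to see that $\gamma_*Q=Q$, invoke Theorem~\ref{th:domspred} to produce $\epsilon\in\langle S_Q\rangle$ with $\epsilon_*=\gamma_*$, and then show $\eta=\gamma\epsilon^{-1}\in\langle S_k\rangle$. The only difference is in this last step. The paper dispatches it in a single sentence, citing Proposition~\ref{pr:KZisker} with $Z=\supp Q$; you instead call it ``the hard part'' and propose to rework that proposition for the group $\langle S_i\rangle$.

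Your caution is not unreasonable: literally, $\gamma\epsilon^{-1}$ lies in $\langle S_i\rangle$ rather than in $G_{\supp Q}$, since $S_i$ allows multipliers in all of $X$ and contains inversions, while $G_Z$ does not. So Proposition~\ref{pr:KZisker} as stated does not apply on the nose. But the fix is much lighter than a full rederivation. The generators of $S_k$ were already defined with $x,y\in X$ arbitrary and only $c\in\supp Q$, precisely to accommodate this; the identities in Sublemmas~\ref{sl:conjidsconj} and~\ref{sl:conjidstrans} were proved for arbitrary vertices and show that $\langle S_k\rangle$ is normal in the group generated by $\{\tau_{a,b}:a\in X,\ b\in\supp Q,\ a\geq b\}$ together with conjugations (the targets appearing on the right-hand sides of those identities are always $b$ or $c$, both of which lie in $\supp Q$ in our situation). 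The lifted relations (1)--(4) from the proof of Proposition~\ref{pr:KZisker} land in $\langle S_k\rangle$ for the same reasons as before, and inversions $N_b$ with $b\in\supp Q$ normalize $\langle S_k\rangle$ by inspection. So the paper's one-line citation should be read as invoking the \emph{argument} of Proposition~\ref{pr:KZisker} rather than its literal statement, and your plan is correct but should be presented as a short remark rather than a new computation. In particular, you should not leave the presentation of $\rho(\langle S_i\rangle)$ at the level of ``I expect''; either verify it or, better, bypass it by the direct normality-plus-relator-lifting argument just described.
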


\begin{proof}
As previously noted, $S_e$ and $S_k$ are finite.
The set $S_Q$ is finite because $Q$ is finite.
Now suppose that $\alpha\in(\AAGo)_{([w_0],Q)}$.
By Proposition~\ref{pr:1ofeach}, we can rewrite $\alpha$ as $\beta\gamma$ where $\beta\in\langle S_e\cup S_k\rangle$ and $\gamma\in \langle S_i\rangle$.

By Lemma~\ref{le:SkeQ}, we know that $\beta_*Q=Q$.
Since $\alpha_*Q=Q$, it follows that $\gamma_*Q=Q$.
Then by Theorem~\ref{th:domspred}, there is an element $\delta\in\langle S_Q\rangle$ such that $\gamma\delta^{-1}\in\Ker(\AAGo\to\Aut H_\Gamma)$.
Then by Proposition~\ref{pr:KZisker} (with $Z=\supp Q$), we know that $\gamma\delta^{-1}\in\langle S_k\rangle$.
Since $\alpha=\beta(\gamma\delta^{-1})\delta$, we have proven the theorem.
\end{proof}

\begin{proposition}\label{pr:nonought}
The group $(\AAG)_{([w_0],Q)}$ is finitely generated.
\end{proposition}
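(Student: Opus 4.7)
The plan is to leverage Theorem~\ref{th:almostthere}, which gives finite generation of $(\AAGo)_{([w_0],Q)}$, and upgrade this to finite generation of $(\AAG)_{([w_0],Q)}$ by exploiting that $\AAGo$ has finite index in $\AAG$. Recall from the Background section that $\AAGo$ is normal in $\AAG$ and the quotient $\AAG/\AAGo$ is finite (being a quotient of $\Aut\Gamma$).

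First I would set $H=(\AAG)_{([w_0],Q)}$ and observe the elementary group-theoretic fact that, since $\AAGo$ is normal in $\AAG$,
\[
H\cap \AAGo = (\AAGo)_{([w_0],Q)}
\]
is normal in $H$, and the inclusion $H\hookrightarrow \AAG$ induces an injection
\[
H/(H\cap \AAGo)\hookrightarrow \AAG/\AAGo.
\]
Since $\AAG/\AAGo$ is finite, $(\AAGo)_{([w_0],Q)}$ has finite index in $(\AAG)_{([w_0],Q)}$.

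Next I would invoke Theorem~\ref{th:almostthere}, which provides a finite generating set $S_e\cup S_k\cup S_Q$ for $(\AAGo)_{([w_0],Q)}$. Now apply the standard fact that if a group has a finitely generated subgroup of finite index, then the group itself is finitely generated: choose a finite set of coset representatives $\{\sigma_1,\ldots,\sigma_r\}\subset (\AAG)_{([w_0],Q)}$ for $(\AAGo)_{([w_0],Q)}$ in $(\AAG)_{([w_0],Q)}$, and conclude that $S_e\cup S_k\cup S_Q\cup\{\sigma_1,\ldots,\sigma_r\}$ generates $(\AAG)_{([w_0],Q)}$.

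There is essentially no obstacle here beyond assembling the pieces; the only mild subtlety is to make sure one has $(\AAGo)_{([w_0],Q)}=(\AAG)_{([w_0],Q)}\cap \AAGo$, which is immediate from the definitions, and that the coset representatives can be chosen inside the stabilizer $(\AAG)_{([w_0],Q)}$, which follows because each nontrivial coset of $(\AAGo)_{([w_0],Q)}$ in $(\AAG)_{([w_0],Q)}$ contains some element by definition. This completes the proof of Proposition~\ref{pr:nonought} (and hence of Theorem~\ref{mt:finitelygenerated}, since the specific $(w_0,Q)$ was an arbitrary symplectic structure up to relabeling of $X^{\pm1}$).
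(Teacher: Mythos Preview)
Your proof of the proposition is correct and is essentially the same argument as the paper's: the paper invokes the second isomorphism theorem to show that $(\AAGo)_{([w_0],Q)}$ has finite index in $(\AAG)_{([w_0],Q)}$, which is exactly your injection $H/(H\cap\AAGo)\hookrightarrow \AAG/\AAGo$, and then appeals to Theorem~\ref{th:almostthere}.

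One correction to your closing parenthetical: Proposition~\ref{pr:nonought} does \emph{not} by itself finish Theorem~\ref{mt:finitelygenerated}. The group $(\AAG)_{([w_0],Q)}$ is the stabilizer of the \emph{conjugacy class} $[w_0]$ together with $Q$, whereas $\Mod(\Gamma,w_0,Q)=(\AAG)_{(w_0,Q)}$ is the stabilizer of the \emph{element} $w_0$ together with $Q$. The paper handles this distinction in a separate final step, using the exact sequence
\[
Z_{A_\Gamma}(w_0)\to\Mod(\Gamma,w_0,Q)\to(\OAG)_{([w_0],Q)}\to 0
\]
and Servatius's centralizer theorem to conclude that $Z_{A_\Gamma}(w_0)$ is finitely generated. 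So your parenthetical remark should be dropped.
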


\begin{proof}
Recall that $\AAGo$ is a finite-index normal subgroup of $\AAG$.
Then $\AAGo$ is also finite-index and normal in $\langle \AAGo,(\AAG)_{([w_0],Q)}\rangle$.
By the classical second isomorphism theorem, we have:
\begin{equation*}\begin{split}
\langle &\AAGo,(\AAG)_{([w_0],Q)}\rangle/\AAGo\\
&\cong (\AAG)_{([w_0],Q)}/\big((\AAG)_{([w_0],Q)}\cap\AAGo\big)
\end{split}
\end{equation*}
But $(\AAG)_{([w_0],Q)}\cap\AAGo=(\AAGo)_{([w_0],Q)}$, so $(\AAGo)_{([w_0],Q)}$ is finite-index in $(\AAG)_{([w_0],Q)}$.
So we are done by Theorem~\ref{th:almostthere}.
\end{proof}

\begin{proof}[Final step in the proof of Theorem~\ref{mt:finitelygenerated}]
Let $Z_{A_\Gamma}(w_0)$ denote the centralizer of $w_0$ in $A_\Gamma$.
Consider the following sequence of maps, which we will show to be exact:
\[Z_{A_\Gamma}(w_0)\to\Mod(\Gamma,w_0,Q)\stackrel{\pi}{\to}(\OAG)_{([w_0],Q)}\to 0\]
Here the first map is the map sending an element to its corresponding inner automorphism.

If $[\alpha]\in(\OAG)_{([w_0],Q)}$ and $\alpha\in\AAG$ is a lift of $\alpha$, then $\alpha$ sends $w_0$ to a conjugate $u^{-1}w_0u$.
If we compose $\alpha$ with the inner automorphism given by conjugation by $u^{-1}$, we get an automorphism in $\Mod(\Gamma,w_0,Q)$ that projects to $[\alpha]$.
This explains the surjectivity of $\pi$.

If $\alpha$ is in the kernel of $\pi$, then it is the inner automorphism $c_u$ for some $u\in A_\Gamma$.
Of course, $c_u\in\Mod(\Gamma,w_0,Q)$ if and only if $c_u\in(\AAG)_{w_0}$, which is true if and only if $u\in Z_{A_\Gamma}(w_0)$, proving the exactness of this sequence.

From Proposition~\ref{pr:nonought}, the group $(\OAG)_{([w_0],Q)}$ is finitely generated.
Servatius's centralizer theorem from~\cite{se} completely describes the centralizers of elements in $A_\Gamma$; in particular, it tells us that $Z_{A_\Gamma}(w_0)$ is finitely generated.
Since $\Mod(\Gamma,w_0,Q)$ surjects onto a finitely generated group with finitely generated kernel, it is finitely generated.
\end{proof}

\section{Closing Remarks}\label{se:conclude}
The work in this paper opens the way for further study of mapping class groups over graphs.
First of all, it would be interesting to recover Definition~\ref{de:sympstruct} by means of a geometric construction.
In the extreme cases, $\Sp(2g,\Z)$ can be seen as the linear automorphisms of the torus $\mathbb{T}^{2g}$ that preserve a standard symplectic differential form, and $\Mod_{g,1}$ can be seen as the homotopy group of self-homotopy-equivalences of a graph that preserve some additional combinatorial structure called a ``fat graph" structure (see Penner~\cite{penner}).
It is worth noting that $\mathbb{T}^{2g}$ and certain graphs are examples of \emph{Salvetti complexes}.
The Salvetti complex $S_\Gamma$ is a finite cubical complex that forms a natural $K(A_\Gamma,1)$ space (see Definition~2.6 in Charney~\cite{char}).
If we take the monoid of self-homotopy-equivalences of $S_\Gamma$ and take a quotient by considering maps equivalent if they are homotopic, we get a group.
Call this group $G$. 
Note that $G\cong \AAG$.
This brings us to the following problem:
\begin{problem}\label{prob:geomdef}
Produce a structure on $S_\Gamma$ and a corresponding symplectic structure $(w,Q)$ on $A_\Gamma$ such that the subgroup of $G$ of elements represented by maps fixing this structure is naturally isomorphic to $\Mod(\Gamma,w,Q)$.
\end{problem}
By a structure on $S_\Gamma$, I mean some extra combinatorial data, or some extra differential data, or some combination of the two.

Theorem~\ref{mt:finitelygenerated} could be a starting point for future homological finiteness results about $\Mod(\Gamma,w,Q)$.
This conjecture could be a possible next step.
\begin{conjecture}\label{con:finpres}
For every graph $\Gamma$ with a symplectic structure $(w,Q)$ on $A_\Gamma$, the group $\Mod(\Gamma,w,Q)$ is finitely presented.
\end{conjecture}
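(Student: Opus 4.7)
The plan is to parallel the structure of the proof of Theorem~\ref{mt:finitelygenerated}, upgrading each step from finite generation to finite presentation. Both extremal cases are classically finitely presented: $\Sp(2g,\Z)$ via Behr's presentation by elementary symplectic transvections, and $\Mod_{g,1}$ via Wajnryb's presentation (or, closer to our methods, McCool's Whitehead-style presentation of $(\Aut F_{2g})_{[w_0]}$). The goal is to build a presentation of $\Mod(\Gamma,w,Q)$ that specializes to each of these at the extremes.

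The first step would be to obtain a finite presentation of $(\AAGo)_{[w_0]}$ by constructing a finite 2-complex $\widetilde\Delta$ extending the Whitehead graph $\Delta$ of Definition~\ref{de:whiteheadg}, in the spirit of McCool's 2-complex for free groups. The 2-cells would come from (a) commutation of Whitehead automorphisms with compatible supports; (b) the Whitehead-algebra identities catalogued in Day~\cite{fpraag}, in particular the sorting substitutions underlying Theorem~\ref{th:threeparts}; and (c) the relations expressing any element of $\langle\Omega\rangle$ in the normal form of Proposition~\ref{pr:1ofeach}. Since $\Omega$ and $\Delta$ are both finite, $\widetilde\Delta$ is finite; a refined peak reduction, adapting Theorem~\ref{th:prw0}, would be used to show $\pi_1(\widetilde\Delta,[w_0])\to(\AAGo)_{[w_0]}$ is an isomorphism.

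The second step would be to intersect with the stabilizer of $Q$. By Proposition~\ref{pr:1ofeach}, every element of $(\AAGo)_{[w_0]}$ factors as one element of $\langle S_e\cup S_k\rangle$ times one of $\langle S_i\rangle$, and the first factor preserves $Q$ automatically by Lemma~\ref{le:SkeQ}. The $Q$-preserving subgroup of $\langle S_i\rangle$ is, by Theorem~\ref{th:domspred}, generated by $S_Q$ and embeds into a finite extension of $\Sp(|\supp Q|,\Z)$, hence is finitely presented; Proposition~\ref{pr:KZisker} supplies finitely many kernel relations, and a Reidemeister--Schreier-type argument would assemble these pieces into a finite presentation of $(\AAGo)_{([w_0],Q)}$ and thereby of the finite extension $(\AAG)_{([w_0],Q)}$. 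The short exact sequence
\[ 1 \to Z_{A_\Gamma}(w_0)/\langle w_0\rangle \to \Mod(\Gamma,w_0,Q) \to (\OAG)_{([w_0],Q)} \to 1 \]
from the final step of the proof of Theorem~\ref{mt:finitelygenerated} would then finish the argument, since $Z_{A_\Gamma}(w_0)$ is finitely presented by Servatius's centralizer theorem and finite presentation is closed under extensions with finitely presented kernel and quotient.

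The main obstacle is step one. The analogous question for the free group --- whether $\mathrm{IA}_n$ is finitely presented --- is a well-known difficult open problem in general, so a direct lift of Theorem~\ref{mt:magnusthm} to finite presentation cannot be expected to go through easily. What makes the conjecture nonetheless plausible is that peak reducibility with respect to $[w_0]$ drastically constrains the relevant part of $\IAut A_\Gamma$: by Lemma~\ref{le:specialsrform} the short-range component of any $[w_0]$-fixing automorphism is essentially $w_0$-irrelevant, and the kernel generators $\tau_{[x,y],c}$ and $c_{x,\{c\}}$ satisfy finitely many Jacobi-type identities recorded in Sublemmas~\ref{sl:conjidsconj} and~\ref{sl:conjidstrans}. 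Encoding these local identities as a finite set of defining 2-cells in $\widetilde\Delta$, so that the resulting presentation of $\Mod(\Gamma,w,Q)$ avoids the infinite-relations pathology typical of the kernel of the homology representation, is the heart of the problem and the most likely place for the argument to break down.
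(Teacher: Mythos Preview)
This statement is a \emph{conjecture}, not a theorem: the paper does not prove it. In Section~\ref{se:conclude} the author explicitly poses it as an open problem, remarking only that McCool's combinatorial methods for $\Mod_{g,1}$ ``could potentially be extended'' to prove it. There is therefore no proof in the paper to compare your proposal against.

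Your proposal is not a proof either, and to your credit you say so: you identify the construction of a McCool-style $2$-complex $\widetilde\Delta$ with $\pi_1(\widetilde\Delta,[w_0])\cong(\AAGo)_{[w_0]}$ as the crux, and you note that this step has no known analogue even in well-studied special cases. That diagnosis is accurate. What you have written is a plausible research outline in the spirit the author suggests, but the key assertion---that finitely many $2$-cells coming from the Whitehead-algebra identities of~\cite{fpraag} suffice to kill $\pi_2$ of the relevant complex---is exactly the content of the conjecture and is not established by anything you cite. One further caution: your short exact sequence at the end is not quite the one in the paper (the kernel there is the image of $Z_{A_\Gamma}(w_0)$ in $\AAG$, not $Z_{A_\Gamma}(w_0)/\langle w_0\rangle$), though this does not affect the finite-presentation logic since both candidates are finitely presented.
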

There are combinatorial methods to show that $\Mod_{g,1}$ is finitely presented (see McCool~\cite{mccool}) which could potentially be extended to prove Conjecture~\ref{con:finpres}.

In the extreme cases, it is known that both $\Mod_{g,1}$ and $\Sp(2g,\Z)$ contain finite index subgroups with finite $K(\pi,1)$ complexes.
This implies that both groups are of type \emph{VFL}, which is a strong homological finiteness condition (see Brown~\cite{brown}, chapter VIII.11).
This leads us to the following conjecture.
\begin{conjecture}\label{con:vfcx}
For every graph $\Gamma$ with symplectic structure $(w,Q)$ on $A_\Gamma$, the group $\Mod(\Gamma,w,Q)$ has a finite-index subgroup $G$ with a finite $K(G,1)$ complex.
\end{conjecture}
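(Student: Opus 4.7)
The plan is to combine the exact sequence
\[1\longrightarrow \ker\pi \longrightarrow \Mod(\Gamma,w_0,Q) \stackrel{\pi}{\longrightarrow} (\OAG)_{([w_0],Q)}\longrightarrow 1\]
from the final step of the previous proof with a geometric construction for the outer quotient.  Here $\ker\pi$ is the image of $Z_{A_\Gamma}(w_0)$ under the inner-automorphism map, i.e.\ $Z_{A_\Gamma}(w_0)/Z(A_\Gamma)$, and Servatius's centralizer and center theorems make this a small explicit RAAG-like group, hence one with a finite $K(\pi,1)$.  The usual total-space-of-fibration construction then reduces the conjecture to producing a torsion-free finite-index subgroup of $(\OAG)_{([w_0],Q)}$ with a finite classifying space.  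Since $\OAG$ is virtually torsion-free (combining linearity of $\OAG/\IAut A_\Gamma \hookrightarrow \GL(H_\Gamma)$ with Theorem~\ref{mt:magnusthm} and residual finiteness of the relevant quotients), such a subgroup exists as soon as we can build a suitable contractible space.

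The candidate $X(\Gamma,w,Q)$ I would construct is a hybrid interpolating the two classical models.  When $\Gamma$ is edgeless, $X$ should specialize to a known cocompact model for $\Mod_{g,1}$ such as the spine of Harer's arc complex or the Penner fat-graph complex; when $\Gamma$ is complete, $X$ should specialize to a cocompact spine of the Siegel upper half-space.  For general $\Gamma$, I would look inside an outer-space-type complex for $\AAG$ of the kind developed by Charney--Stambaugh--Vogtmann and Bux--Charney--Vogtmann, take the sublocus of markings compatible with $(w,Q)$, and decorate its ``$w$-part'' with Penner-style fat-graph data and its ``$Q$-part'' with a distinguished basis of $\supp Q$.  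The long-range fold moves underlying the $\Omega_\ell$-part of peak reduction (Theorem~\ref{th:threeparts} and Theorem~\ref{th:prw0}) and the $Q$-transvection moves from Theorem~\ref{th:domspred} should supply the local combinatorial structure of $X$.

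The main obstacle will be proving contractibility and cocompactness of $X(\Gamma,w,Q)$, especially accounting for the twisted semidirect-product factors displayed by Examples~\ref{ex:join} and~\ref{ex:disju}.  In those examples, $\Mod(\Gamma,w,Q)$ genuinely contains RAAG factors twisting a symplectic factor by a mapping-class factor, so a naive product of a Teichm\"uller piece with a Siegel piece cannot work: $X$ must include tree-like directions to absorb these twists.  A positive answer to Problem~\ref{prob:geomdef}, giving a geometric symplectic structure on the Salvetti complex $S_\Gamma$, would almost certainly supply the right $X$; by analogy with marked hyperbolic structures, the moduli space of marked $(w,Q)$-structures should be canonical, and its contractibility should then follow from a deformation retraction interleaving fold-type arguments on the long-range side with the Iwasawa decomposition on the short-range side.
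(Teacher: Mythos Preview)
The statement you are addressing is Conjecture~\ref{con:vfcx}, which the paper explicitly leaves open in Section~\ref{se:conclude}; there is no proof in the paper to compare against.  What you have written is not a proof either, and you essentially acknowledge this: you describe a candidate space $X(\Gamma,w,Q)$ only by analogy and list contractibility and cocompactness as ``the main obstacle'' without supplying any argument for either.  That is a research outline, not a proof.

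A few of the steps you do assert have gaps even as stated.  Your argument for virtual torsion-freeness of $\OAG$ is incomplete: knowing that $\OAG/\IAut A_\Gamma$ embeds in $\GL(H_\Gamma)$ and that $\IAut A_\Gamma$ is finitely generated (Theorem~\ref{mt:magnusthm}) does not by itself yield virtual torsion-freeness of $\OAG$; you would additionally need $\IAut A_\Gamma$ to be torsion-free (itself conjectural in this paper) and some splitting or residual argument to assemble a global torsion-free finite-index subgroup.  Likewise, the reduction via the exact sequence requires not just that $\ker\pi$ have a finite $K(\pi,1)$ but that the extension can be realized by a fibration of finite complexes, which needs the quotient to have type $F_\infty$ first---precisely what is at issue.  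Finally, invoking a Charney--Stambaugh--Vogtmann outer space for general $\AAG$ and then ``taking the sublocus compatible with $(w,Q)$'' is exactly the step the paper flags as the content of Problem~\ref{prob:geomdef}; you have relocated the difficulty rather than resolved it.
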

It seems unlikely that Conjecture~\ref{con:vfcx} could be proven by purely combinatorial methods, but given a solution to Problem~\ref{prob:geomdef}, it is conceivable that one could recover such a $K(G,1)$ complex as a kind of moduli space of Salvetti complexes with symplectic structures.
A related problem would then be to find bounds on the virtual cohomological dimension of $\Mod(\Gamma,w,Q)$.
Charney--Crisp--Vogtmann~\cite{ccv} and Charney--Vogtmann~\cite{cv} have already made much progress on the parallel problem for $\AAG$.

Theorem~\ref{mt:magnusthm} could be a starting point for work on the homological properties of $\IAut A_\Gamma$.
In the usual way (as with $\Mod_{g,1}$ or with $\Aut F_n$), the action of $\AAG$ on the 2-step nilpotent truncation of $A_\Gamma$ defines an $\AAG$--equivariant homomorphism (a \emph{Johnson homomorphism}) from $\IAut A_\Gamma$ to an abelian group.
We can then ask the following question.
\begin{question}
For arbitrary $\Gamma$, is the image of the Johnson homomorphism on $\IAut A_\Gamma$ equal to the abelianization of $\IAut A_\Gamma$?
\end{question}
This question was answered in the affirmative for $IA_n$, independently by Cohen--Pakianathan, by Farb, and by Kawazumi~\cite{kawazumi} (see Theorem~1.1 of Pettet~\cite{pettet}).

The following conjecture is linked to Charney--Vogtmann~\cite{cv}.
\begin{conjecture}
For every graph $\Gamma$, the group $\IAut A_\Gamma$ is torsion-free and there is a finite-dimensional $K(\IAut A_\Gamma,1)$ complex.
\end{conjecture}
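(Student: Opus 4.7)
The plan is to attack the two parts of the conjecture separately, drawing both on the structure theory for $\AAG$ developed earlier in this paper and on the outer-space machinery for right-angled Artin groups of Charney--Vogtmann referenced in the Closing Remarks. I would begin with torsion-freeness, since it is the essential input to the second statement.

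For the torsion-free claim, I want to show that every nontrivial finite-order $\alpha\in\AAG$ has nontrivial image in $\Aut H_\Gamma$, or equivalently that $\IAut A_\Gamma$ contains no torsion. The extreme cases are known: for $\Gamma$ edgeless, finite subgroups of $\Aut F_n$ inject into $\GL(n,\Z)$ (so $IA_n$ is torsion-free); for $\Gamma$ complete the statement is vacuous since $\IAut A_\Gamma$ is trivial. To interpolate, I would start by factoring a finite-order $\alpha\in\AAG$ as $\beta\gamma$ with $\beta\in\langle\Omega_s\rangle$ and $\gamma\in\langle\Omega_\ell\rangle$ using Theorem~\ref{th:threeparts}. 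Part~(\ref{it:srinjs}) of that theorem gives injectivity of the homology representation on $\langle\Omega_s\rangle$, so $\alpha\in\IAut A_\Gamma$ forces $\beta$ to be trivial after suitable bookkeeping on the ambiguity of this factorization, reducing the torsion problem to long-range automorphisms together with the contribution of the graph-automorphism quotient $\AAG/\AAGo$. The remaining finite-order elements should, after conjugation, be concentrated in the subgroup generated by inversions and graph automorphisms of $\Gamma$, both of which act faithfully on $H_\Gamma$ by direct inspection.

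For the finite-dimensional $K(\IAut A_\Gamma,1)$, I would use the contractible finite-dimensional spine $X$ of the $\AAG$-outer space on which $\OAG$ acts cocompactly with finite cell stabilizers. Pulling this action back along $\AAG\to\OAG$ yields an action of $\AAG$ on $X$ whose cell stabilizers are extensions of finite groups by inner-automorphism subgroups. Restricting to $\IAut A_\Gamma$, torsion-freeness forces trivial intersection with every finite stabilizer, and a separate check using Servatius's centralizer theorem (already invoked in the final step of Theorem~\ref{mt:finitelygenerated}) handles the inner-automorphism parts of stabilizers, since nontrivial conjugations always act nontrivially on $H_\Gamma$ when the conjugating element is nontrivial modulo the center. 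Hence $\IAut A_\Gamma$ acts freely on $X$ and the quotient $X/\IAut A_\Gamma$ is a finite-dimensional classifying space of the required form.

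The main obstacle will be the torsion-free portion, specifically the step of eliminating the long-range and graph-automorphism contributions in a uniform way. It is tempting to hope that every finite subgroup of $\AAG$ is conjugate to a ``standard'' subgroup supported on graph symmetries and inversions, but this Nielsen-realization-style statement has not been established at this level of generality and will likely require a serious input, for example a proof that every finite subgroup of $\AAG$ fixes a point of the Charney--Vogtmann spine together with a description of the linearized action at that fixed point that identifies it with the homology representation. Once torsion-freeness is secured, the $K(\pi,1)$ step should be comparatively routine, amounting to checking freeness of the $\IAut A_\Gamma$-action on an already-constructed contractible complex.
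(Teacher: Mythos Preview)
The statement you are attempting to prove is a \emph{conjecture} in the paper, not a theorem: it appears in Section~\ref{se:conclude} among the open problems, and the paper offers no proof whatsoever. There is therefore nothing to compare your proposal against. Your task description is mismatched with the paper's content.

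That said, your proposal itself has a genuine gap in the torsion-freeness argument. The factorization $\alpha=\beta\gamma$ from Theorem~\ref{th:threeparts} does not behave the way you need: injectivity of the homology representation on $\langle\Omega_s\rangle$ tells you that $\beta$ is determined by its image in $\Aut H_\Gamma$, but if $\alpha\in\IAut A_\Gamma$ this only says $\beta_*=\gamma_*^{-1}$, not that $\beta$ is trivial. There is no canonical or unique splitting here, and ``suitable bookkeeping on the ambiguity'' is doing all the work without any indication of how. Moreover, you yourself identify the real obstruction later: reducing to inversions and graph automorphisms requires a Nielsen-realization statement (every finite subgroup of $\AAG$ fixes a point in a suitable spine, with linearized action identified with the homology representation) that was not available when the paper was written and that you do not supply. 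Your proposal is honest about this being the ``main obstacle,'' but that means the torsion-free half is a plan rather than a proof.

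For the $K(\pi,1)$ half, your argument is conditional on torsion-freeness and on the existence of a contractible finite-dimensional spine for $\OAG$ with finite stabilizers. The paper explicitly links the conjecture to Charney--Vogtmann~\cite{cv}, which at the time did not yet furnish such a space for arbitrary $\Gamma$; you are assuming as input the very machinery whose development the conjecture anticipates. So while the outline is reasonable as a strategy, it does not constitute a proof of the conjecture.
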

A related problem is to bound the dimension of such a complex, as Bestvina--Bux--Margalit~\cite{bbm} did in the case of $IA_n$.
We do not expect such a complex to have finitely many cells in each dimension, but only that such a complex would be finite-dimensional.

\bibliographystyle{amsplain}
\bibliography{ssraag}

\noindent
Dept. of Mathematics, California Institute of Technology\\
Pasadena, Ca 91125\\
E-mail: {\tt mattday@caltech.edu}
\medskip

\end{document}